\newtheorem{X}{X}[section]
\newtheorem{corollary}[X]{Corollary}
\newtheorem{lemma}[X]{Lemma}
\newtheorem{proposition}[X]{Proposition}
\newtheorem{theorem}[X]{Theorem}
\newtheorem{rc1}[X]{Rationality Conjecture}
\newtheorem{rc2}[X]{Weak Rationality Conjecture}
\newtheorem{aside}[X]{Aside}
\newtheorem{definition}[X]{Definition}
\newtheorem{example}[X]{Example}
\newtheorem{remark}[X]{Remark}
\newtheorem{plain}[X]{}
\theoremstyle{nonumberplain}
\newtheorem{proof}{Proof.}
\newtheorem{pf}{Proof}
\let\cite=\citealt
\newcommand\bquote{\begin{quote}}
\newcommand\equote{\end{quote}}
\newcommand\bcomment{}
\newcommand\bsmall{\begin{small}}
\newcommand\esmall{\end{small}}
\newcommand\bfootnotesize{\begin{footnotesize}}
\newcommand\efootnotesize{\end{footnotesize}}
\newcommand{\textnf}{\textnormal}
\renewcommand{\bar}{\overline}
\renewcommand{\Gamma}{\varGamma}
\renewcommand{\Pi}{\varPi}
\renewcommand{\Sigma}{\varSigma}
\def\1{{1\mkern-7mu1}}
\DeclareMathOperator{\Aut}{Aut}
\DeclareMathOperator{\End}{End}
\DeclareMathOperator{\Gal}{Gal}
\DeclareMathOperator{\GL}{GL}
\DeclareMathOperator{\Hom}{Hom}
\DeclareMathOperator{\id}{id}
\DeclareMathOperator{\Ker}{Ker}
\DeclareMathOperator{\MT}{MT}
\DeclareMathOperator{\SL}{SL}
\DeclareMathOperator{\Tr}{Tr}
\newcommand{\CM}{\mathsf{CM}}
\newcommand{\LCM}{\mathsf{LCM}}
\newcommand{\LMot}{\mathsf{LMot}}
\newcommand{\Mot}{\mathsf{Mot}}
\newcommand{\Vc}{\mathsf{Vec}}
\begin{document}

\title{Rational Tate classes}
\author{J.S. Milne}
\date{April 29, 2008}
\maketitle

\begin{abstract}
\noindent In despair, as Deligne (2000)\nocite{deligne2000} put it, of proving
the Hodge and Tate conjectures, we can try to find substitutes. For abelian
varieties in characteristic zero, Deligne (1982) constructed a theory of Hodge
classes having many of the properties that the algebraic classes would have if
the Hodge conjecture were known. In this article I investigate whether there
exists a theory of \textquotedblleft rational Tate classes\textquotedblright%
\ on varieties over finite fields having the properties that the algebraic
classes would have if the Hodge and Tate conjectures were known. In
particular, I prove that there exists at most one \textquotedblleft
good\textquotedblright\ such theory. \newline v1 July 20, 2007. First version
on the web. \newline v2 November 7, 2007. Completely rewritten; shortened the
title. \newline v3 April 29, 2008. Submitted version.

\end{abstract}
\tableofcontents

\renewcommand{\thefootnote}{\fnsymbol{footnote}} \footnotetext{MSC2000:
114C25; 14K15; 11G10} \renewcommand{\thefootnote}{\arabic{footnote}}

\subsection*{Introduction}

In the absence of any significant progress towards a proof of the Hodge or
Tate conjectures, we can instead try to attach to each smooth projective
variety $X$ a graded $\mathbb{Q}{}$-algebra of cohomology classes having the
properties that the algebraic classes would have if one of the conjectures
were true. When the ground field $k$ is algebraically closed of characteristic
zero, every embedding $\sigma\colon k\hookrightarrow\mathbb{C}{}$ gives a
candidate for this $\mathbb{Q}{}$-algebra, namely, the $\mathbb{Q}{}$-algebra
of Hodge classes on $\sigma X$. The problem is then to show that this
$\mathbb{Q}{}$-algebra is independent of the embedding. \citet{deligne1982}
proves this for abelian varieties.

When the ground field is algebraically closed of characteristic $p\neq0$ the
problem is different. To each smooth projective variety one can attach a
$\mathbb{Q}{}_{l}$-algebra of Tate classes for every prime $l$ (including
$p$), and the problem is then to find a canonical $\mathbb{Q}{}$-structure on
these $\mathbb{Q}{}_{l}$-algebras. The purpose of this article is to examine
this problem for varieties over an algebraic closure $\mathbb{F}{}$ of
$\mathbb{F}{}_{p}$.

First I write down a list of properties that these $\mathbb{Q}{}$-structures
should have (and would have if the Hodge and Tate conjectures were known in
the relevant cases) in order to be a \textquotedblleft good theory of rational
Tate classes\textquotedblright. Then I prove (in \S 3) that there exists at
most one such theory (meaning \textit{exactly} one theory) on any class of
varieties for which the Frobenius maps are semisimple, for example, for
abelian varieties. Next I prove that the existence of such a theory would have
many of the same consequences for motives that the aforementioned conjectures
have. In addition, we recover the theorem (\cite{milne1999lm}) that the Hodge
conjecture for CM abelian varieties over $\mathbb{C}{}$ implies the Tate
conjecture for abelian varieties over $\mathbb{F}{}$.

The $\mathbb{Q}{}$-algebra generated by the divisor classes on an abelian
variety $A$ over $\mathbb{F}{}$ is a partial $\mathbb{Q}{}$-structure on the
cohomology of $A$. The Hodge classes on any CM lift of $A$ provide a second
partial $\mathbb{Q}{}$-structure. The rationality conjecture in \S 4 predicts
that these two partial $\mathbb{Q}{}$-structures are compatible. This
conjecture implies the existence of a good theory of rational Tate classes on
abelian varieties over $\mathbb{F}{}$, and is implied by the Hodge conjecture
for CM abelian varieties. However, since it is trivially true for simple
ordinary abelian varieties, it should be easier to prove than either the Hodge
or Tate conjectures.

With these results, it is possible to divide the Tate conjecture over
$\mathbb{F}{}$ into two parts:

\begin{enumerate}
\item There exists a good theory of rational Tate classes for smooth
projective varieties over $\mathbb{F}{}$ (which will be unique if it exists).

\item Every rational Tate class is algebraic.
\end{enumerate}

As noted, the Hodge conjecture for CM abelian varieties over $\mathbb{C}{}$
implies the rationality conjecture for abelian varieties. However, in some
respects the rationality conjecture is stronger than the Tate conjecture for
abelian varieties over $\mathbb{F}{}$, since it implies the Hodge standard
conjecture for rational Tate classes whereas the Tate conjecture over
$\mathbb{F}{}$ does not imply the Hodge standard conjecture for algebraic
classes. It seems to me that the rationality conjecture is the minimum that
is necessary to obtain a full understanding of Shimura varieties over
finite fields and, in particular, to prove the conjecture of Langlands and
Rapoport (1987)\nocite{langlandsR1987}.

\subsection*{Conventions}

All algebraic varieties are smooth and projective. Complex conjugation on
$\mathbb{C}{}$ is denoted by $\iota$. The symbol $\mathbb{F}{}$ denotes an
algebraic closure of $\mathbb{F}{}_{p}$, and $\ell$ always denotes a prime
$\neq p$. On the other hand, $l$ \emph{is allowed to equal} $p$. The degree of
an algebra over a field is its dimension as a vector space. We say that an
extension $K$ of a field $k$ \emph{splits} a semisimple $k{}$-algebra $E$ if
$E\otimes_{k}K$ is isomorphic to a product of matrix algebras over $K$. The
symbol $\simeq$ denotes a canonical (or a specifically given) isomorphism.

For a variety $X$, $H^{\ast}(X)=\bigoplus\nolimits_{i}H^{i}(X)$ and $H^{2\ast
}(X)(\ast)=\bigoplus\nolimits_{i}H^{2i}(X)(i)$; both are graded algebras over
the coefficient field of the cohomology.

Let $X$ be a variety over $\mathbb{F}{}$. A regular map $\pi\colon
X\rightarrow X$ is a \emph{Frobenius map} if it arises by extension of scalars
from the $q$-power Frobenius map on a model of $X$ over some subfield
$\mathbb{F}{}_{q}$ of $\mathbb{F}{}$. We let $\pi_{X}$ denote the family of
Frobenius maps of $X$. For $\ell\neq p$, $H_{\ell}^{\ast}(X)$ denotes the
\'{e}tale cohomology of $X$ with coefficients in $\mathbb{Q}{}_{\ell}$. An
element of $H_{\ell}^{2\ast}(X)(\ast)$ is an $\ell$\emph{-adic Tate class} if
it is fixed by some Frobenius map. The $\ell$-adic Tate classes on $X$ form a
graded $\mathbb{Q}{}_{\ell}$-subalgebra $\mathcal{T}_{\ell}(X)$ of $H_{\ell
}^{2\ast}(X)(\ast)$ of finite degree.

For a perfect field $k$ of characteristic $p$, $W(k)$ denotes the ring of Witt
vectors with coefficients in $k$, and $\sigma$ denotes the automorphism of
$W(k)$ that acts as $x\mapsto x^{p}$ on the residue field $k$. For a variety
$X$ over $k$, $H_{p}^{\ast}(X)$ denotes the crystalline cohomology of $X$ with
coefficients in the field of fractions $B(k)$ of $W(k)$. It is a graded
$B(k)$-algebra of finite degree with a $\sigma$-linear Frobenius map $F$. For
a variety $X$ over $\mathbb{F}{}$,
\[
\mathcal{T}_{p}^{r}(X)=\bigcup\nolimits_{X_{1}/\mathbb{F}{}_{q}}\left\{  a\in
H_{p}^{2r}(X_{1})(r)\mid Fa=a\right\}
\]
(union over the models $X_{1}/\mathbb{F}{}_{q}$ of $X$ over finite subfields
of $\mathbb{F}{}$). Then $\mathcal{T}{}_{p}(X)\overset{\textup{{\tiny def}}%
}{=}\bigoplus_{r}\mathcal{T}{}_{p}^{r}(X)$ is a graded $\mathbb{Q}{}_{p}%
$-algebra of finite degree whose elements are called the $p$\emph{-adic Tate
classes} on $X$.

The classes of the algebraic cycles on $X$ lie in $\mathcal{T}{}_{l}^{\ast
}(X)$, and the \emph{Tate conjecture for }$l$ states that their $\mathbb{Q}%
{}_{l}$-span is $\mathcal{T}{}_{l}^{\ast}(X)$.

\section{Preliminaries}

\subsection{Some linear algebra}

Throughout this subsection, $Q$ is a field.

\begin{plain}
\label{r5d}Let $V$ be a finite dimensional vector space over a field $Q$. Let
$\pi$ be an endomorphism of $V$, and let $V^{\pi}$ be the subspace of of $V$
of elements fixed by $\pi$. Then $\dim_{Q}V^{\pi}$ is at most the multiplicity
of $1$ as a root of the characteristic polynomial of $\pi$, and equals the
multiplicity if and only if $1$ is not a multiple root of the minimum
polynomial of $\pi$.
\end{plain}

\begin{plain}
\label{r5b}Let $V$ and $V^{\prime}$ be vector spaces over a field $Q$ in
duality by a pairing $\langle\,\,,\,\,\rangle\colon V\times V^{\prime
}\rightarrow Q$, and let $\pi$ and $\pi^{\prime}$ be endomorphisms of $V$ and
$V^{\prime}$ such that $\langle\pi v,\pi^{\prime}v^{\prime}\rangle=\langle
v,v^{\prime}\rangle$ for all $v\in V$ and $v^{\prime}\in V^{\prime}$. The
pairing%
\begin{equation}
v,v\mapsto\langle v,v^{\prime}\rangle\colon V^{\pi}\times V^{\prime\pi
^{\prime}}\rightarrow Q \label{e15}%
\end{equation}
is degenerate if and only if $1$ is a multiple root of the minimum polynomial
of $\pi$ on $V$.
\end{plain}

To see this, note that if $1$ is a multiple root of the minimum polynomial of
$\pi$, then there exists a nonzero $v\in V^{\pi}$ of the form $(\pi-1)w$ for
some $w\in V$, and%
\[
\langle v,v^{\prime}\rangle=\langle(\pi-1)w,v^{\prime}\rangle=\langle\pi
w,v^{\prime}\rangle-\langle w,v^{\prime}\rangle=\langle\pi w,\pi^{\prime
}v^{\prime}\rangle-\langle w,v^{\prime}\rangle=0
\]
for all $v^{\prime}\in V^{\prime\pi^{\prime}}$. Conversely, if $1$ is not a
multiple root of the minimum polynomial of $\pi$, then the same is true of
$\pi^{\prime}$ and the pairing (\ref{e15}) is obviously nondegenerate.

\begin{plain}
\label{r5e}Recall that an \emph{isocrystal} over perfect field $k$ is a
finite-dimensional $B(k)$-vector space $V$ together with a $\sigma$-linear
isomorphism $F\colon V\rightarrow V$. Let $k=\mathbb{F}_{p^{a}}$. Then
$\pi\overset{\textup{{\tiny def}}}{=}F^{a}$ is $B(k)$-linear. The following
statements are equivalent:

\begin{enumerate}
\item the isocrystal $(V,F)$ is semisimple;

\item the $\mathbb{Q}{}_{p}$-algebra $\End(V,F)$ is semisimple;

\item $\pi$ is a semisimple endomorphism of the $B(k)$-vector space $V$.
\end{enumerate}

\noindent(See, for example, \cite{milne1994}, 2.10.)
\end{plain}

\begin{plain}
\label{r5f}Let $(V,F)$ be an isocrystal over $k=\mathbb{F}{}_{p^{a}}$, and let
$V^{F}=\{v\in F\mid Fv=v\}$. Then $V^{F}$ is a $\mathbb{Q}{}_{p}$-subspace of
$V^{\pi}$ and $B(k)\otimes_{\mathbb{Q}{}_{p}}V^{F}\overset{\simeq
}{\longrightarrow}V^{\pi}$.
\end{plain}

Certainly, $V^{F}$ is a $\mathbb{Q}_{p}{}$-subspace of $V^{\pi}$, and we have
to prove that it is a $\mathbb{Q}{}_{p}$-structure on it. Obviously this is
true for a direct sum of isocrystals if and only if it is for each summand.
Therefore, we may assume that $(V,F)$ is indecomposable. According to the
structure theory of modules over the skew polynomial ring $A\overset
{\text{{\tiny def}}}{=}B(k)[F]$ (\cite{jacobson1943}, Chapter 3), there exists
a smallest $r$ for which $V^{r}\approx A/cA$ with $c$ in the centre of $A$.
The centre of $A$ is $\mathbb{Q}{}_{p}[F^{a}]$, and in fact $c=m(F^{a})$ with
$m$ a power of an irreducible polynomial. One can identify $m$ with the
minimum polynomial for $F^{a}$ as a $\mathbb{Q}{}_{p}$-linear map on $V$.
After the above remark, we may replace $V$ with $V^{r}$, and so assume that
$V=A/(m(F^{a}))$. Clearly, $V^{F}=V^{\pi}=0$ unless $m(T)$ is a power of
$T-1$, in which case a direct calculation shows that $B(k)\otimes
_{\mathbb{Q}{}_{p}}V^{F}\simeq V^{\pi}$.

\begin{plain}
\label{r5g}Let $(V,F)$ and $(V^{\prime},F)$ be isocrystals over $k=\mathbb{F}%
{}_{q}$, and suppose that $V$ and $V^{\prime}$ are in duality by a pairing
$\langle\,\,,\,\,\rangle\colon V\times V^{\prime}\rightarrow B(k)$ such that
$\langle Fv,Fv^{\prime}\rangle=\langle v,v^{\prime}\rangle$ for all $v\in V$
and $v^{\prime}\in V^{\prime}$. Then $\langle V^{F},V^{F^{\prime}}%
\rangle\subset B(k)^{F}=\mathbb{Q}{}_{p}$, and pairing%
\begin{equation}
v,v\mapsto\langle v,v^{\prime}\rangle\colon V^{F}\times V^{\prime
F}\rightarrow\mathbb{Q}{}_{p} \label{e21}%
\end{equation}
is degenerate if and only if $1$ is a multiple root of the minimum polynomial
of $\pi$ on $V$.
\end{plain}

That $\langle V^{F},V^{F^{\prime}}\rangle\subset\mathbb{Q}{}_{p}$ is obvious.
Statement (\ref{r5b}) shows that the pairing $V^{\pi}\times V^{\prime
\pi^{\prime}}\rightarrow B(k)$ is degenerate if and only if $1$ is a multiple
root of the minimum polynomial of $\pi$ on $V^{\prime}$, and so this follows
immediately from (\ref{r5f}).\bigskip

Let $Q_{0}$ be a subfield of $Q$. Let $W$ and $W^{\prime}$ be finite
dimensional $Q$-vector spaces, and let $\langle\,\,,\,\,\rangle\colon W\times
W^{\prime}\rightarrow Q$ be a bilinear pairing. Let $R$ and $R^{\prime}$ be
finite dimensional $Q_{0}$-subspaces of $W$ and $W^{\prime}$ such that
$\langle R,R^{\prime}\rangle\subset Q_{0}$:%
\[
\renewcommand{\arraystretch}{1.3}%
\begin{array}
[c]{ccccc}%
W & \times & W^{\prime} & \rightarrow & Q\\
\rotatebox{90}{$\subset$} &  & \rotatebox{90}{$\subset$} &  &
\rotatebox{90}{$\subset$}\\
R & \times & R^{\prime} & \rightarrow & Q_{0}.
\end{array}
\]
Consider the following statements.

\begin{description}
\item[\textnf{T:}] The map $f\otimes r\mapsto fr\colon Q\otimes_{Q_{0}%
}R\rightarrow W$ is surjective.

\item[\textnf{I:}] The map $f\otimes r\mapsto fr\colon Q\otimes_{Q_{0}%
}R\rightarrow W$ is injective.

\item[\textnf{S:}] The pairing $\langle\,\,,\,\,\rangle\colon W\times
W^{\prime}\rightarrow Q$ is left nondegenerate.

\item[\textnf{E:}] The pairing $\langle\,\,,\,\,\rangle\colon R\times
R^{\prime}\rightarrow Q_{0}$ is left nondegenerate.
\end{description}

\noindent There are also primed versions of these statements, for example,
T$^{\prime}$ is the statement \textquotedblleft$QR^{\prime}=W^{\prime}%
$\textquotedblright. \noindent Let $N$ be the left kernel of the pairing
$R\times R^{\prime}\rightarrow Q_{0}$, and consider the diagram:%
\[
\begin{CD}
Q\otimes_{Q_{0}}R @>b>> W @>c>> (W^{\prime})^{\vee}\\
@VVaV@.@VVdV\\
Q\otimes_{Q_{0}}(R/N) @>f>\textrm{injective}> Q\otimes_{Q_{0}}\Hom_{Q_0}(R^{\prime},Q_{0}) @>e>{\simeq}> (Q\otimes
_{Q_{0}}R^{\prime})^{\vee}.
\end{CD}
\]
Here $\left(  -\right)  ^{\vee}=\Hom_{Q}(-,Q)$, $b$ is the map $f\otimes
r\mapsto fr$, and $d$ is the dual of the similar map. The remaining maps are obvious.

\begin{proposition}
\label{r5a}\textnf{(a)} $\Ker(a)\supset\Ker(b)$, with equality if and only if
E is true.

\textnf{(b)} If E is true, then so is I.

\textnf{(c)} If S and T$^{\prime}$ are true, then so is E.

\textnf{(d)} dim$_{Q_{0}}(R/N)\leq\dim W$, with equality if and only if T and
E are true.
\end{proposition}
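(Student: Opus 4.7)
The plan is to check that the diagram in the statement commutes, and then to read all four assertions off it, using only the obvious facts that $f$ is injective (it is the tensoring over $Q_{0}$ of the injection $R/N\hookrightarrow\Hom_{Q_{0}}(R',Q_{0})$ induced by the pairing, which is injective by the very definition of $N$) and that $e$ is an isomorphism (the standard identification of $Q\otimes_{Q_{0}}\Hom_{Q_{0}}(R',Q_{0})$ with $(Q\otimes_{Q_{0}}R')^{\vee}$). Commutativity itself is a direct computation: applied to $\sum f_{i}\otimes r_{i}$, both composites $d\circ c\circ b$ and $e\circ f\circ a$ produce the functional on $Q\otimes_{Q_{0}}R'$ sending $\sum g_{j}\otimes r_{j}'$ to $\sum_{i,j} f_{i}g_{j}\langle r_{i},r_{j}'\rangle$.

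For (a), commutativity and the injectivity of $e\circ f$ force $\Ker(b)\subset\Ker(e\circ f\circ a)=\Ker(a)=Q\otimes_{Q_{0}}N$. The reverse inclusion is equivalent to $b$ vanishing on $Q\otimes_{Q_{0}}N$, i.e.\ to the inclusion $N\hookrightarrow W$ being zero, i.e.\ to $N=0$, which is precisely E. Part (b) is then immediate: E gives $N=0$, so $a$ is an isomorphism and $\Ker(b)=0$, which is I.

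Part (c) exploits the dual side of the diagram. Property S is exactly the injectivity of $c$, and T' (the surjectivity of $Q\otimes_{Q_{0}}R'\to W'$) dualises to injectivity of $d$. Hence $d\circ c$ is injective, so $\Ker(b)=\Ker(d\circ c\circ b)=\Ker(e\circ f\circ a)=\Ker(a)$, and (a) yields E. For (d), the central estimate is
\[
\dim_{Q_{0}}(R/N)=\dim_{Q}\im(e\circ f\circ a)=\dim_{Q}\im(d\circ c\circ b)\leq\dim_{Q}\im(b)\leq\dim_{Q}W.
\]
The right-hand inequality is an equality exactly when $b$ is surjective, which is T; via rank-nullity the left-hand inequality is an equality exactly when $\Ker(b)=\Ker(a)$, which by (a) is E.

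I do not expect a serious obstacle: the entire proposition unravels from the single observation that the given diagram commutes, after which each clause reduces to a kernel, image, or duality manipulation. The only subtlety worth flagging is the step in (a) that $\Ker(a)=\Ker(b)\iff N=0$, which relies on the trivial remark that $N$, sitting inside the $Q$-vector space $W$, has zero $Q$-span iff $N=0$.
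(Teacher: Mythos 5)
Your proof is correct and follows essentially the same route as the paper: both arguments run entirely on the displayed diagram, using the injectivity of $e\circ f$ for (a) and (b), the injectivity of $c$ and $d$ (from S and T$'$) for (c), and a dimension count comparing $\Ker(a)$ with $\Ker(b)$ for (d); your rank--nullity phrasing of (d) is just the paper's surjection $(Q\otimes_{Q_{0}}R)/\Ker(b)\twoheadrightarrow(Q\otimes_{Q_{0}}R)/\Ker(a)$ in different words. The only addition is your explicit verification that the diagram commutes, which the paper leaves implicit.
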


\begin{pf}
[following \cite{tate1994}, \S 2](a) We have $\Ker(a)\supset\Ker(b)$ because
$e\circ f$ is injective. Moreover, $b(\Ker(a))=Q\cdot N$, and so
$\Ker(a)\subset\Ker(b)$ if and only if $Q\cdot N=0$, i.e., $N=0$.

(b) We have E$\iff\Ker(a)=0\overset{\text{(a)}}{\implies}\Ker(b)=0\iff$I.

(c) If S and T$^{\prime}$ are true, then $c$ and $d$ are injective, and so
$\Ker(a)=\Ker(b)$.

(d) As $\Ker(b)\subset\Ker(a)$, we have a surjection%
\[
Q\cdot R\simeq(Q\otimes_{Q_{0}}R)/\Ker(b)\twoheadrightarrow(Q\otimes_{Q_{0}%
}R)/\Ker(a)\simeq Q\otimes_{Q_{0}}(R/N),
\]
and so $\dim_{Q}(Q\cdot R)\geq\dim_{Q_{0}}(R/N)$, with equality if and only if
$\Ker(a)=\Ker(b)$, i.e., $E$ holds. As $\dim_{Q{}}(Q\cdot R)\leq\dim W$, with
equality if and only if $T,$ this implies statement (d).
\end{pf}

Recall that the cup-product makes $H_{l}^{2\ast}(X)(\ast)$ into a graded
$\mathbb{Q}{}_{l}$-algebra (or $B(k)$-algebra if $l=p$), and that Poincar\'{e}
duality says that the product pairings%
\[
H_{l}^{2r}\left(  X\right)  (r)\times H_{l}^{2d-2r}(X)(d-r)\rightarrow
H_{l}^{2d}(X)(d)\overset{\langle\cdot\rangle}{\simeq}\mathbb{Q}{}_{l}%
\text{,}\quad d=\dim X,
\]
are nondegenerate for connected varieties $X$.

Let $X$ be a variety over $\mathbb{F}{}$. In this section and the next, we
let\footnote{For generalities on cohomology with ad\`{e}lic coefficients, see
\cite{milneR2004}, \S 2.}
\[
H_{\mathbb{A}{}}^{\ast}(X)=\left(  (\varprojlim\nolimits_{p\nmid m}H^{\ast
}(X_{\mathrm{et}},\mathbb{Z}{}/m\mathbb{Z}{}))\otimes_{\mathbb{Z}}\mathbb{Q}%
{}\right)  \times H_{p}^{\ast}(X).
\]
When $X$ is connected, there is an \textquotedblleft
orientation\textquotedblright\ isomorphism
\[
\langle\cdot\rangle\colon H_{\mathbb{A}{}}^{2\dim X}(X)(\dim X)\simeq
\mathbb{A}{}\overset{\text{{\tiny def}}}{=}\mathbb{A}{}^{p,\infty}\times
B(\mathbb{F}{}).
\]
For each $l$, there is a projection map $H_{\mathbb{A}{}}^{\ast}(X)\rightarrow
H_{l}^{\ast}(X)$.

\begin{theorem}
\label{r6}Let $X$ be a connected variety of dimension $d$ over $\mathbb{F}{}$,
and let $\mathcal{R}{}^{\ast}$ be a graded $\mathbb{Q}{}$-subalgebra of
$H_{\mathbb{A}{}}^{2\ast}(X)(\ast)$ of finite degree such that $\langle
\mathcal{R}^{d}\rangle\subset\mathbb{Q}{}$ and, for all $l$, the image of
$\mathcal{R}{}^{\ast}$ in $H_{l}^{2\ast}(X)(\ast)$ under the projection map is
contained in $\mathcal{T}{}_{l}^{\ast}(X)$. Fix an $r$. If, for some $l$,
\bquote ($\dagger$) the product pairings
\[
\mathcal{T}{}_{l}^{r}(X)\times\mathcal{T}{}_{l}^{d-r}(X)\rightarrow
\mathcal{T}{}_{l}^{d}(X)\simeq\mathbb{Q}{}_{l}%
\]
are nondegenerate and the images of $\mathcal{R}{}^{r}$ in $\mathcal{T}{}%
_{l}^{r}(X)$ and of $\mathcal{R}{}^{d-r}$ in $\mathcal{T}{}_{l}^{d-r}(X)$ span
them,\equote then this is true for all $l$; moreover, the pairing
$\mathcal{R}{}^{r}\times\mathcal{R}^{d-r}\rightarrow\mathbb{Q}$ is
nondegenerate and the map $\mathbb{Q}{}_{l}\otimes_{\mathbb{Q}{}}\mathcal{R}%
{}^{r}\rightarrow\mathcal{T}{}_{l}^{r}(X)$ is an isomorphism for all $l$.
\end{theorem}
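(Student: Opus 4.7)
The plan is to exploit Proposition \ref{r5a}, first at the given $l_{0}$ to extract dimensional information from $(\dagger)$, and then at every other $l$ by transporting this information through the rank of the $\mathbb{Q}$-bilinear form on $\mathcal{R}^{\ast}$.

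I would first apply \ref{r5a} at $l_{0}$ with $Q_{0}=\mathbb{Q}$, $Q=\mathbb{Q}_{l_{0}}$, $W=\mathcal{T}_{l_{0}}^{r}$, $W'=\mathcal{T}_{l_{0}}^{d-r}$, and $R=R_{l_{0}}$, $R'=R'_{l_{0}}$ the images of $\mathcal{R}^{r}$ and $\mathcal{R}^{d-r}$. The cup-product pairing restricts to a $\mathbb{Q}$-valued pairing on $R_{l_{0}}\times R'_{l_{0}}$ because $\langle\mathcal{R}^{d}\rangle\subset\mathbb{Q}$. Hypothesis $(\dagger)$ supplies S, T, and T$'$, so parts (c), (b), (d) of \ref{r5a} yield nondegeneracy of $R_{l_{0}}\times R'_{l_{0}}\to\mathbb{Q}$, an isomorphism $\mathbb{Q}_{l_{0}}\otimes_{\mathbb{Q}}R_{l_{0}}\simeq\mathcal{T}_{l_{0}}^{r}$, and the equality $\dim_{\mathbb{Q}}R_{l_{0}}=\dim_{\mathbb{Q}_{l_{0}}}\mathcal{T}_{l_{0}}^{r}$. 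Using \ref{r5b} and \ref{r5d}, this common dimension equals $n:=\mathrm{mult}(1,P_{r})$, where $P_{r}\in\mathbb{Q}[T]$ is the characteristic polynomial of Frobenius on $H^{2r}(X)(r)$; by independence of $\ell$ for smooth projective varieties, $P_{r}$ (and hence $n$) is the same for every $l$.

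To propagate $(\dagger)$ to an arbitrary $l$, note that, because $\langle\mathcal{R}^{d}\rangle\subset\mathbb{Q}$, the $\mathcal{R}$-pairing factors through a well-defined $\mathbb{Q}$-valued pairing on $R_{l}\times R'_{l}$ (a change-of-lift difference has vanishing $l$-component, so its contribution lies in the kernel of $\mathbb{Q}\hookrightarrow\mathbb{Q}_{l}$). The factorization through the nondegenerate $R_{l_{0}}\times R'_{l_{0}}\to\mathbb{Q}$ already pins the rank of the $\mathcal{R}$-pairing at $n$, and the re-factorization at $l$ therefore forces the rank of $R_{l}\times R'_{l}\to\mathbb{Q}$ to equal $n$ as well. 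Applying \ref{r5a}(d) with $R=R_{l}\subset W=\mathcal{T}_{l}^{r}$ and invoking \ref{r5d} gives
\[
n \;=\; \dim_{\mathbb{Q}}(R_{l}/N_{l})\;\leq\;\dim_{\mathbb{Q}_{l}}\mathcal{T}_{l}^{r}\;\leq\; n.
\]
Equality throughout forces T and E at $l$ (the equality case of (d)) and, via \ref{r5b}/\ref{r5d}, forces the Tate pairing at $l$ to be nondegenerate. Thus $(\dagger)$ holds at every $l$.

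For the remaining conclusions, the orientation $H_{\mathbb{A}}^{2d}(X)(d)\simeq\mathbb{A}$ together with $\langle\mathcal{R}^{d}\rangle\subset\mathbb{Q}$ gives an injection $\mathcal{R}^{d}\hookrightarrow\mathbb{Q}$, so $v\cup w=0$ in $H_{\mathbb{A}}^{2d}(d)$ if and only if $\langle v\cup w\rangle=0$ in $\mathbb{Q}$. If $v$ lies in the left kernel of the $\mathcal{R}$-pairing, then $v\cup w=0$ in $H_{\mathbb{A}}^{2d}(d)$ for every $w\in\mathcal{R}^{d-r}$; projecting to each $l$, the Tate class $v_{l}\in\mathcal{T}_{l}^{r}$ is orthogonal to $R'_{l}$, so by $(\dagger)$ at $l$ (which now supplies both the spanning and the nondegenerate Tate pairing) $v_{l}=0$ for every $l$, whence $v=0$. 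This gives the nondegeneracy of the pairing on $\mathcal{R}^{\ast}$ and, in particular, injectivity of $\mathcal{R}^{r}\to R_{l}$ for every $l$; combining with $\mathbb{Q}_{l}\otimes_{\mathbb{Q}}R_{l}\simeq\mathcal{T}_{l}^{r}$ yields $\mathbb{Q}_{l}\otimes_{\mathbb{Q}}\mathcal{R}^{r}\simeq\mathcal{T}_{l}^{r}$. The main subtlety is the middle bootstrap: only after $(\dagger)$ at $l_{0}$ has pinned the rank of the global $\mathbb{Q}$-form does the sandwich $\dim(R_{l}/N_{l})\leq\dim\mathcal{T}_{l}^{r}\leq n$ at every other $l$ collapse to equality and simultaneously deliver spanning, nondegeneracy, and the isomorphism statements.
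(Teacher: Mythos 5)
Your argument is correct and is essentially the paper's own proof: the same sandwich $\dim_{\mathbb{Q}}(\mathcal{R}^{r}/N^{r})\le\dim_{\mathbb{Q}_{l}}\mathcal{T}_{l}^{r}(X)\le n$ built from Proposition \ref{r5a}, with the two $l$-independent end terms being the rank of the rational pairing (your rank bookkeeping is exactly the paper's isomorphism $\mathcal{R}^{r}/N^{r}\simeq\mathcal{R}_{l}^{r}/N_{l}^{r}$) and the multiplicity of $1$ in the characteristic polynomial of Frobenius, followed by the same use of the equality cases of \ref{r5a}(d) and of (\ref{r5d}), (\ref{r5b}) to get spanning, $N_{l}^{r}=0$, and nondegeneracy of the Tate pairing at every $l$. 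Two small points to tidy, both handled in the paper: the multiplicity $n$ must be taken for a sufficiently large finite model (the paper's $m^{r}=\max_{X_{1}}m^{r}(X_{1})$, so that \ref{r5d} and \ref{r5b} apply to the same Frobenius computing $\mathcal{T}_{l}^{\ast}$), and at $l=p$ the dimension bound and the nondegeneracy criterion require the isocrystal statements (\ref{r5f}), (\ref{r5g}) together with the Katz--Messing comparison of characteristic polynomials, not just independence of $\ell\neq p$.
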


\begin{proof}
Recall that, for any model $X_{1}/\mathbb{F}{}_{p^{a}}$ of $X$ over a finite
subfield of $\mathbb{F}{}$, the characteristic polynomial $P^{r}%
(X_{1}/\mathbb{F}{}_{p^{a}},T)\overset{\textup{{\tiny def}}}{=}\det
(1-\pi|H_{\ell}^{2r}(X)(r))$ of the Frobenius endomorphism $\pi$ of
$X_{1}/\mathbb{F}{}_{q}$ is independent of $\ell\neq p$, and moreover equals
the characteristic polynomial of $F^{a}$ acting on $H_{p}^{2r}(X_{1})(r)$
(\cite{katzM1974}). We let $m^{r}(X_{1})$ denote the multiplicity of $1$ as a
root of this polynomial, and we let $m^{r}=\max_{X_{1}}m^{r}(X_{1})$. Then
$\dim_{\mathbb{Q}{}_{l}}\mathcal{T}{}_{l}^{r}(X)\leq m^{r}$ (see \ref{r5d} and
\ref{r5f}).

Let $\mathcal{R}{}_{l}^{r}$ denote the image of $\mathcal{\mathcal{R}}^{r}$ in
$\mathcal{T}{}_{l}^{r}$, and let $N^{r}$ and $N_{l}^{r}$ denote the left
kernels of the pairings $\mathcal{R}{}^{r}\times\mathcal{R}{}^{d-r}%
\rightarrow\mathcal{R}{}^{d}$ and $\mathcal{R}{}_{l}^{r}\times\mathcal{R}%
{}_{l}^{d-r}\rightarrow\mathcal{R}{}_{l}^{d}$. Note that, because
$\mathcal{R}{}^{d-r}\rightarrow\mathcal{R}{}_{l}^{d-r}$ is surjective, the map
$\mathcal{R}{}^{r}\rightarrow\mathcal{R}{}_{l}^{r}$ sends $N^{r}$ into
$N_{l}^{r}$ and defines an isomorphism $\mathcal{R}{}^{r}/N^{r}\rightarrow
\mathcal{R}{}_{l}^{r}/N_{l}^{r}$.

We apply Proposition \ref{r5a} to the $\mathbb{Q}{}_{l}$-vector spaces
$\mathcal{T}_{l}^{r}(X)$ and $\mathcal{T}{}_{l}^{d-r}(X)$ and their
$\mathbb{Q}{}$-subspaces $\mathcal{R}{}_{l}^{r}$ and $\mathcal{R}{}_{l}^{d-r}%
$. Note that condition ($\dagger$) says that, for some $l$, statements $S$,
$T$, and $T^{\prime}$ hold, and hence also $E$ (by \ref{r5a}c).

For all $l$,%
\begin{equation}
\dim_{\mathbb{Q}{}}(\mathcal{R}{}^{r}/N^{r})=\dim_{\mathbb{Q}{}}(\mathcal{R}%
{}_{l}^{r}/N_{l}^{r})\overset{\text{(\ref{r5a}d)}}{\leq}\dim_{\mathbb{Q}{}%
_{l}}(\mathcal{T}{}_{l}^{r})\leq m^{r}\text{.} \label{e16}%
\end{equation}
Note that
\begin{equation}
\dim_{\mathbb{Q}{}}(\mathcal{R}{}_{l}^{r}/N_{l}^{r})=\dim_{\mathbb{Q}{}_{l}%
}(\mathcal{T}{}_{l}^{r})\overset{\text{(\ref{r5a}d)}}{\iff}\mathbb{Q}{}%
_{l}\cdot\mathcal{R}{}_{l}^{r}=\mathcal{T}{}_{l}^{r}\text{ and }N_{l}^{r}=0
\label{e22}%
\end{equation}
and that%
\begin{equation}
\dim_{\mathbb{Q}{}_{l}}(\mathcal{T}{}_{l}^{r})=m^{r}\overset{(\text{\ref{r5b}%
,\ref{r5g})}}{\iff}\text{the pairing }\mathcal{T}{}_{l}^{r}\times
\mathcal{T}_{l}^{d-r}\rightarrow\mathcal{T}{}_{l}^{d}\simeq\mathbb{Q}{}%
_{l}\text{ is nondegenerate.} \label{e23}%
\end{equation}

For those $l$ for which ($\dagger$) holds, the right hand statements in
(\ref{e22}) and (\ref{e23}) hold, and so equality holds throughout in
(\ref{e16}). Since the two end terms do not depend on $l$, equality holds
throughout in (\ref{e16}) for all $l$. Therefore the left hand statements in
(\ref{e22}) and (\ref{e23}) hold \emph{for all }$l$, and we deduce that

\begin{itemize}
\item the pairing $\mathcal{T}{}_{l}^{r}\times\mathcal{T}_{l}^{d-r}%
\rightarrow\mathbb{Q}{}_{l}$ is nondegenerate for all $l$,

\item the group $N_{l}^{r}=0$ for all $l$, and (by \ref{r5a}b)

\item the map $\mathbb{Q}{}_{l}\otimes_{\mathbb{Q}{}}\mathcal{R}{}_{l}%
^{r}\rightarrow\mathcal{T}_{l}^{r}(X)$ is an isomorphism for all $l$.
\end{itemize}

\noindent As $N^{r}$ maps into $N_{l}^{r}$ for all $l$ and the map
$\mathcal{R}^{\ast}{}\rightarrow\prod\nolimits_{l}H_{l}^{2\ast}(X)(\ast)$ is
injective, this implies that $N^{r}=0$ and so $\mathcal{R}{}^{r}%
\simeq\mathcal{R}{}_{l}^{r}$ for all $l$. Therefore $\mathbb{Q}{}_{l}%
\otimes_{\mathbb{Q}{}}\mathcal{R}{}^{r}\rightarrow\mathcal{T}{}_{l}^{r}(X)$ is
an isomorphism for all $l$ and $r$.
\end{proof}

\begin{remark}
\label{r6b}(a) In Proposition \ref{r5a}, it is not necessary to assume that
the maps $R\rightarrow W$ and $R^{\prime}\rightarrow W^{\prime}$ are injective.

(b) When applied to the $\mathbb{Q}{}$-subalgebra of $H_{\mathbb{A}{}}^{2\ast
}(X)(\ast)$ generated by algebraic classes, Theorem \ref{r6} extends Theorem
2.9 of \cite{tate1994} by allowing $\ell=p$.
\end{remark}

\subsection{An application of tannakian theory}

Throughout this section, $k$ is an algebraically closed field and $H_{W}$ is a
Weil cohomology theory on the algebraic varieties over $k$. By this I mean
that $H_{W}$ is a contravariant functor defined on the varieties over $k$,
sending disjoint unions to direct sums, and satisfying the conditions (1)--(4)
and (6) of \cite{kleiman1994}, \S 3, on connected varieties (finiteness,
Poincar\'{e} duality, K\"{u}nneth formula, cycle map, strong Lefschetz
theorem). The coefficient field of $H_{W}$ is denoted $Q$.

Let $\mathcal{S}{}$ be a class of algebraic varieties over $k$ satisfying the
following condition:\label{*}

\begin{quote}
(*) the projective spaces $\mathbb{P}{}^{n}$ are in $\mathcal{S}$, and
$\mathcal{S}{}$ is closed under passage to a connected component and under the
formation of products and disjoint unions.
\end{quote}

\noindent Let $Q_{0}$ be a subfield of $Q$, and for each $X\in\mathcal{S}{}$,
let $\mathcal{R}{}^{\ast}(X)$ be a graded $Q_{0}$-subalgebra of $H_{W}^{2\ast
}(X)(\ast)$ of finite degree. We assume the following:\label{R12}

\begin{enumerate}
\item[(R0)] for all connected $X\in\mathcal{S}{}$, the \textquotedblleft
orientation\textquotedblright\ isomorphism $H_{W}^{2\dim X}(X)(\dim X)\simeq
Q$ induces an isomorphism $\langle\cdot\rangle\colon\mathcal{R}{}^{\dim
X}(X)\simeq Q_{0}$;

\item[(R1)] for every regular map $f\colon X\rightarrow Y$ of varieties in
$\mathcal{S}{}$, $f^{\ast}\colon H_{W}^{2\ast}(Y)(\ast)\rightarrow$ $H_{W{}%
}^{2\ast}(X)(\ast)$ maps $\mathcal{R}{}^{\ast}(Y)$ into $\mathcal{R}{}^{\ast
}(X)$ and $f_{\ast}$ maps $\mathcal{R}{}^{\ast}(X)$ into $\mathcal{R}{}%
^{\ast+\dim Y-\dim X}(Y)$;\footnote{Whenever I write $\dim X$, I am implicitly
assuming that $X$ is equidimensional (and often that it is connected). I leave
it to the reader to make the necessary adjustments when it isn't.}

\item[(R2)] for every $X$ in $\mathcal{S}{}$, $\mathcal{R}{}^{1}(X)$ contains
the divisor classes.
\end{enumerate}

\noindent Because $\mathcal{R}{}^{\ast}(X)$ is closed under cup-products,
condition (R2) implies that the class of every point on $X$ lies in
$\mathcal{R}{}^{\dim X}(X)$, and so the isomorphism $\mathcal{R}{}^{\dim
X}(X)\simeq Q_{0}$ in (R0) is that sending the class of a point to $1$. The
cohomology class of the graph $\Gamma_{f}$ of any regular map $f\colon
X\rightarrow Y$ lies in $\mathcal{R}{}^{\dim Y}(X\times Y)$ because
$\Gamma_{f}=(\id_{X},f)_{\ast}(X)$ and so
\[
cl(\Gamma_{f})=(\id_{X},f)_{\ast}(cl(X))=(\id_{X},f)_{\ast}(1).
\]

The category of correspondences $\mathsf{C}(k)$ defined by $\mathcal{R}{}$ has
one object $hX$ for each $X\in\mathcal{S}{}$, and the morphisms from $X$ to
$Y$ are the elements of $\mathcal{R}{}^{\dim X}(X\times Y)$; composition of
morphisms is defined by the formula:%
\[
(f,g)\mapsto g\circ f=p_{XZ\ast}(p_{XY}^{\ast}f\cdot p_{YZ}^{\ast}%
g)\colon\mathcal{R}{}^{\dim X}(X\times Y)\times\mathcal{R}{}^{\dim Y}(Y\times
Z)\rightarrow\mathcal{R}{}^{\dim X}(X\times Z).
\]
This is a $Q{}_{0}$-linear category, and there is a contravariant functor from
the category of varieties in $\mathcal{S}{}$ to $\mathsf{C}(k{})$ sending $X$
to $hX$ and a regular map $f\colon Y\rightarrow X$ to the transpose of its
graph in $\mathcal{R}{}^{\dim X}(X\times Y)$.

Recall that the pseudo-abelian hull $\mathsf{C}^{+}$ of an additive category
$\mathsf{C}$ has one object $(x,e)$ for each object $x$ in $\mathsf{C}$ and
idempotent $e$ in $\End(x)$, and the morphisms from $(x,e)$ to $(y,f)$ are the
elements of the subgroup $f\circ\Hom(x,y)\circ e$ of $\Hom(x,y)$.

\begin{proposition}
\label{r6a}If the product pairings%
\begin{equation}
\mathcal{R}{}^{r}(X)\times\mathcal{R}{}^{\dim X-r}(X)\longrightarrow
\mathcal{R}{}^{\dim X}(X)\simeq Q_{0}{} \label{e13}%
\end{equation}
are nondegenerate for all connected $X\in\mathcal{S}{}$ and all $r\geq0$, then
$\mathsf{C}(k)^{+}$ is a semisimple abelian category.
\end{proposition}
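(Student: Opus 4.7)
Following Jannsen's strategy for semisimplicity of motives, I would first show that the endomorphism ring of every object of $\mathsf{C}(k)$ is a semisimple $Q_0$-algebra, and then upgrade this to semisimplicity of $\mathsf{C}(k)^+$ as an abelian category. For $X\in\mathcal{S}{}$, set $A_X=\End_{\mathsf{C}(k)}(hX)=\mathcal{R}{}^{\dim X}(X\times X)$. By the closure condition (*), $X\times X$ lies in $\mathcal{S}{}$, and $X\times X$ is connected when $X$ is (products of connected smooth projective varieties over an algebraically closed field are connected), so $A_X$ is a finite-dimensional $Q_0$-algebra under correspondence composition.

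The crux is to equip $A_X$ with a trace and identify it with the Poincar\'{e}-type pairing supplied by the hypothesis. Define $\Tr\colon A_X\to Q_0$ by $\Tr(f)=\langle\Delta_X^{*}f\rangle$, using the diagonal and the orientation isomorphism from (R0). A routine correspondence-calculus computation (composition = $p_{13*}(p_{12}^{*}\cdot p_{23}^{*})$, projection formula, restriction to the diagonal) gives the formula
\[
\Tr(f\circ g)=\langle f\cdot\sigma^{*}g\rangle,
\]
where $\sigma$ is the involution of $X\times X$ swapping the factors and the outer symbol denotes the product pairing $\mathcal{R}{}^{\dim X}(X\times X)\times\mathcal{R}{}^{\dim X}(X\times X)\to\mathcal{R}{}^{2\dim X}(X\times X)\simeq Q_0$. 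Applying the hypothesis of the proposition to the connected variety $X\times X$ (with $r=\dim X$) shows that this product pairing is nondegenerate. Since $\sigma\in\Aut(X\times X)$, by (R1) the map $g\mapsto\sigma^{*}g$ is a $Q_0$-linear involution of $A_X$, so the bilinear form $B(f,g)=\Tr(f\circ g)$ is also nondegenerate. I expect this identification to be the main obstacle, since it requires transporting the nondegeneracy from the product pairing to the trace pairing.

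Next I would invoke the standard algebraic lemma: a finite-dimensional $Q_0$-algebra $A$ with a linear trace $T$ such that $T(xy)$ is a nondegenerate bilinear form and such that $T$ vanishes on nilpotent elements is semisimple. (For $x\in\rad(A)$ and any $y\in A$, $xy\in\rad(A)$ is nilpotent, hence $T(xy)=0$, and nondegeneracy forces $x=0$.) To check the vanishing hypothesis on $A_X$, I would use the Lefschetz trace formula $\Tr(f)=\sum_{i}(-1)^{i}\Tr(f\mid H_W^{i}(X))$, which is a standard property of Weil cohomology: the realization $A_X\to\prod_i\End_Q(H_W^{i}(X))$ is a ring homomorphism, so if $f^n=0$ in $A_X$ then each induced operator is nilpotent and its linear-algebraic trace vanishes.

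Finally, an object of $\mathsf{C}(k)^+$ is a pair $(hX,e)$ with $e\in A_X$ an idempotent, and its endomorphism ring is the corner $eA_X e$, which is semisimple as any corner of a semisimple algebra. In a $Q_0$-linear pseudo-abelian category whose Hom spaces are finite-dimensional and whose endomorphism rings are all semisimple, one uses the Wedderburn decomposition of each $\End((hX,e))$ to split $(hX,e)$ into a finite direct sum of objects with division endomorphism rings (the simple objects); for any morphism $u\colon M\to N$, the kernel and image are the objects cut out by the idempotents of $\End(M)$ and $\End(N)$ produced by these decompositions. This exhibits $\mathsf{C}(k)^+$ as a semisimple abelian category, which is the desired conclusion.
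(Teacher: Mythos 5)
The proposal is correct and takes essentially the same route as the paper's (Jannsen-style) proof: the nondegeneracy hypothesis applied to the connected variety $X\times X$ in middle dimension, combined with the Lefschetz trace formula, kills the Jacobson radical of $\mathcal{R}^{\dim X}(X\times X)$, and then passage to corners $e\,\End(hX)\,e$ plus Jannsen's Lemma 2 yields semisimplicity of the pseudo-abelian hull. Your trace form $\Tr(f\circ g)=\langle f\cdot\sigma^{*}g\rangle$ is just a repackaging of the formula $\langle f\cdot g^{t}\rangle=\sum_{i}(-1)^{i}\Tr(f\circ g\mid H_{W}^{i}(X))$ that the paper cites from Kleiman, and your closing sketch is the content of Jannsen's lemma, which the paper simply cites.
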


\begin{pf}
[following \cite{jannsen1992}]An $f\in\mathcal{R}{}^{\dim X+r}(X\times Y)$
defines a linear map
\[
x\mapsto q_{\ast}(p^{\ast}x\cdot f)\cdot H_{W}^{\ast}(X)\rightarrow
H_{W}^{\ast+2r}(Y)(r).
\]
In particular, an element $f$ of $\mathcal{R}{}^{\dim X}(X\times X)$ defines
an endomorphism of $H_{W}^{\ast}(X)$. There is the following Lefschetz
formula: let $f,g\in\mathcal{R}{}^{\text{dim}X}(X\times X)$, and let $g^{t}$
be the transpose of $g$; then%
\[
\langle f\cdot g^{t}\rangle=\sum\nolimits_{i=0}^{2\dim X}(-1)^{i}\Tr(f\circ
g|H_{W}^{i}(X))
\]
(\cite{kleiman1968}, 1.3.6).

Let $f$ be an element of the ring $R(X)\overset{\text{{\tiny def}}}%
{=}\mathcal{R}{}^{\dim X}(X\times X)$. If $f$ is in the Jacobson
radical\footnote{Recall that the Jacobson radical of a ring $R$ is the set of
elements of $R$ that annihilate every simple $R$-module. It is a two-sided
ideal in $R$, which is nilpotent if $R$ is Artinian. A ring is semisimple if
and only if its Jacobson radical is zero.} of $R(X)$, then $f\cdot g^{t}$ is
nilpotent for all $g\in R(X)$, and so the Lefschetz formula shows that
$\langle f\cdot g\rangle=0$. Now (\ref{e13}) implies that $f=0$, and so the
ring $R(X)$ is semisimple. It follows that $e\cdot R(X)\cdot e$ is also
semisimple for any idempotent $e$ in $R(X)$. Thus $\mathsf{C}(k)^{+}$ is a
pseudo-abelian category such that $\End(x)$ is a semisimple $Q_{0}$-algebra of
finite degree for every object $x$, and this implies that it is a semisimple
abelian category (\cite{jannsen1992}, Lemma 2).
\end{pf}

The tensor product structure%
\[
hX\otimes hY\overset{\text{{\tiny def}}}{=}h(X\times Y)
\]
on $\mathsf{C}(k)$ extends to $\mathsf{C}(k)^{+}$, and with this structure
$\mathsf{C}(k)^{+}$ becomes a pseudo-abelian tensor category. The object
$h\mathbb{P}{}^{1}$ of $\mathsf{C}(k)$ decomposes into a direct sum
$\1\oplus\mathbb{L}{}$ in $\mathsf{C}(k)^{+}$, where $\mathbb{L}{}$ is (by
definition) the Lefschetz object. On inverting $\mathbb{L}{}$, we obtain the
category $\mathsf{M}(k)$ of false motives, which is a pseudo-abelian rigid
tensor category (\cite{saavedra1972}, VI 4.1.3.5). When the K\"{u}nneth
components of the diagonal of every variety $X$ in $\mathcal{S}{}$ lie in
$\mathcal{R}{}^{\dim X}(X\times X)$, they can be used to modify the
commutativity constraint on $\mathsf{M}(k)$ to obtain the category $\Mot(k)$
of (true) motives (ibid. VI 4.2.1.5). Every triple $(X,e,m)$ with
$X\in\mathcal{S}{}$, $e$ an idempotent in the ring $\mathcal{R}{}^{\dim
X}(X\times X)$, and $m\in\mathbb{Z}{}$, defines an object%
\[
h(X,e,m)\overset{\text{{\tiny def}}}{=}(hX,e)\otimes\mathbb{L}{}^{-m}%
\]
in $\Mot(k)$, and all objects of $\Mot(k)$ are isomorphic to an object of this form.

Now (\ref{r6a}) implies the following statement:

\begin{theorem}
\label{r6t}Assume that, for all connected $X\in\mathcal{S}{}$, the product
pairings (\ref{e13}) are nondegenerate and the K\"{u}nneth components of the
diagonal lie in $\mathcal{R}{}$. Then $\Mot(k)$ is a semisimple tannakian
category over $k$ with the $Q$-valued fibre functor $\omega_{W}\colon
h(X,e,m)\rightsquigarrow e(H_{W}^{\ast}(X))(m).$
\end{theorem}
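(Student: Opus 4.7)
The plan is to verify, in order, semisimplicity of $\Mot(k)$, its rigid tensor structure, and the fact that $\omega_{W}$ is a $Q$-valued fibre functor. The two hypotheses play complementary roles: nondegeneracy of the pairings (\ref{e13}) supplies semisimplicity through Proposition \ref{r6a}, while the K\"{u}nneth hypothesis is needed both to cut $\Mot(k)$ out of $\mathsf{M}(k)$ and to make $\omega_{W}$ a symmetric tensor functor.

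First I would apply Proposition \ref{r6a} to obtain that $\mathsf{C}(k)^{+}$ is a $Q_{0}$-linear semisimple abelian category. Formally inverting the $\otimes$-invertible object $\mathbb{L}$ preserves semisimplicity, so $\mathsf{M}(k)$ is semisimple abelian; and replacing the commutativity constraint via the K\"{u}nneth components leaves the underlying abelian category intact, so $\Mot(k)$ is semisimple abelian. Its pseudo-abelian rigid tensor structure is the one produced by the Saavedra constructions already cited (VI 4.1.3.5 and 4.2.1.5); in particular, the dual of $h(X,e,m)$ is $h(X,e^{t},\dim X - m)$, with evaluation and coevaluation coming from Poincar\'{e} duality and the requisite classes living in $\mathcal{R}$ by (R0), (R2), and the K\"{u}nneth hypothesis.

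The substantive step is to show that $\omega_{W}\colon h(X,e,m)\rightsquigarrow e(H_{W}^{*}(X))(m)$ is a $Q$-linear, exact, faithful, symmetric tensor functor from $\Mot(k)$ to $\mathsf{Vec}_{Q}$. Linearity and exactness are automatic from the construction. For faithfulness I would recycle the Lefschetz argument from the proof of Proposition \ref{r6a}: if $f \in R(X) = \mathcal{R}^{\dim X}(X \times X)$ annihilates $H_{W}^{*}(X)$, then $\Tr(f \circ g \mid H_{W}^{*}(X)) = 0$ for every $g \in R(X)$, so the Lefschetz formula gives $\langle f \cdot g^{t} \rangle = 0$ for all $g$, whence $f = 0$ by nondegeneracy of (\ref{e13}); since every object of $\Mot(k)$ is a direct summand of some $h(X)(m)$ and the category is semisimple, faithfulness on these generators propagates throughout $\Mot(k)$.

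The genuine obstacle is compatibility of $\omega_{W}$ with the symmetry constraint. Without the K\"{u}nneth modification, the symmetry morphism $hX \otimes hY \to hY \otimes hX$ induced by swapping the factors of $X \times Y$ realizes on cohomology as the signed flip $a \otimes b \mapsto (-1)^{\deg a\,\deg b}\,b \otimes a$, so $\omega_{W}$ would fail to be symmetric monoidal. The modification of the commutativity constraint in the passage from $\mathsf{M}(k)$ to $\Mot(k)$ is engineered precisely to absorb those Koszul signs using the K\"{u}nneth projectors; this is the content of Saavedra VI 4.2.1.5, and its verification is the only non-bookkeeping step. Once it is in hand, $\omega_{W}$ is an exact faithful $Q$-linear symmetric tensor functor from a $Q_{0}$-linear semisimple pseudo-abelian rigid tensor category with $\End(\1) = Q_{0}$ (by (R0) applied to $X = \Spec k$), and the recognition theorem for tannakian categories delivers the conclusion.
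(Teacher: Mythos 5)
Your argument is correct and is essentially the paper's: the paper deduces the theorem directly from Proposition \ref{r6a} together with the cited Saavedra constructions (VI 4.1.3.5, VI 4.2.1.5), and your expansion of the remaining routine steps (faithfulness via the Lefschetz formula, the Koszul signs absorbed by the K\"unneth modification of the commutativity constraint, then the recognition theorem) fills in exactly what the paper leaves implicit. Note only that ``over $k$'' in the statement should be read as ``over $Q_{0}$'', which is how you have in effect treated it.
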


Recall that, for a variety $X$ and any $n\geq0$, the K\"{u}nneth formula
provides an isomorphism%
\begin{equation}
H_{W}^{\ast}(X^{n})\simeq\bigotimes\nolimits^{n}H_{W}^{\ast}(X). \label{e14}%
\end{equation}
Therefore, every automorphism of the $Q$-vector space $H_{W}^{\ast}(X)$
defines an automorphism of $H_{W}^{\ast}(X^{n}).$

\begin{corollary}
\label{r6c}With the assumptions of the theorem, let $G_{X}$ be the largest
algebraic subgroup of $\GL(H_{W}^{\ast}(X))\times\GL(Q(1))$ fixing some
elements of $\bigoplus\nolimits_{n}\mathcal{R}{}^{\ast}(X^{n})$. Then%
\[
H_{W}^{2\ast}(X^{n})(\ast)^{G_{X}}\subset Q\cdot\mathcal{R}{}^{\ast}%
(X^{n})\text{ for all }n.
\]

\end{corollary}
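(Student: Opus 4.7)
My approach is to realize $G_X$ as containing the Tannaka group of the tannakian subcategory of $\Mot(k)$ generated by $hX$, and then to apply the standard tannakian characterization of invariants in such a category.

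Let $\langle hX\rangle^\otimes$ denote the strictly full tannakian subcategory of $\Mot(k)$ generated by $hX$. Since $X$ is projective, a hyperplane class furnishes a nonzero morphism $\1 \to h^2(X)(1)$, and semisimplicity (Theorem~\ref{r6t}) makes $\1$ a direct summand of $h^2(X)(1)$; hence $\1(1) = \mathbb{L}^{-1}$ lies in $\langle hX\rangle^\otimes$. By Theorem~\ref{r6t} this subcategory is semisimple, so its Tannaka group $G := \underline{\Aut}^\otimes(\omega_W|_{\langle hX\rangle^\otimes})$ is a reductive algebraic group over $Q$; since $hX$ and $\1(1)$ tensor-generate it, $\omega_W$ produces a closed embedding
\[
G \hookrightarrow \GL(H_W^\ast X) \times \GL(Q(1)).
\]

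Next I would observe that the description of morphisms in the category of correspondences identifies $\Hom_{\Mot}(\1, h(X^n)(r))$ with $\mathcal{R}^r(X^n)$, via $\alpha \mapsto (q \mapsto q\alpha)$; every such morphism is fixed by $G$ by the defining property of the Tannaka group. Therefore $G$ fixes each element of $\mathcal{R}^\ast(X^n)$ for every $n$, so $G \subset G_X$ inside $\GL(H_W^\ast X) \times \GL(Q(1))$. I would then invoke the standard tannakian identity: for every object $M$ of $\langle hX\rangle^\otimes$, the natural map
\[
Q \otimes_{Q_0} \Hom(\1, M) \longrightarrow \omega_W(M)
\]
is injective with image $\omega_W(M)^G$. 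Applied to $M = h^{2r}(X^n)(r)$ (note that the K\"unneth components of the diagonal lie in $\mathcal{R}^\ast$, so $G$ and $G_X$ both preserve the cohomological grading), this yields $H_W^{2r}(X^n)(r)^G = Q \cdot \mathcal{R}^r(X^n)$. Combined with $G \subset G_X$, we obtain $H_W^{2r}(X^n)(r)^{G_X} \subset H_W^{2r}(X^n)(r)^G = Q \cdot \mathcal{R}^r(X^n)$, and summing over $r$ gives the corollary.

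The point requiring most care, in my view, is the tannakian invariant identity used above, in the present non-neutral setting ($\omega_W$ lands in $\Vc_Q$ while $\langle hX\rangle^\otimes$ is $Q_0$-linear). This is standard but must be obtained either by first base-changing the category from $Q_0$ to $Q$ (so that $\omega_W$ neutralizes it) or by directly quoting the appropriate form of the Saavedra--Deligne tannakian duality. The remaining ingredients---semisimplicity of $\langle hX\rangle^\otimes$, the closed embedding of $G$, and the identification of motivic $\Hom$'s with cycle classes---are already in the paper or are entirely formal.
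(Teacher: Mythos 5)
Your proof is correct and takes essentially the same route as the paper: the paper sets $G=\underline{\Aut}^{\otimes}(\omega_{W})$ for all of $\Mot(k)$, quotes the tannakian identity $H_{W}^{2\ast}(Y)(\ast)^{G}=Q\cdot\mathcal{R}{}^{\ast}(Y)$ from Deligne--Milne, and observes that the image of $G$ in $\GL(H_{W}^{\ast}(X))\times\GL(Q(1))$ is contained in $G_{X}$, which is exactly your argument carried out with the Tannaka group of the subcategory generated by $hX$ and $\1(1)$. The base-change/invariant identity you single out as the delicate point is precisely what the paper's citation covers, so there is no gap.
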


\begin{proof}
Let $G=\underline{\Aut}^{\otimes}(\omega_{W})$. For every $Y\in\mathcal{S}{}$,
$G$ acts on $H_{W}^{2\ast}(Y)(\ast)$ and%
\[
H_{W}^{2\ast}(Y)(\ast)^{G}=Q\cdot\mathcal{R}{}^{\ast}(Y)
\]
(e.g., \cite{deligneM1982}). The image of $G$ in $\GL(H_{W}^{\ast}%
(X))\times\GL(Q(1))$ is contained in $G_{X}$, and the isomorphism (\ref{e14})
is $G$-equivariant, and so%
\[
H_{W}^{2\ast}(X^{n})(\ast)^{G_{X}}\subset H_{W}^{2\ast}(X^{n})(\ast
)^{G}=Q\cdot\mathcal{R}{}^{\ast}(X^{n}).
\]

\end{proof}

\subsection{Decomposition of the cohomology of an abelian variety over
$\mathbb{F}{}$}

Again let $H_{W}$ be a Weil cohomology theory with coefficient field $Q$. The
elements of the $\mathbb{Q}{}$-subalgebra of $H_{W}^{2\ast}(X)(\ast)$
generated by the divisor classes on a variety $X$ are called \emph{Lefschetz}
classes. A correspondence on a variety is said to be \emph{Lefschetz} if it is
defined by a Lefschetz class. For an abelian variety $A$, the $Q$-span of the
endomorphisms of $H_{W}^{2\ast}(A)(\ast)$ defined by Lefschetz classes
consists exactly of those commuting with the action of the Lefschetz group of
$A$. See \cite{milne1999lc}.

Let $A$ be an abelian variety over $k$ with sufficiently many endomorphisms,
i.e., such that $\End^{0}(A)$ contains an \'{e}tale subalgebra of degree
$2\dim A$ over $\mathbb{Q}{}$. The centre $C(A)$ of $\End^{0}(A)$ is a product
of CM-fields with possibly a copy of $\mathbb{Q}$, and so it has a
well-defined complex conjugation $\iota_{A}$. The Rosati involution of any
polarization of $A$ preserves each factor of $C(A)$ and acts on it as
$\iota_{A}$. The special Lefschetz group $S(A)$ of $A$ is the algebraic group
of multiplicative type over $\mathbb{Q}{}$ such that, for each $\mathbb{Q}{}%
$-algebra $R$,%
\[
S(A)(R{})=\{\gamma\in C(A)\otimes_{\mathbb{Q}{}}R\mid\gamma\cdot\iota
_{A}\gamma=1\}
\]
(\cite{milne1999lc}). It acts on $H_{W}^{\ast}(A)$, and when $Q$ splits
$S(A)$, we let $H_{W}^{\ast}(A)_{\chi}$ denote the subspace on which $S(A)$
acts through the character $\chi$ of $S(A)$.

Fix an isomorphism $Q\rightarrow Q(1)$ and use it to identify $H_{W}%
^{r}(A)(s)$ with $H_{W}^{r}(A)$.

\begin{lemma}
\label{r6z}Let $A$ be an abelian variety with sufficiently many endomorphisms,
and assume that $Q$ splits $C(A)$. Let $G$ be the centralizer (in the sense of
algebraic groups) of the image of $S(A)_{Q}$ in $\GL(H_{W}^{\ast}(A))$. Then
for each character $\chi$ of $S(A)$, the representation of $G$ on $H_{W}%
^{\ast}(A)_{\chi}$ is irreducible.
\end{lemma}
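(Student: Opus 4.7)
The plan is to reduce the lemma to the elementary fact that the centralizer of a split torus in $\GL(V)$ is a product of general linear groups on its weight spaces.

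First, since $Q$ splits $C(A)$, it also splits $S(A)$: the latter is a multiplicative-type subgroup of $(C(A)\otimes_{\mathbb{Q}{}}R)^{\times}$ cut out by the equation $\gamma\cdot\iota_{A}\gamma=1$, so any field splitting $C(A)$ splits $S(A)$ as well. Let $T$ denote the image of $S(A)_{Q}$ in $\GL(H_{W}^{\ast}(A))$; it is again a split torus. The decomposition
\[
H_{W}^{\ast}(A)=\bigoplus_{\chi}H_{W}^{\ast}(A)_{\chi}
\]
in the statement is then precisely the weight-space decomposition of $H_{W}^{\ast}(A)$ as a $T$-module, where $\chi$ ranges over the characters of $S(A)$ that actually occur (equivalently, over the characters of $T$). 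Distinct characters give linearly independent eigenspaces.

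Next I would invoke the standard description of the centralizer of a split torus: an element of $\GL(V)$ commutes with $T$ if and only if it preserves every $T$-weight space. This yields
\[
G=\prod_{\chi}\GL(H_{W}^{\ast}(A)_{\chi}),
\]
and the action of $G$ on the summand $H_{W}^{\ast}(A)_{\chi}$ factors through the projection to the $\chi$-factor $\GL(H_{W}^{\ast}(A)_{\chi})$ and is the tautological representation. Since the standard representation of $\GL(V_{\chi})$ is obviously irreducible, the same is true for the action of $G$ on $H_{W}^{\ast}(A)_{\chi}$.

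There is no real obstacle here: the lemma is a formal consequence of weight-space decomposition for split tori once one knows that $Q$ splits $S(A)$ and that $H_{W}^{\ast}(A)_{\chi}$ is the corresponding eigenspace. The only point to verify carefully is that $\chi$ really indexes distinct weight spaces of $T$, which follows directly from the definition of $H_{W}^{\ast}(A)_{\chi}$.
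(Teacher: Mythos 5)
Your argument is correct and is essentially the paper's own proof, which simply records the eigenspace decomposition $H_{W}^{\ast}(A)=\bigoplus_{\chi}H_{W}^{\ast}(A)_{\chi}$ and the resulting identification $G=\prod_{\chi}\GL(H_{W}^{\ast}(A)_{\chi})$. The only cosmetic point is that the image of $S(A)_{Q}$ need not be a torus (e.g.\ $S(A)=\mu_{2}$ for a supersingular elliptic curve) but only a split diagonalizable group of multiplicative type, and your centralizer argument applies verbatim in that generality.
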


\begin{proof}
Let $H_{W}^{\ast}(A)=\bigoplus\nolimits_{\chi\in\Xi}H_{W}^{\ast}(A)_{\chi}$.
Then $G=\prod\nolimits_{\chi\in\Xi}\GL(H_{W}^{\ast}(A)_{\chi})$, and so the
statement is obvious.
\end{proof}

We next compute $X^{\ast}(S(A))$. Let $\Sigma=\Hom_{\mathbb{Q}{}\text{-alg}%
}(C(A),Q)$. If $A$ is a supersingular elliptic curve, then $C(A)=\mathbb{Q}{}$
and $S(A)=\mu_{2}$. In this case $X^{\ast}(C(A))=\mathbb{Z}{}/2\mathbb{Z}{}$.
If $A$ is simple, but not a supersingular elliptic curve, then $C(A)$ is a CM
field $E$ and $X^{\ast}(S(A))$ is the quotient of $\mathbb{Z}{}^{\Sigma}$ by
the group of functions $h$ such that $h(\sigma)=h(\sigma\circ\iota_{A})$ for
all $\sigma\in\Sigma$. For $h\in\mathbb{Z}{}^{\Sigma}$ , let
\[
f(\sigma)=h(\sigma)-h(\sigma\circ\iota_{A}),\quad\sigma\in\Sigma.
\]
Then $f$ is a map $f\colon\Sigma\rightarrow\mathbb{Z}$ such that
\begin{equation}
f(\sigma\circ\iota_{A})=-f(\sigma) \label{e20}%
\end{equation}
which depends only on the class of $h$ in $X^{\ast}(S(A))$, and every $f$
satisfying (\ref{e20}) arises from a unique $h\in X^{\ast}(S(A))$. For a
general $A$, let $I(A)$ be a set of representatives for the simple isogeny
factors of $A$. Then $S(A)\simeq\prod\nolimits_{B\in I(A)}S(B)$ and so%
\[
X^{\ast}(S(A))\simeq\bigoplus\nolimits_{B\in I(A)}X^{\ast}(S(B))\text{.}%
\]
It follows that $X^{\ast}(S(A))$ can be identified with the set of families
$f=(f(\sigma))_{\sigma\in\Sigma}$ such that:

\begin{itemize}
\item if $\sigma=\sigma\circ\iota_{A}$, then $f(\sigma)\in\mathbb{Z}%
{}/2\mathbb{Z}{}$;

\item if $\sigma\neq\sigma\circ\iota_{A}$, then $f(\sigma)\in\mathbb{Z}{}$ and
$f(\sigma\circ\iota)=-f(\sigma)$.
\end{itemize}

For $h\in\mathbb{Z}{}^{\Sigma}$, let $H(A)_{h}$ be the subspace of
$H_{W}^{\ast}(A)$ on which the torus $(\mathbb{G}_{m})_{E/\mathbb{Q}{}}$ acts
through the character $h$. Then there is a decomposition%
\[
H_{W}^{\ast}(A)=\bigoplus_{f\in X^{\ast}(S(A))}H(A)_{f}\quad\text{where
}H(A)_{f}=\bigoplus_{h\in\mathbb{Z}{}^{\Sigma}\text{, }h\mapsto f}H(A)_{h}.
\]
The cup-product pairing $H_{W}^{\ast}(A)\times H_{W}^{2\dim A-\ast
}(A)\rightarrow H_{W}^{2\dim A}\simeq Q$ is equivariant for the action of
$S(A)$, and so the subspaces $H(A)_{f}$ and $H(A)_{f^{\prime}}$ are orthogonal
unless $f+f^{\prime}=0$ in which case they are dual. Note that $\iota_{A}$
acts on $X^{\ast}(S(A))$ as $-1$.

\begin{theorem}
\label{r3}Let $A$ be an abelian variety over $k$ with sufficiently many
endomorphisms, and let $d=\dim A$. Let $\mathcal{R}{}^{\ast}$ be a graded
$\mathbb{Q}{}$-subalgebra of $H_{W}^{2\ast}(A)(\ast)$ of finite degree such
that $\mathcal{R}{}^{1}$ contains the divisor classes, $\mathcal{R}{}^{\ast}$
is stable under the endomorphisms of $H_{W}^{2\ast}(A)(\ast)$ defined by
Lefschetz correspondences, and $\dim_{\mathbb{Q}{}}\mathcal{R}{}^{\dim A}=1$.
\noindent If there exists a finite Galois extension $Q^{\prime}$ of $Q$
splitting $C(A)$ and admitting a $Q$-automorphism $\iota^{\prime}$ such that
$\sigma\circ\iota_{A}=\iota^{\prime}\circ\sigma$ for all homomorphisms
$\sigma\colon C(A)\rightarrow Q$, \noindent then the product pairings%
\[
\mathcal{R}{}^{r}\times\mathcal{R}{}^{d-r}\rightarrow\mathcal{R}{}^{d}%
\simeq\mathbb{Q}{}%
\]
are nondegenerate for all $r$, and the map%
\[
\mathcal{R}{}^{\ast}\otimes_{\mathbb{Q}{}}Q\rightarrow H_{W}^{2\ast}(A)(\ast)
\]
is injective.
\end{theorem}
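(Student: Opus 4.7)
The plan is to extend scalars to $Q'$ and exploit the weight decomposition of cohomology under the split torus $S(A)_{Q'}$, reducing the problem to a statement on each weight component which we handle using the $\iota'$-symmetry and hard Lefschetz.

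Since $Q'$ splits $C(A)$, we get $H_W^*(A) \otimes_Q Q' = \bigoplus_{f \in X^*(S(A))} H(A)_f$ (preserving the cohomological grading); because the top class lies in $H(A)_0$, Poincar\'{e} duality pairs $H(A)_f \cap H^{2r}(A)(r)_{Q'}$ nondegenerately with $H(A)_{-f} \cap H^{2d-2r}(A)(d-r)_{Q'}$. The projector $\pi_f$ onto $H(A)_f$ commutes with $L(A)_{Q'}$ (since $S(A)$ is central in $L(A)$, so $L(A)$ preserves each weight space), so by the characterization of Lefschetz correspondences recalled at the start of \S 1.3, $\pi_f$ is a $Q'$-linear combination of endomorphisms defined by Lefschetz correspondences. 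Hence $\mathcal{R}^*_{Q'} := Q'\cdot\mathcal{R}^*$ is stable under each $\pi_f$, yielding $\mathcal{R}^*_{Q'} = \bigoplus_f \mathcal{R}_f$ with $\mathcal{R}_f := \mathcal{R}^*_{Q'} \cap H(A)_f$. Writing $\mathcal{R}^r_f := \mathcal{R}_f \cap H^{2r}(A)(r)_{Q'}$, and using that $\mathcal{R}^d_{Q'} \simeq Q'$ sits inside $H(A)_0$, the product pairing $\mathcal{R}^r_{Q'} \times \mathcal{R}^{d-r}_{Q'} \to Q'$ decomposes as $\bigoplus_f(\mathcal{R}^r_f \times \mathcal{R}^{d-r}_{-f} \to Q')$.

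Next I would combine the $\iota'$-symmetry with hard Lefschetz. The paper's analysis shows $\iota'$ acts as $-1$ on $X^*(S(A))$, and since $\mathcal{R}^*_{Q'}$ is $\iota'$-invariant (being the $Q'$-span of the $\mathbb{Q}$-rational subspace $\mathcal{R}^*$), $\iota'$ induces a $\iota'$-semilinear bijection $\mathcal{R}^r_f \simeq \mathcal{R}^r_{-f}$. Cup product with a hyperplane section $\eta \in \mathcal{R}^1 \cap H(A)_0$ is a Lefschetz correspondence preserving both $\mathcal{R}^*$ and the weight grading, so by hard Lefschetz $\eta^{d-2r}$ gives an injection $\mathcal{R}^r_f \hookrightarrow \mathcal{R}^{d-r}_f$ (and the same argument reversed gives the other inclusion), so $\dim_{Q'}\mathcal{R}^r_f = \dim_{Q'}\mathcal{R}^{d-r}_f = \dim_{Q'}\mathcal{R}^{d-r}_{-f}$. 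Hence nondegeneracy of each component pairing reduces to showing its left kernel is zero, and summing over $f$ will give nondegeneracy of $\mathcal{R}^r \times \mathcal{R}^{d-r} \to \mathbb{Q}$.

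The main obstacle will be precisely this last step: showing that each pairing $\mathcal{R}^r_f \times \mathcal{R}^{d-r}_{-f} \to Q'$ has trivial left kernel. The strategy is to combine Lemma \ref{r6z} (irreducibility of $H(A)_f$ under the centralizer $G$ of $S(A)$) with the $\iota'$-duality: for $\alpha \in \mathcal{R}^r_f$ in the left kernel, the element $\alpha \cdot \iota'(\alpha) \cdot \eta^{d-2r} \in \mathcal{R}^d_{Q'}$ (which has trivial weight) must vanish by the kernel hypothesis, and a Lefschetz-trace argument in the spirit of Jannsen's proof of Proposition \ref{r6a} should then force $\alpha = 0$. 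The technical subtlety lies in bridging the $G$-irreducibility of Lemma \ref{r6z} with the weaker concrete assumption that $\mathcal{R}^*$ is only known to be stable under the algebra of Lefschetz correspondences (the centralizer of $L(A)$, which is strictly smaller than the $G$ of Lemma \ref{r6z}). Once nondegeneracy is established, injectivity of $\mathcal{R}^* \otimes_\mathbb{Q} Q \to H_W^{2*}(A)(*)$ follows from Proposition \ref{r5a}(b).
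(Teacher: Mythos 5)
Your setup (extension of scalars to $Q'$, the weight decomposition under $S(A)_{Q'}$, duality of $H(A)_f$ with $H(A)_{-f}$, $\iota'$ acting as $-1$ on the weights, and the final descent via Proposition \ref{r5a}(b),(c)) runs parallel to the paper, but the step you yourself flag as ``the main obstacle'' is a genuine gap, and the strategy you sketch for it would not close it. Knowing that $\alpha\in\mathcal{R}^r_f$ lies in the left kernel gives $\langle\alpha\cdot\iota'(\alpha)\cdot\eta^{d-2r}\rangle=0$, but the vanishing of this single cup-product value cannot force $\alpha=0$ without a positivity statement of Hodge--Riemann type; $\iota'$ is merely a field automorphism of $Q'$, not a conjugation with any associated positivity, and the Hodge standard conjecture is precisely what is \emph{not} available here (in this paper it is deduced only later, as a consequence of a good theory of rational Tate classes, in Theorem \ref{r33}). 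Jannsen's trace argument in Proposition \ref{r6a} runs in the opposite direction: it uses nondegeneracy of the pairings as a hypothesis to kill the radical of $R(X)$, so it cannot be the engine that produces nondegeneracy.

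The idea you are missing is that one never needs to analyse kernels inside proper subspaces $\mathcal{R}_f\subsetneq H(A)_f$ at all. The ``technical subtlety'' you mention is exactly the content of the characterization recalled at the start of \S 1.3: after the fixed identification $Q\simeq Q(1)$, the $Q$-span of the endomorphisms defined by Lefschetz classes is the full commutant of the (special) Lefschetz group, so the centralizer $G$ of $S(A)_{Q'}$ of Lemma \ref{r6z} acts on $Q'\otimes_Q H_W^{\ast}(A)$ through Lefschetz correspondences, and hence $Q'\mathcal{R}^{\ast}$ is stable under all of $G$, not just under the projectors $\pi_f$. Since each $H(A)_f$ is a \emph{simple} $G$-module (Lemma \ref{r6z}) and distinct weights give non-isomorphic simples, $Q'\mathcal{R}^{\ast}\cap H(A)_f$ is either $0$ or all of $H(A)_f$; thus $Q'\mathcal{R}^{\ast}$ is a direct sum of full weight spaces, and $\iota'$-stability makes the set of occurring weights symmetric under $f\mapsto -f$. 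Nondegeneracy of $Q'\mathcal{R}^r\times Q'\mathcal{R}^{d-r}\rightarrow Q'$ is then immediate from Poincar\'e duality between $H(A)_f$ and $H(A)_{-f}$, with no left-kernel computation, no hard Lefschetz comparison of dimensions, and no trace argument; Proposition \ref{r5a}(c) and (b) then give the rational nondegeneracy and the injectivity, as you say. As written, your proposal proves the decomposition and the dimension equalities but leaves the decisive nondegeneracy unestablished, so it does not yet constitute a proof.
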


\begin{proof}
The group $G$ acts on $H_{W}^{\ast}(A)$ by Lefschetz correspondences because
its action commutes with that of $S(A)$. Therefore, $Q\mathcal{R}{}^{\ast}$ is
stable under $G$. For $f\in X^{\ast}(S(A))$, let $H(A)_{f}=(Q^{\prime}%
\otimes_{Q}H_{W}^{\ast}(A))_{f}$. As $H(A)_{f}$ is a simple $G$-module (by
\ref{r6z} applied to $H_{W^{\prime}}=Q^{\prime}\otimes H_{W}$), the
intersection $Q^{\prime}\mathcal{R}{}^{\ast}\cap H(A)_{f}$ is either $0$ or
the whole of $H(A)_{f}$. Because $Q^{\prime}\mathcal{R}{}^{\ast}$ is stable
under the action of $\iota^{\prime}$,%
\[
H(A)_{f}\subset Q^{\prime}\mathcal{R}{}^{\ast}\implies\iota^{\prime}%
H(A)_{f}\subset Q^{\prime}\mathcal{R}{}^{\ast}\text{.}%
\]
But $\iota^{\prime}H(A)_{f}=H(A)_{-f}$, and so the cup-product pairings%
\[
Q^{\prime}\mathcal{R}{}^{r}\times Q^{\prime}\mathcal{R}{}^{d}\rightarrow
Q^{\prime}\mathcal{R}{}^{d}\simeq Q^{\prime}%
\]
are nondegenerate. Now we can apply (\ref{r5a}c) and (\ref{r5a}b).
\end{proof}

\begin{theorem}
[\cite{clozel1999}]\label{r5} For any abelian variety $A$ over $\mathbb{F}{}$,
$\ell$-adic homological equivalence coincides with numerical equivalence on a
set $S$ of primes $\ell$ of density $>0$.
\end{theorem}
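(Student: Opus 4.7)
The plan is to deduce the theorem from Theorem \ref{r3} by a Chebotarev density argument. In Theorem \ref{r3} I take the Weil cohomology to be $H_{W}=H_{\ell}^{\ast}$ with coefficient field $Q=\mathbb{Q}_{\ell}$, $Q_{0}=\mathbb{Q}$, and I let
\[
\mathcal{R}^{\ast} \;\subset\; H_{\ell}^{2\ast}(A)(\ast)
\]
be the graded $\mathbb{Q}$-subalgebra consisting of classes of algebraic cycles modulo $\ell$-adic homological equivalence. The hypotheses of Theorem \ref{r3} other than the splitting-field condition are automatic: $\mathcal{R}^{\ast}$ has finite degree; it contains the divisor classes; it is stable under the endomorphisms defined by Lefschetz correspondences, because such correspondences are themselves algebraic cycles and so send algebraic classes to algebraic classes; and $\dim_{\mathbb{Q}}\mathcal{R}^{\dim A}=1$, since $0$-cycles modulo $\ell$-adic homological equivalence are classified by degree.

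Granted the remaining hypothesis, Theorem \ref{r3} yields the nondegeneracy of the pairing
\[
\mathcal{R}^{r}\times\mathcal{R}^{d-r}\longrightarrow\mathcal{R}^{d}\simeq\mathbb{Q},\qquad d=\dim A.
\]
Unwinding definitions, a class $[Z]\in\mathcal{R}^{r}$ lies in the left radical of this pairing precisely when $\deg(Z\cdot W)=0$ for every algebraic cycle $W$ of complementary codimension, i.e., precisely when $Z$ is numerically trivial. Nondegeneracy therefore forces the tautological surjection
\[
\mathcal{R}^{r}=\mathcal{A}^{r}(A)/{\equiv_{\ell}}\;\twoheadrightarrow\;\mathcal{A}^{r}(A)/{\equiv_{\mathrm{num}}}
\]
to be an isomorphism, which is exactly the statement that $\ell$-adic homological equivalence coincides with numerical equivalence in codimension $r$ on $A$.

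It therefore remains to show that the splitting hypothesis of Theorem \ref{r3}---the existence of a finite Galois extension $Q'/\mathbb{Q}_{\ell}$ splitting $C(A)$ and admitting a $\mathbb{Q}_{\ell}$-automorphism $\iota'$ with $\sigma\circ\iota_{A}=\iota'\circ\sigma$ for every embedding $\sigma\colon C(A)\hookrightarrow Q'$---is satisfied for $\ell$ in a set of positive density. Write $C(A)=\prod_{i}E_{i}$ as a product of CM fields (together with a possible $\mathbb{Q}$-factor for supersingular elliptic factors), let $\tilde E/\mathbb{Q}$ be the Galois closure of $\prod_{i}E_{i}$, and let $c\in\Gal(\tilde E/\mathbb{Q})$ be the element restricting to the intrinsic complex conjugation $\iota_{A}$ on each CM factor. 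Taking $Q'$ to be the completion of $\tilde E$ at a chosen prime $\lambda$ above $\ell$, the required $\iota'\in\Gal(Q'/\mathbb{Q}_{\ell})$ exists exactly when the decomposition group $D_{\lambda}\subset\Gal(\tilde E/\mathbb{Q})$ contains $c$---equivalently, when the Frobenius conjugacy class at $\ell$ in $\Gal(\tilde E/\mathbb{Q})$ meets $\{c\}$. By the Chebotarev density theorem, the set of such primes has positive density $|\mathrm{Cl}(c)|/[\tilde E:\mathbb{Q}]$.

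The chief technical point is the existence of the element $c$ realizing complex conjugation simultaneously on all factors $E_{i}$: this relies on the fact that complex conjugation on a CM field is a canonical central involution of its Galois closure, so the individual complex conjugations patch to a well-defined element of $\Gal(\tilde E/\mathbb{Q})$. With $c$ in hand, Chebotarev produces the positive-density set of $\ell$, and an application of Theorem \ref{r3} for each such $\ell$ completes the proof.
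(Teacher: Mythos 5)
Your proposal is correct and follows essentially the same route as the paper: apply Theorem \ref{r3} to the $\mathbb{Q}$-algebra of algebraic classes in $H_{\ell}^{2\ast}(A)(\ast)$, taking $Q'$ to be the completion at a prime $\lambda\mid\ell$ of the Galois splitting field of $C(A)$ at which the Frobenius element is complex conjugation, with Chebotarev supplying the positive density (the paper leaves the verification of the auxiliary hypotheses and the ``nondegeneracy $\Rightarrow$ hom $=$ num'' step implicit, which you spell out). The only blemish is your word ``equivalently'': having $c$ in the decomposition group is implied by, but not equivalent to, the Frobenius class at $\ell$ being $\{c\}$; since you only use the implication you need, this does not affect the argument.
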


\begin{proof}
Let $\mathcal{R}{}^{\ast}(A)$ be the $\mathbb{Q}{}$-subalgebra of $H_{\ell
}^{\ast}(A)$ generated by the algebraic classes, and let $E\subset\mathbb{C}%
{}$ be the smallest Galois extension of $\mathbb{Q}{}$ splitting $C(A)$. Then
$\sigma\circ\iota_{A}=\iota|E\circ\sigma$ for all homomorphisms $\sigma\colon
C(A)\rightarrow E$. Let $S$ be the set of primes $\ell$ such that $\iota|E$ is
the Frobenius element of some prime $\lambda$ of $E$ dividing $\ell$. Then the
hypotheses of Theorem \ref{r3} hold with $Q^{\prime}=E_{\lambda}$.
\end{proof}

\begin{remark}
\label{r4}Theorem \ref{r5} holds for $A^{n}$ with the \textit{same} set $S$
because $C(A)\simeq C(A^{n})$.
\end{remark}

\begin{aside}
\label{r4a}The proof of Clozel's theorem in this subsection simplifies that of
Deligne (see \cite{clozel2008}), who takes the group $G$ in Lemma \ref{r3} to
be the algebraic subgroup of $\GL(H_{W}^{\ast}(A))$ generated by
$\End(A)^{\times}$ and the group (isomorphic to $\SL_{2}$) given by Lefschetz
theory, and then proves the lemma by an explicit computation.
\end{aside}

\subsection{Quotients of tannakian categories}

I review some definitions and results from \cite{milne2007qtc}. Let $k$ be a
field, and let $\mathsf{T}$ be a tannakian category over $k$. A
\emph{tannakian subcategory} of $\mathsf{T}$ is a full $k$-linear subcategory
closed under the formation of subquotients, direct sums, tensor products, and
duals. In particular, it is \emph{strictly} full (i.e., it contains with any
object, every object in $\mathsf{T}$ isomorphic to the object). For any
subgroup $H$ of the fundamental group $\pi(\mathsf{T})$ of $\mathsf{T}$, the
full subcategory $\mathsf{T}^{H}$ of $\mathsf{T}$ whose objects are those on
which $H$ acts trivially is a tannakian subcategory of $\mathsf{T}$, and every
tannakian subcategory of $\mathsf{T}$ is of this form for a uniquely
determined subgroup of $\pi(\mathsf{T})$.

\textit{For simplicity, I assume throughout this subsection that }$\mathsf{T}%
$\textit{ has a commutative fundamental group. }Then $\pi(\mathsf{T})$ is an
ind-object in the subcategory $\mathsf{T}_{0}$ of $\mathsf{T}$ of trivial
objects (those isomorphic to the a direct sum of copies of the identity object
$\1$), and the equivalence of categories%
\begin{equation}
\Hom(\1,-)\colon\mathsf{T}_{0}\rightarrow\Vc_{k}\label{e019}%
\end{equation}
maps it to a pro-algebraic group in the usual sense. I often write
$\mathsf{T}^{\pi(\mathsf{T}\text{)}}$ or $\mathsf{T}^{\pi}$ for $\mathsf{T}%
_{0}$ and $\gamma^{\mathsf{T}}$ for the functor (\ref{e019}). Note that
$\gamma^{\mathsf{T}}$ is a $k$-valued fibre functor on $\mathsf{T}^{\pi}$ and
that, for any other $k$-valued fibre functor $\omega$ on $\mathsf{T}^{\pi}$,
there is a unique isomorphism $\gamma^{\mathsf{T}}\rightarrow\omega$ (because
$\underline{\Hom}^{\otimes}(\gamma^{\mathsf{T}},\omega)$ is a torsor for the
trivial group).

An exact tensor functor $q\colon\mathsf{T}\rightarrow\mathsf{Q}$ of tannakian
categories is a \emph{quotient functor} if every object of $\mathsf{Q}$ is a
subquotient of an object of the image of $q$. Then the full subcategory
$\mathsf{T}^{q}$ of $\mathsf{T}$ consisting of the objects that become trivial
in $\mathsf{Q}$ is a tannakian subcategory of $\mathsf{T}$, and
$X\rightsquigarrow\Hom(\1,qX)$ is a $k$-valued fibre functor $\omega^{q}$ on
$\mathsf{T}^{q}$. In particular, $\mathsf{T}^{q}$ is neutral. For any $X,Y$ in
$\mathsf{T}$,
\begin{equation}
\Hom(qX,qY)\simeq\omega^{q}(\underline{\Hom}(X,Y)^{H}), \label{e11}%
\end{equation}
where $H$ is the subgroup of $\pi(\mathsf{T})$ corresponding to $\mathsf{T}%
^{q}$. Every $k$-valued fibre functor $\omega_{0}$ on a tannakian subcategory
$\mathsf{S}$ of $\mathsf{T}$ arises from a well-defined quotient
$\mathsf{T}/\omega_{0}$ of $\mathsf{T}$. For example, when $\mathsf{T}$ is
semisimple, we can take $\mathsf{T}/\omega_{0}$ to be the pseudo-abelian hull
of the category with one object $qX$ for each object $X$ of $\mathsf{T}$ and
whose morphisms are given (\ref{e11}).

\begin{plain}
\label{q1}In summary, $(Q,q)\leftrightarrow\omega^{q}$ where\[\bfig
\node a(0,400)[T]
\node b(500,400)[Q]
\node c(0,200)[\rotatebox{90}{$\subset$}]
\node d(500,200)[\rotatebox{90}{$\subset$}]
\node e(1000,0)[\mathsf{Vec}_k]
\node f(0,0)[T^q]
\node g(500,0)[Q^\pi]
\arrow[a`b;{q}]
\arrow/{}/[c`d;{}]
\arrow[f`g;{q|T^q}]
\arrow[g`e;{\gamma^Q}]
\arrow|m|/{@{>}@/_15pt/}/[f`e;{\omega^q}]
\efig
\qquad \Hom(qX,qY)\simeq\omega^{q}(\underline{\Hom}(X,Y)^{H}).\]

\end{plain}

Let $q\colon\mathsf{T}\rightarrow\mathsf{Q}$ be a quotient functor, and let
$R$ be a $k$-algebra. An $R$-valued fibre functor $\omega$ on $\mathsf{Q}$
defines an $R$-valued fibre functor $\omega\circ q$ on $\mathsf{T}$, and the
(unique) isomorphism of fibre functors
\[
\Hom(\1,-)\rightarrow\omega|\mathsf{Q}_{0}%
\]
defines an isomorphism $a(\omega)\colon\omega^{q}\otimes_{k}R\rightarrow
(\omega\circ q)|\mathsf{T}^{q}$. Conversely, an $R$-valued fibre functor
$\omega^{\prime}$ on $\mathsf{T}$ together with an isomorphism $a\colon
\omega^{q}\otimes_{k}R\rightarrow\omega^{\prime}|\mathsf{T}^{q}$ defines a
fibre functor $\omega$ on $\mathsf{Q}$ whose action on objects is determined
by $\omega(qX)=\omega^{\prime}(X)$ and whose action on morphisms is determined
by
\[
\bfig\node a(0,500)[\Hom(qX,qY)]\node b(2000,500)[\Hom(\omega(qX),\omega
(qY))]\node c(0,0)[\omega^{q}(\underline{\Hom}(X,Y)^{H})\otimes R]\node
d(1000,0)[\omega^{\prime}(\underline{\Hom}(X,Y)^{H})]\node
e(2000,0)[\Hom(\omega^{\prime}X,\omega^{\prime}Y)^{\omega^{\prime}%
(H)}]\arrow/-->/[a`b;\omega]\arrow|r|/->/[a`c;(\ref{e11}%
)]\arrow[c`d;a]\arrow[d`e;\simeq]\arrow/^{(}->/[e`b;{}]\efig
\]

\begin{plain}
\label{q2}In summary, $\omega\leftrightarrow(\omega^{\prime},a)$ where \[\bfig
\node a(0,400)[T]
\node b(500,400)[Q]
\node c(0,200)[\bigcup]
\node e(1000,400)[\mathsf{Vec}_k]
\node f(0,0)[T^q]
\arrow[a`b;{q}]
\arrow[b`e;{\omega}]
\arrow|m|/{@{>}@/^15pt/}/[a`e;{\omega^\prime}]
\arrow/{}/[c`e;{}]
\arrow|m|[f`e;{\omega^q}]
\efig
\qquad
\bfig
\morphism(0,200)|a|/{@{>}@/^1em/}/<500,0>[T^q`\mathsf{Vec}_k;\omega^\prime|T^q]
\morphism(0,200)|b|/{@{>}@/_1em/}/<500,0>[T^q`\mathsf{Vec}_k;\omega^q]
\morphism(250,150)|r|/=>/<0,100>[{}`{};a]
\efig
\]

\end{plain}

\section{Rational Tate classes}

Throughout this section, $\mathcal{S}{}$ is a class of smooth projective
varieties over $\mathbb{F}{}$ satisfying the condition (*) (see p\pageref{*})
and containing the abelian varieties. The smallest such class will be denoted
$\mathcal{S}{}_{0}$. Thus, $\mathcal{S}{}_{0}$ consists of all varieties whose
connected components are products of abelian varieties and projective spaces.

\subsection{Definition}

\begin{definition}
\label{r7}A family $(\mathcal{R}{}^{\ast}(X))_{X\in\mathcal{S}{}}$ with each
$\mathcal{R}{}^{\ast}(X)$ a graded $\mathbb{Q}{}$-subalgebra of $H_{\mathbb{A}%
{}}^{2\ast}(X)(\ast)$ is a \emph{theory of rational Tate classes on}
$\mathcal{S}{}$ if it satisfies the following conditions:

\begin{enumerate}
\item[(R1)] for every regular map $f\colon X\rightarrow Y$ of varieties in
$\mathcal{S}{}$, $f^{\ast}$ maps $\mathcal{R}{}^{\ast}(Y)$ into $\mathcal{R}%
{}^{\ast}(X)$ and $f_{\ast}$ maps $\mathcal{R}{}^{\ast}(X)$ into
$\mathcal{R}^{\ast}(Y)$;

\item[(R2)] for every $X$ in $\mathcal{S}{}$, $\mathcal{R}{}^{1}(X)$ contains
the divisor classes;

\item[(R4)] for every prime $l$ (including $l=p$) and every $X$ in
$\mathcal{S}{}$, the projection map $H_{\mathbb{A}{}}^{\ast}(X)\rightarrow
H^{\ast}(X,\mathbb{Q}{}_{l})$ induces an isomorphism $\mathcal{R}{}^{\ast
}(X)\otimes_{\mathbb{Q}}\mathbb{Q}{}_{l}\rightarrow\mathcal{T}{}_{l}^{\ast
}(X)$.
\end{enumerate}
\end{definition}

\noindent Condition (R4) says that $\mathcal{R}{}^{\ast}(X)$ is simultaneously
a $\mathbb{Q}{}$-structure on each of the $\mathbb{Q}{}_{l}$-spaces
$\mathcal{T}{}_{l}^{\ast}(X)$ of Tate classes (including for $l=p$).
\noindent\noindent The elements of $\mathcal{R}{}^{\ast}(X)$ are called the
\emph{rational Tate classes} on $X$ for the theory $\mathcal{R}{}$.

For any $X$ in $\mathcal{S}{}$, let $\mathcal{A}{}^{\ast}(X)$ denote the
$\mathbb{Q}{}$-subalgebra of $H_{\mathbb{A}{}}^{2\ast}(X)(\ast)$ generated by
the algebraic classes. Then $\mathcal{A}{}^{\ast}(X)$ is a graded
$\mathbb{Q}{}$-algebra, and the family $(\mathcal{A}{}^{\ast}(X))_{X\in
\mathcal{S}{}}$ satisfies (R1) and (R2) of the definition. It is a theory of
rational Tate classes on $\mathcal{S}{}$ if the Tate conjecture holds for all
$X\in\mathcal{S}{}$ and numerical equivalence coincides with homological
equivalence for one (hence all) $l$.

\subsection{Properties of a theory of rational Tate classes}

\noindent Let $\mathcal{R}{}^{\ast}$ be a theory of rational Tate classes on
$\mathcal{S}{}$.\vspace{-5pt}\medskip

\begin{plain}
\label{r8} For every $X$ in $\mathcal{S}{}$, $\mathcal{R}{}^{\ast}(X)$ is a
$\mathbb{Q}{}$-algebra of finite degree. \noindent Indeed, for each $l$,
$\mathcal{R}{}^{\ast}(X)$ is a $\mathbb{Q}{}$-structure on the $\mathbb{Q}%
{}_{l}$-algebra $\mathcal{T}{}_{l}^{\ast}(X)$, which has finite degree.
\end{plain}

\begin{plain}
\label{r9} When $X$ is connected, there is a unique isomorphism $\mathcal{R}%
{}^{\dim X}(X)\rightarrow\mathbb{Q}$ sending the class of any point to $1$.
\noindent To see this, note that (R2) implies that $\mathcal{R}{}^{\ast}(X)$
contains all Lefschetz classes, and that the class of every point is
Lefschetz. Now apply (R4) noting that the similar statement is true for
$\mathcal{T}{}_{l}^{\dim X}(X)$ and $\mathbb{Q}{}_{l}$.
\end{plain}

\begin{plain}
\label{r10} For varieties $X,Y\in\mathcal{S}{}$, the maps $X\rightarrow
X\sqcup Y\leftarrow Y$ define an isomorphism%
\begin{equation}
\mathcal{R}{}^{\ast}(X\sqcup Y)\rightarrow\mathcal{R}{}^{\ast}(X)\oplus
\mathcal{R}{}^{\ast}(Y). \label{e10}%
\end{equation}
To see this, note that the isomorphism $H_{\mathbb{A}{}}^{2\ast}(X\sqcup
Y)\rightarrow H_{\mathbb{A}{}}^{2\ast}(X)\oplus H_{\mathbb{A}{}}^{2\ast}(Y)$
induces an injection (\ref{e10}), which becomes an isomorphism when tensored
with $l$.
\end{plain}

\begin{plain}
\label{r11} For any two varieties $X,Y$ in $\mathcal{S}{}$, there is a
$\mathbb{Q}{}$-algebra homomorphism%
\begin{equation}
x\otimes y\mapsto p^{\ast}x\cdot q^{\ast}y\colon\mathcal{R}{}^{\ast}%
(X)\otimes_{\mathbb{Q}{}}\mathcal{R}{}^{\ast}(Y)\rightarrow\mathcal{R}{}%
^{\ast}(X\times Y). \label{e1}%
\end{equation}

\end{plain}

\begin{plain}
\label{r12} A $c\in\mathcal{R}{}^{\dim X+r}(X\times Y)$ defines a linear map
\[
x\mapsto q_{\ast}(p^{\ast}x\cdot c))\colon\mathcal{R}{}^{\ast}(X)\rightarrow
\mathcal{R}{}^{\ast+r}(Y).
\]
In particular, Lefschetz correspondences map rational Tate classes to rational
Tate classes.
\end{plain}

\begin{plain}
\label{r13}Let $L$ be the Lefschetz operator on cohomology defined by a
hyperplane section of $X$. For $2r\leq\dim X$, the map
\[
L^{\dim X-2r}\colon\mathcal{R}{}^{r}(X)\rightarrow\mathcal{R}{}^{\dim X-r}(X)
\]
is injective. It is an isomorphism when $X$ is an abelian variety because then
the inverse map is a Lefschetz correspondence (\cite{milne1999lc}, 5.9).
\end{plain}

\begin{plain}
\label{r14} For any $n$, $\mathcal{R}{}^{\ast}(\mathbb{P}{}^{n})\simeq
\mathbb{Q}{}[t]/(t^{n+1})$ where $t$ denotes the class of any hyperplane in
$\mathbb{P}{}^{n}$, and, for any $X\in\mathcal{S}{}$, the map (\ref{e1}) is an
isomorphism%
\[
x\otimes y\mapsto p^{\ast}x\cdot q^{\ast}y\colon\mathcal{R}{}^{\ast}%
(X)\otimes\mathcal{R}{}^{\ast}(\mathbb{P}{}^{n})\simeq\mathcal{R}{}^{\ast
}(X\times\mathbb{P}{}^{n}).
\]

\end{plain}

\begin{plain}
\label{r15} Let $X$ be connected, and let $R(X)=\mathcal{R}{}^{\dim X}(X\times
X)$. Then $R(X)$ becomes a $\mathbb{Q}{}$-algebra with the product,%
\[
(f,g)\mapsto p_{13\ast}(p_{12}^{\ast}f\cdot p_{23}^{\ast}g)\colon\mathcal{R}%
{}^{\dim X}(X\times X)\times\mathcal{R}{}^{\dim X}(X\times X)\rightarrow
\mathcal{R}{}^{\dim X}(X\times X).
\]
It contains the graph of any regular map $f\colon X\rightarrow X$, and
$f\mapsto cl(\Gamma_{f})\colon\End(X)\rightarrow R(X)$ is a homomorphism. When
$X$ is not connected, we set $R(X)=\prod R(X_{i})$ where the $X_{i}$ are the
connected components of $X$.
\end{plain}

\subsection{Semisimple Frobenius maps}

Let $\mathcal{R}{}^{\ast}$ be a theory of rational Tate classes on
$\mathcal{S}{}$.

Recall that $\pi_{X}$ is the set of Frobenius maps of $X$. For $\pi\in\pi_{X}%
$, $\mathbb{Q}{}[\pi]$ denotes the $\mathbb{Q}{}$-subalgebra of $R(X)$
generated by the graph of $\pi$ (see \ref{r15}). For $N$ sufficiently
divisible, the $\mathbb{Q}{}$-algebra $\mathbb{Q}{}[\pi^{N}]$ depends only on
$\pi_{X}$, and is the algebra of least degree generated by an element of
$\pi_{X}$ --- we denote it $\mathbb{Q}{}\{\pi_{X}\}$. We say that $\pi_{X}$ is
\emph{semisimple}, or that $X$ \emph{has semisimple Frobenius maps}, if
$\mathbb{Q}{}\{\pi_{X}\}$ is semisimple, i.e., a product of fields. When
$\pi_{X}$ is semisimple, the Frobenius maps of $X$ act semisimply on all Weil
cohomology groups of $X$.

Weil (1948\nocite{weil1948}, Th\'{e}or\`{e}me 38) shows that the Frobenius
maps are semisimple if $\mathcal{S}{}=\mathcal{S}{}_{0}$.

\begin{proposition}
\label{r15a}Let $X$ be a connected variety of dimension $d$ in $\mathcal{S}{}%
$. If $\pi_{X}$ is semisimple, then $R(X)\overset{\textup{{\tiny def}}}%
{=}\mathcal{R}{}^{d}(X\times X)$ is a semisimple $\mathbb{Q}{}$-algebra with
centre $\mathbb{Q}{}\{\pi_{X}\}$, and the product pairings%
\begin{equation}
\mathcal{R}{}^{r}(X)\times\mathcal{R}{}^{d-r}(X)\rightarrow\mathcal{R}{}%
^{d}(X)\simeq\mathbb{Q}{} \label{e17}%
\end{equation}
are nondegenerate.
\end{proposition}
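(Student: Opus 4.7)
The plan is to derive the three assertions by combining Theorem~\ref{r6} with the Jannsen-style argument in the proof of Proposition~\ref{r6a}, using semisimplicity of $\pi_X$ to verify the hypotheses. I first note that $X \times X$ is a connected variety in $\mathcal{S}{}$ (which is closed under products), and that $\pi_{X \times X}$ is semisimple whenever $\pi_X$ is, because the tensor product of two semisimple endomorphisms of a characteristic-zero vector space is again semisimple. Thus every step below, carried out for $X$, applies equally to $X \times X$.

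To obtain the nondegeneracy of $\mathcal{R}{}^r \times \mathcal{R}{}^{d-r} \to \mathcal{R}{}^d \simeq \mathbb{Q}{}$, I would verify hypothesis $(\dagger)$ of Theorem~\ref{r6} at a single prime $l$. Fix a Frobenius $\pi \in \pi_X$. By the remark following the definition of semisimplicity (and (\ref{r5e}) for $l=p$), $\pi$ acts semisimply on $H_l^{2r}(X)(r)$, so its minimum polynomial is squarefree and $1$ is not a multiple root. Then (\ref{r5b}) (and (\ref{r5g}) when $l=p$) yields nondegeneracy of the pairing $\mathcal{T}{}_l^r(X) \times \mathcal{T}{}_l^{d-r}(X) \to \mathcal{T}{}_l^d(X) \simeq \mathbb{Q}{}_l$, and (R4) ensures that the image of $\mathcal{R}{}^r$ spans $\mathcal{T}{}_l^r$, so Theorem~\ref{r6} delivers the rational nondegeneracy.

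Applying this to $X \times X$ gives nondegeneracy of the product pairing $R(X) \times R(X) \to \mathcal{R}{}^{2d}(X\times X) \simeq \mathbb{Q}{}$. This is exactly what the Lefschetz-trace argument in the proof of Proposition~\ref{r6a} needs: if $f$ lies in the Jacobson radical of $R(X)$, then $f \circ g$ is nilpotent for every $g \in R(X)$, so the Lefschetz formula forces $\langle f \cdot g^t \rangle = \sum_i (-1)^i \Tr(f\circ g | H_W^i(X)) = 0$; nondegeneracy then gives $f = 0$, so $R(X)$ is semisimple.

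To identify the centre, the inclusion $\mathbb{Q}{}\{\pi_X\} \subseteq Z(R(X))$ is immediate from the fact that every element of $R(X)$ is represented by a cycle defined over some finite subfield $\mathbb{F}{}_{q^m}$, and such a cycle commutes, in the correspondence algebra, with the graph of the $q^{m}$-power Frobenius (and hence with any sufficiently divisible power of Frobenius). For the reverse inclusion, tensor with $\mathbb{Q}{}_l$ and use (R4) to identify $R(X)\otimes_{\mathbb{Q}{}}\mathbb{Q}{}_l \simeq \mathcal{T}{}_l^d(X\times X)$; via Künneth and Poincar\'e duality this is isomorphic as a $\mathbb{Q}{}_l$-algebra under composition to the centralizer of $\pi^N$ in $\End(H_l^*(X))$, for $N$ large enough that the $\pi^N$-invariants have stabilized. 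Semisimplicity of $\pi^N$ and the double centralizer theorem then identify the centre of this centralizer as $\mathbb{Q}{}_l[\pi^N] = \mathbb{Q}{}_l \otimes_{\mathbb{Q}{}}\mathbb{Q}{}\{\pi_X\}$, and faithfully flat descent yields $Z(R(X)) = \mathbb{Q}{}\{\pi_X\}$. I expect the main obstacle to be this last step, particularly at $l=p$, where the coefficient ring of crystalline cohomology is $B(\mathbb{F}{})$ rather than $\mathbb{Q}{}_p$ and one must pass from $F$-invariants to $\pi$-invariants via (\ref{r5f}) before invoking the double centralizer theorem.
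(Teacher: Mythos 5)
Your argument is correct, but it takes a partly different and somewhat more roundabout route than the paper. For the nondegeneracy of (\ref{e17}) the paper does not invoke Theorem \ref{r6}: since (R4) already makes $\mathcal{R}{}^{d-r}(X)$ a $\mathbb{Q}{}$-structure on $\mathcal{T}{}_{l}^{d-r}(X)$, semisimplicity gives nondegeneracy of the pairing on Tate classes (\ref{r5b}, \ref{r5g}), and a single application of (\ref{r5a}c) then kills the left kernel of (\ref{e17}); your detour through the hypothesis $(\dagger)$ of Theorem \ref{r6} proves the same thing with heavier machinery. For the semisimplicity of $R(X)$ the paper argues directly at one prime $\ell\neq p$: K\"unneth and Poincar\'e duality identify $H_{\ell}^{2d}(X\times X)(d)$ with the algebra of graded endomorphisms of $H_{\ell}^{\ast}(X)$, the Tate classes $\mathcal{T}{}_{\ell}^{d}(X\times X)$ form the centralizer of $\mathbb{Q}{}_{\ell}[\pi]$ there, and the double centralizer theorem gives at once that this centralizer is semisimple with centre $\mathbb{Q}{}_{\ell}[\pi]$; descent via (R4) then yields both semisimplicity and the centre of $R(X)$ in one stroke. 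Your Jannsen/Lefschetz-trace argument (via nondegeneracy on $X\times X$, using that $\pi_{X\times X}$ is semisimple --- which is correct, since Frobenius acts on $H^{\ast}(X\times X)\simeq H^{\ast}(X)\otimes H^{\ast}(X)$ as $\pi\otimes\pi$) is valid, but it is logically redundant: the centralizer computation you carry out anyway for the centre already gives semisimplicity. Two small repairs: elements of $R(X)$ are rational Tate classes, not cycles ``defined over a finite subfield'', so the inclusion $\mathbb{Q}{}\{\pi_{X}\}\subset Z(R(X))$ should instead be justified by noting that each element of $R(X)$ is an $\ell$-adic Tate class, hence fixed by a power of Frobenius, which acts on $\End(H_{\ell}^{\ast}(X))$ by conjugation, so the element commutes with that power (in fact, once you know $Z(R(X))\otimes_{\mathbb{Q}{}}\mathbb{Q}{}_{\ell}=\mathbb{Q}{}_{\ell}[\pi^{N}]=\mathbb{Q}{}\{\pi_{X}\}\otimes_{\mathbb{Q}{}}\mathbb{Q}{}_{\ell}$ inside $R(X)\otimes_{\mathbb{Q}{}}\mathbb{Q}{}_{\ell}$, the equality of the two $\mathbb{Q}{}$-subspaces follows automatically). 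Finally, the difficulty you anticipate at $l=p$ does not arise: the centre is a $\mathbb{Q}{}$-algebra, so it suffices to compute it after tensoring with $\mathbb{Q}{}_{\ell}$ for a single $\ell\neq p$, which is exactly what the paper does.
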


\begin{proof}
Fix an $\ell\neq p$, and let $\pi$ be a Frobenius element of $X$ such that
$\mathbb{Q}{}\{\pi_{X}\}=\mathbb{Q}{}[\pi]$. The K\"{u}nneth formula and the
Poincar\'{e} duality theorem give an isomorphism%
\[
H_{\ell}^{2d}(X\times X)(d)\simeq\End(H_{\ell}^{\ast}(X))
\]
(endomorphisms of $H_{\ell}^{\ast}(X)$ as a graded $\mathbb{Q}{}_{\ell}%
$-vector space), and the centralizer of $\mathbb{Q}{}_{\ell}[\pi]$ in this
$\mathbb{Q}{}_{\ell}$-algebra is $\mathcal{T}{}_{\ell}^{d}(X\times X)$.
Because $\mathbb{Q}{}[\pi]$ is semisimple, so also is $\mathbb{Q}{}_{\ell}%
[\pi]$, and it follows that $\mathcal{T}{}_{\ell}^{d}(X\times X)$ is a
semisimple $\mathbb{Q}{}_{\ell}$-algebra with centre $\mathbb{Q}{}_{\ell}%
[\pi]$. As $R(X)\otimes\mathbb{Q}{}_{\ell}\simeq\mathcal{T}{}_{\ell}%
^{d}(X\times X)$, it follows that $R(X)$ is semisimple with centre
$\mathbb{Q}{}[\pi]$.

The semisimplicity of $\pi_{X}$ implies that the pairings $\mathcal{T}{}%
_{l}^{r}(X)\times\mathcal{T}_{l}^{d-r}(X)\rightarrow\mathcal{T}{}_{l}%
^{d}\simeq\mathbb{Q}{}_{l}$ are nondegenerate (see \ref{r5b}), and so an
element of the left kernel of the pairing (\ref{e17}) maps to zero in
$\mathcal{T}{}_{l}^{\ast}(X)\subset H_{l}^{2\ast}(X)(\ast)$ for all $l$ (apply
\ref{r5a}c).
\end{proof}

\subsection{The Lefschetz standard conjecture}

Let $H_{W}$ be a Weil cohomology theory on the algebraic varieties over
$\mathbb{F}{}$, and let $(\mathcal{R}{}^{\ast}(X))_{X\in\mathcal{S}{}}$ be a
family of graded $\mathbb{Q}{}$-subalgebras of the $Q$-algebras $H_{W}^{2\ast
}(X)(\ast)$ satisfying (R1, R2, R4) --- we call this a \emph{theory of
rational Tate classes for} $H_{W}$. Let $X\in\mathcal{S}{}$ be connected, and
let $L$ be the Lefschetz operator defined by a smooth hyperplane section of
$X$. The following statements are the analogues for rational Tate classes of
the various forms of Grothendieck's Lefschetz standard conjecture
(\cite{grothendieck1968, kleiman1968, kleiman1994}):

\label{standard}

\begin{description}
\item[$A(X)$\textnf{:}] for $2r\leq d=\dim X$, $L^{d-2r}\colon\mathcal{R}%
{}^{r}(X)\rightarrow\mathcal{R}{}^{d-r}(X)$ is an isomorphism;

\item[$B(X)$\textnf{:}] the Lefschetz operator $\Lambda$ lies in
$\mathcal{R}{}^{\ast}(X\times X)$;

\item[$C(X)$\textnf{:}] the projectors $H_{W}^{\ast}(X)\rightarrow H_{W}%
^{i}(X)\subset H_{W}^{\ast}(X)$ lie in $\mathcal{R}{}^{\ast}(X\times X)$;

\item[$D(X)$\textnf{:}] the pairings $\mathcal{R}{}^{r}(X)\times\mathcal{R}%
{}^{d-r}(X)\rightarrow\mathcal{R}{}^{d}(X)\simeq\mathbb{Q}{}$ are nondegenerate.
\end{description}

\begin{theorem}
If statement $D(X)$ holds for all $X\in\mathcal{S}{}$, then $\pi_{X}$ is
semisimple for all $X\in\mathcal{S}{}$. Conversely, if $\pi_{X}$ is semisimple
for all $X\in\mathcal{S}{}$, then $A(X)$, $B(X)$, $C(X)$, and $D(X)$ hold for
all $X\in\mathcal{S}{}$ and all $L$.
\end{theorem}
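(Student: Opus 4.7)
The theorem splits into a forward and a converse direction.

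\emph{Forward: $D(X)$ for all $X\in\mathcal{S}$ implies $\pi_X$ is semisimple for all $X\in\mathcal{S}$.} I would run the Jannsen-style trace-formula argument of the proof of Proposition~\ref{r6a} on $X\times X\in\mathcal{S}$. Here $D(X\times X)$ supplies the nondegenerate $\mathbb{Q}$-bilinear pairing $\mathcal{R}^d(X\times X)\times\mathcal{R}^d(X\times X)\to\mathbb{Q}$; combined with the Lefschetz trace formula, the observation that $\langle f\cdot g^t\rangle=\sum_i(-1)^i\Tr(f\circ g\mid H_W^i)=0$ whenever $f$ lies in the Jacobson radical of $R(X)=\mathcal{R}^d(X\times X)$ (since $f\circ g$ is then nilpotent) forces $R(X)$ to be a semisimple $\mathbb{Q}$-algebra. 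Tensoring with $\mathbb{Q}_\ell$ and invoking (R4) identifies $R(X)\otimes\mathbb{Q}_\ell$ with the centralizer $Z_{\End(H_\ell^*(X))}(\pi)$, exactly as in the proof of Proposition~\ref{r15a}. The elementary linear-algebra fact that the centralizer of an endomorphism is semisimple iff the endomorphism itself is semisimple then yields semisimplicity of $\pi$, hence of $\mathbb{Q}\{\pi_X\}$.

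\emph{Converse: $\pi_X$ semisimple for all $X\in\mathcal{S}$ implies $A$, $B$, $C$, $D$.} Statement $D(X)$ is exactly Proposition~\ref{r15a}. For $A(X)$, hard Lefschetz is unconditional in $\ell$-adic and crystalline cohomology and commutes with Frobenius, so $L^{d-2r}$ restricts to an isomorphism $\mathcal{T}_l^r\to\mathcal{T}_l^{d-r}$, and by (R4) this descends to $\mathcal{R}^r(X)\to\mathcal{R}^{d-r}(X)$. For $C(X)$, by the Riemann hypothesis the eigenvalues of $\pi$ on $H_W^i(X)$ have absolute value $q^{i/2}$, so the minimum polynomials of $\pi|_{H_W^i}$ have pairwise disjoint root sets in $\overline{\mathbb{Q}}$ and are squarefree (by semisimplicity of $\pi$); the Chinese Remainder Theorem in $\mathbb{Q}[T]$ produces $P_i\in\mathbb{Q}[T]$ with $P_i(\pi)$ equal to the K\"{u}nneth projector $\pi_i$, and since the graph of Frobenius lies in $R(X)=\mathcal{R}^d(X\times X)$ (see~\ref{r15}), so does each $\pi_i$.

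\emph{The delicate case $B(X)$.} The task is to promote the Lefschetz operator $\Lambda$ on $H_W^*(X)$ to an element of the $\mathbb{Q}$-subspace $\mathcal{R}^{d-1}(X\times X)$, rather than merely to each $\mathcal{R}^{d-1}\otimes\mathbb{Q}_l$ in turn. I would work entirely inside the tannakian category: with $C$ and $D$ in hand, Theorem~\ref{r6t} makes $\Mot(\mathbb{F})$ a semisimple tannakian category with fibre functor $\omega_W$, and $\omega_W$ being faithful exact, the hard Lefschetz morphisms $L^{d-i}\colon h^i(X)\to h^{2d-i}(X)(d-i)$, which $\omega_W$ sends to isomorphisms, are already isomorphisms in $\Mot$. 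Their inverses, the primitive sub-motives $\Ker(L^{d-i+1})\subset h^i(X)$ they cut out, and the resulting Lefschetz decomposition all live inside $\Mot$; the standard $\mathfrak{sl}_2$-formula then produces the lowering operator as a morphism $\Lambda\colon h(X)\to h(X)(-1)$ in $\Mot$, which is by construction an element of $\Hom_{\Mot}(h(X),h(X)(-1))=\mathcal{R}^{d-1}(X\times X)$.

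The main obstacle is precisely this last descent: the existence and uniqueness of $\Lambda$ on cohomology are standard, but the $\mathbb{Q}$-rationality of the resulting correspondence does not obviously follow from its $l$-adic pieces, and seems to require the full tannakian framework of Theorem~\ref{r6t} (or, equivalently, an explicit Kleiman-style expression for $\Lambda$ in terms of $L$, the K\"{u}nneth projectors, and the inverses of hard Lefschetz).
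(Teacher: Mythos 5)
Your proof is correct, and the skeleton coincides with the paper's in two places: the forward direction is the same Jannsen-style trace argument via \ref{r6a} (the paper then concludes by noting that $\mathbb{Q}\{\pi_X\}$ sits inside the centre of the semisimple algebra $R(X)$, whereas you pass through the equivalent centralizer criterion ``$Z(\pi)$ semisimple iff $\pi$ semisimple'', which is fine in characteristic zero since the nilpotent part of $\pi$ is a polynomial in $\pi$ and hence a central nilpotent of the centralizer), and $D(X)$ in the converse is obtained exactly as in \ref{r15a}. Where you genuinely diverge is in $A$, $B$, $C$: the paper disposes of these with a single citation to Kleiman (1994, 4-1, 5-1), namely that $D(X)$ for all $X$ in a class satisfying (*) implies $A$, $B$, $C$, while you prove them in-house. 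Your $C$ is the Katz--Messing argument, which is in fact the paper's own Proposition \ref{r19}; your $A$ descends the Frobenius-equivariant hard Lefschetz isomorphism on Tate classes through (R4) (and, as you implicitly show, needs neither $D$ nor semisimplicity); and your $B$ transplants Kleiman's construction of $\Lambda$ into the category furnished by Theorem \ref{r6t}, using that the faithful exact fibre functor reflects isomorphisms so that hard Lefschetz, the primitive decomposition, and hence $\Lambda\in\Hom_{\Mot}(hX,hX(-1))=\mathcal{R}^{d-1}(X\times X)$ all exist motivically. The trade-off is the expected one: the paper's route is shorter and leans on the standard-conjectures literature, while yours is self-contained, makes visible exactly where semisimplicity is used (only through $D$ via \ref{r15a} and in enabling \ref{r6t}), and correctly identifies the only delicate point --- the $\mathbb{Q}$-rationality of $\Lambda$ --- which your tannakian (equivalently, explicit Kleiman-style) construction does resolve, since $\Lambda$ is assembled from $L$, the K\"unneth projectors, and inverses of morphisms that are already isomorphisms in $\Mot$.
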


\begin{proof}
Statement $D(X)$ implies that the $\mathbb{Q}{}$-algebra $R(X)\overset
{\textup{{\tiny def}}}{=}\mathcal{R}{}^{\dim X}(X\times X)$ is semisimple (see
\ref{r6a}), and therefore its centre $\mathbb{Q}{}\{\pi_{X}\}$ is semisimple.
Conversely, as in (\ref{r15a}), the semisimplicity of $\mathbb{Q}{}\{\pi
_{X}\}$ implies that $D(X)$ holds, and it is known that if $D(X)$ holds for
all $X$ in a set $\mathcal{S}{}$ satisfying (*), then so do $A(X)$, $B(X)$,
and $C(X)$ (e.g., \cite{kleiman1994}, 4-1, 5-1).
\end{proof}

\subsection{The category of motives for rational Tate classes}

Let $\mathcal{R}{}^{\ast}$ be a theory of rational Tate classes on
$\mathcal{S}{}$. As in \S 1, the category of correspondences $\mathsf{C}%
(\mathbb{F}{})$ has one object $hX$ for each $X\in\mathcal{S}{}$, and the
morphisms from $X$ to $Y$ are the elements of $\mathcal{R}{}^{\dim X}(X\times
Y)$.

\begin{proposition}
\label{r16}If $\pi_{X}$ is semisimple for every $X\in\mathcal{S}{}$, then the
pseudo-abelian hull of $\mathsf{C}(\mathbb{F}{})$ is a semisimple abelian category.
\end{proposition}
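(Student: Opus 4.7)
The plan is to follow the Jannsen-style strategy already used in the proof of Proposition \ref{r6a}, with the key semisimplicity and nondegeneracy inputs supplied by Proposition \ref{r15a} instead of by a tautological hypothesis. The hypothesis that $\pi_X$ is semisimple for every $X\in\mathcal{S}{}$ is exactly what is needed to feed \ref{r15a} for each connected $X\in\mathcal{S}{}$; applying it gives, simultaneously, that $R(X)\overset{\textup{{\tiny def}}}{=}\mathcal{R}{}^{\dim X}(X\times X)$ is a semisimple $\mathbb{Q}{}$-algebra and that the product pairings
\[
\mathcal{R}{}^{r}(X)\times\mathcal{R}{}^{\dim X-r}(X)\longrightarrow\mathcal{R}{}^{\dim X}(X)\simeq\mathbb{Q}{}
\]
are nondegenerate.

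Next I would extend both conclusions to possibly disconnected $X\in\mathcal{S}{}$. Because $\mathcal{S}{}$ is closed under passage to connected components, this is immediate: decomposing $X=\bigsqcup X_{i}$ into its connected components and using \ref{r10} (together with the convention in \ref{r15}) gives $R(X)\simeq\prod_{i}R(X_{i})$, a finite product of semisimple $\mathbb{Q}{}$-algebras, hence still semisimple of finite $\mathbb{Q}{}$-degree.

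Now turn to the pseudo-abelian hull $\mathsf{C}(\mathbb{F}{})^{+}$. Its objects are pairs $(hX,e)$ with $e$ an idempotent in $R(X)$, and
\[
\End_{\mathsf{C}(\mathbb{F}{})^{+}}((hX,e))\;=\;e\cdot R(X)\cdot e,
\]
which is a corner of a semisimple $\mathbb{Q}{}$-algebra of finite degree, hence itself a semisimple $\mathbb{Q}{}$-algebra of finite degree. Thus $\mathsf{C}(\mathbb{F}{})^{+}$ is a $\mathbb{Q}{}$-linear pseudo-abelian category in which every endomorphism algebra is semisimple of finite degree.

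The proof concludes by invoking the same formal result used at the end of the proof of \ref{r6a}, namely Lemma 2 of \cite{jannsen1992}: a $\mathbb{Q}{}$-linear pseudo-abelian category whose endomorphism algebras are all semisimple of finite degree is automatically a semisimple abelian category. The only place where real content (as opposed to bookkeeping) enters is in establishing the semisimplicity of $R(X)$ and the nondegeneracy of the product pairings; both of these are already handled, for connected $X$, by \ref{r15a}, which in turn rests on the interplay between the $\ell$-adic picture, Poincar\'{e} duality, and the semisimplicity hypothesis on $\pi_{X}$ via \ref{r5b}. So the main obstacle has effectively been cleared in advance, and the present proposition is essentially a formal consequence.
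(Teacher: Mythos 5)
Your proposal is correct and follows essentially the same route as the paper: apply Proposition \ref{r15a} to get semisimplicity of $R(X)$ (hence of the corners $e\cdot R(X)\cdot e$, as in the proof of \ref{r6a}), and then invoke Lemma 2 of \cite{jannsen1992}. The extra remarks on disconnected $X$ and the nondegenerate pairings are harmless bookkeeping that the paper leaves implicit.
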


\begin{proof}
For $X\in\mathcal{S}{}$, $\End(hX)\simeq R(X)^{\mathrm{opp}}$, which
Proposition \ref{r15a} shows to be semisimple. Thus, the semisimplicity of the
Frobenius elements implies that the endomorphism algebras of the objects of
$\mathsf{C}(\mathbb{F}{})$ are semisimple $\mathbb{Q}{}$-algebras of finite
degree, and so $\mathsf{C}(\mathbb{F}{})$ is a semisimple abelian category by
\cite{jannsen1992}, Lemma 2.
\end{proof}

\begin{proposition}
\label{r19}For every $X$ in $\mathcal{S}{}$, the K\"{u}nneth components of the
diagonal are rational Tate classes.
\end{proposition}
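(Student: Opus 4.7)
The plan is to realise each Künneth component of $\Delta_{X}$ as a $\mathbb{Q}{}$-polynomial in the class of the graph of a Frobenius endomorphism. Since by (\ref{r15}) that graph lies in the $\mathbb{Q}{}$-subalgebra $R(X)=\mathcal{R}{}^{\dim X}(X\times X)$ of $H_{\mathbb{A}{}}^{2\dim X}(X\times X)(\dim X)$, so does every such polynomial, and this will finish the proof. By (\ref{r10}) we may assume $X$ is connected.

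Fix a model $X_{1}/\mathbb{F}{}_{q}$ of $X$ and let $\pi\in\pi_{X}$ be the associated $q$-power Frobenius endomorphism. By (\ref{r15}) the class $\gamma:=cl(\Gamma_{\pi})$ lies in $R(X)$, and $\gamma^{n}=cl(\Gamma_{\pi^{n}})$; hence $P(\gamma)\in R(X)$ for every $P(T)\in\mathbb{Q}{}[T]$. For each $l$, the Künneth formula together with Poincaré duality identifies $H_{l}^{2\dim X}(X\times X)(\dim X)$ with the algebra of endomorphisms of $H_{l}^{\ast}(X)$ (over $\mathbb{Q}{}_{l}$ when $l\neq p$, over $B(\mathbb{F}{})$ when $l=p$) in a way that turns composition of correspondences into composition of endomorphisms. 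Under this identification $\gamma$ maps to the action of $\pi$ on $H_{l}^{\ast}(X)$, and $P(\gamma)$ maps to $P(\pi)$.

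For $i=0,\ldots,2\dim X$, let $P^{i}(T)=\det(1-\pi T\mid H_{l}^{i}(X))$. By \cite{katzM1974}, $P^{i}\in\mathbb{Q}{}[T]$ and is the same for every $l$ (including $l=p$). By the Riemann hypothesis over finite fields (Weil, Deligne), the inverse roots of $P^{i}$ are algebraic numbers of complex absolute value $q^{i/2}$; in particular $P^{i}$ and $P^{j}$ have no common root for $i\neq j$ and are therefore coprime in $\mathbb{Q}{}[T]$. The Chinese Remainder Theorem then yields polynomials $Q_{i}(T)\in\mathbb{Q}{}[T]$, only finitely many nonzero, satisfying
\[
Q_{i}\equiv 1 \pmod{P^{i}},\qquad Q_{i}\equiv 0 \pmod{P^{j}}\ \text{for }j\neq i.
\]

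Then $Q_{i}(\gamma)\in\mathcal{R}{}^{\dim X}(X\times X)$, and in every cohomology theory $H_{l}$ its image is the endomorphism $Q_{i}(\pi)$ of $H_{l}^{\ast}(X)$, which by construction is the identity on $H_{l}^{i}(X)$ and zero on $H_{l}^{j}(X)$ for $j\neq i$---that is, the $i$-th Künneth projector. Since this holds simultaneously for all $l$, the single adelic class $Q_{i}(\gamma)$ is the $i$-th Künneth component of $\Delta_{X}$ in $H_{\mathbb{A}{}}^{2\dim X}(X\times X)(\dim X)$. There is no substantial obstacle once Katz--Messing and Weil II are in hand; in particular, we do not need $\pi_{X}$ to be semisimple, because the coprimality driving the Chinese Remainder step comes from the distinct Weil weights $q^{i/2}$ and not from any hypothesis on the minimal polynomial of $\pi$.
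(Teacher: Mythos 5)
Your argument is in substance identical to the paper's: the paper simply cites \cite{katzM1974}, Theorem 2, whose content is exactly your claim that the K\"unneth projectors are $\mathbb{Q}$-polynomials in the Frobenius class, and then uses (\ref{r15}) as you do; you have merely unwound the proof of that cited theorem (Katz--Messing rationality and $l$-independence, distinctness of weights, CRT interpolation), and you are right that no semisimplicity of $\pi_X$ is needed. One small correction: take $P^{i}(T)=\det(T-\pi\mid H_{l}^{i}(X))$ rather than $\det(1-\pi T\mid H_{l}^{i}(X))$, because Cayley--Hamilton gives $P^{i}(\pi)=0$ on $H^{i}_l(X)$ only for the former (the reversed polynomial evaluated at $\pi$ is in fact invertible on $H^{i}_l(X)$ for $i>0$); the roots of the genuine characteristic polynomials still have pairwise distinct absolute values $q^{i/2}$, so the coprimality and the CRT step go through verbatim and $Q_{i}(\gamma)$ is then indeed the $i$-th K\"unneth component.
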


\begin{proof}
In fact, they are polynomials in the graph of the Frobenius map with rational
coefficients (see, for example, \cite{katzM1974}, Theorem 2).
\end{proof}

The category of motives $\Mot(\mathbb{F}{})$ is obtained from $\mathsf{C}%
(\mathbb{F}{})$ by passing to the pseudo-abelian hull, inverting the Lefschetz
object, and using (\ref{r19}) to change the commutativity constraint. When the
Frobenius elements are semisimple, the article \cite{milne1994} can be
rewritten with the algebraic classes replaced by rational Tate classes. In
particular, we have the following result.

\begin{theorem}
\label{r20}If the Frobenius maps of the varieties in $\mathcal{S}{}$ are
semisimple, then the category $\Mot(\mathbb{F}{})$ is a semisimple tannakian
category over $\mathbb{Q}{}$ with fundamental group $P$, the Weil-number
protorus. For each $l$ (including $l=p$), $l$-adic cohomology defines a fibre
functor $\omega_{l}$ on $\Mot(\mathbb{F}{})$.
\end{theorem}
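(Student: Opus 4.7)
The plan is to deduce the theorem from Theorem \ref{r6t} applied to each $l$-adic cohomology, and then to identify the fundamental group with $P$ by translating the arguments of \cite{milne1994} into the rational Tate class setting, as the surrounding text indicates.

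First I would verify the hypotheses of Theorem \ref{r6t}. For each prime $l$ (including $l=p$), the composition $\mathcal{R}^{*}(X)\hookrightarrow\mathcal{T}_{l}^{*}(X)\subset H_{l}^{2*}(X)(*)$ coming from (R4) lets us view $\mathcal{R}^{*}$ as a family of graded $\mathbb{Q}$-subalgebras of $H_{l}^{2*}(X)(*)$, so we are in the framework of \S 1.2 with $Q_{0}=\mathbb{Q}$ and $Q=\mathbb{Q}_{l}$ (resp.\ $B(\mathbb{F})$). Conditions (R0)--(R2) of Section 1.2 hold by (R1)--(R2) of Definition \ref{r7} together with \ref{r9}. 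Under the semisimplicity hypothesis on $\pi_{X}$, Proposition \ref{r15a} furnishes the nondegeneracy of the product pairings $\mathcal{R}^{r}(X)\times\mathcal{R}^{d-r}(X)\to\mathcal{R}^{d}(X)\simeq\mathbb{Q}$, and Proposition \ref{r19} places the K\"unneth components of the diagonal in $\mathcal{R}$. Theorem \ref{r6t} then gives that $\Mot(\mathbb{F})$ is a semisimple tannakian category over $\mathbb{Q}$ with the fibre functor $\omega_{l}\colon h(X,e,m)\rightsquigarrow e(H_{l}^{*}(X))(m)$. Since the construction of $\Mot(\mathbb{F})$ (pseudo-abelian hull, inversion of $\mathbb{L}$, K\"unneth-modified commutativity) uses only $\mathcal{R}$ and \ref{r19}, the category is intrinsic to the theory $\mathcal{R}$, and each $l$ yields a fibre functor on this single category.

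For the identification $\pi(\Mot(\mathbb{F}))=P$, I would follow \cite{milne1994} verbatim with algebraic classes replaced by rational Tate classes. The inputs needed, all now available, are: (i) Proposition \ref{r15a}, showing $R(X)$ is semisimple with centre $\mathbb{Q}\{\pi_{X}\}$, so that the simple summands of $hX$ are parameterized by the minimal factors of $\mathbb{Q}\{\pi_{X}\}$, i.e.\ by Galois orbits of Weil numbers; (ii) the $l$-independence of the Frobenius characteristic polynomial on $H_{l}^{*}(X)$ (\cite{katzM1974}), which makes the Weil-number labelling $\omega_{l}$-independent; (iii) the tensor-compatibility from \ref{r11}, under which multiplication of Weil numbers corresponds to the tensor product of motives; and (iv) semisimplicity of $\Mot(\mathbb{F})$ from Proposition \ref{r16}. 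Tannakian reconstruction applied to $\omega_{l}$ then identifies $\underline{\Aut}^{\otimes}(\omega_{l})$ (after descent to $\mathbb{Q}$) as the protorus whose character group consists of Weil $p^{n}$-numbers modulo the standard equivalence $\pi\sim\pi^{m}$, which is by definition $P$.

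The main obstacle is the last step, the explicit identification with $P$: unpacking the Tannakian dictionary for a pro-system of tori requires careful bookkeeping of fields of definition and of the transition maps between different $\mathbb{F}_{q}$-models of $X$, exactly as in \cite{milne1994}. This is the only substantive piece of work; verifying the tannakian framework itself via Theorem \ref{r6t} is essentially a matter of assembling the ingredients of \ref{r15a}, \ref{r16}, and \ref{r19}. Because every structural input to the \cite{milne1994} proof transfers without change to our setting under the semisimplicity assumption, the adaptation is expository rather than conceptual.
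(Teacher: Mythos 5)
Your proposal matches the paper's intended argument: the paper itself gives no detailed proof, simply assembling Propositions \ref{r15a}, \ref{r16}, and \ref{r19} with the tannakian machinery of \S 1 and then asserting that \cite{milne1994} can be rewritten with algebraic classes replaced by rational Tate classes, which is precisely your route via Theorem \ref{r6t} plus the deferred identification of the fundamental group with $P$. Your extra bookkeeping (injectivity of $\mathcal{R}^{\ast}\rightarrow\mathcal{T}_{l}^{\ast}$ from (R4), $l$-independence via Katz--Messing, and the role of Honda--Tate implicit in the character-group computation) is exactly what the citation to \cite{milne1994} is meant to supply, so the approaches are essentially the same.
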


We recall the definition of the Weil-number torus $P$. An algebraic number
$\pi$ is said to be a \emph{Weil }$p^{n}$\emph{-number of weight }$m$ if, for
every embedding $\sigma\colon\mathbb{Q}{}[\pi]\rightarrow\mathbb{C}{}$,
$|\sigma\pi|=\left(  p^{n}\right)  ^{m/2}$ and, for some $N$, $p^{N}\pi$ is an
algebraic integer. Let $W(p^{n})$ be the set of Weil $p^{n}$-numbers (of any
weight) in $\mathbb{Q}{}^{\mathrm{al}}$. Then $W(p^{n})$ is a commutative
group, stable under the action of $\Gal(\mathbb{Q}{}^{\mathrm{al}}%
/\mathbb{Q}{})$. Let $W(p^{\infty})=\varinjlim_{n}W(p^{n})$. It is a torsion
free commutative group with an action of $\Gal(\mathbb{Q}{}^{\mathrm{al}%
}/\mathbb{Q}{})$, and $P$ is defined to be the protorus over $\mathbb{Q}{}$
with character group $X^{\ast}(P)=W(p^{\infty})$.

\begin{corollary}
\label{r26}If the Frobenius maps of the varieties in $\mathcal{S}{}$ are
semisimple, then for any theory of rational Tate classes on a class
$\mathcal{S}{}$, the functor%
\[
e\cdot hX(m)\mapsto e\cdot hX(m)\colon\Mot(\mathbb{F}{};\mathcal{S}{}%
_{0})\rightarrow\Mot(\mathbb{F}{};\mathcal{S}{})
\]
is an equivalence of tensor categories.
\end{corollary}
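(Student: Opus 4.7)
The plan is to verify that the natural tensor functor
$$
F\colon\Mot(\mathbb{F}{};\mathcal{S}{}_{0})\to\Mot(\mathbb{F}{};\mathcal{S}{})
$$
is both fully faithful and essentially surjective. The given theory of rational Tate classes on $\mathcal{S}{}$ restricts to one on $\mathcal{S}{}_{0}$, and both categories of motives are built from the corresponding correspondence category by the same formal procedure: pseudo-abelian hull, inversion of the Lefschetz object $\mathbb{L}{}$, and the K\"unneth modification of the commutativity constraint (legitimate on both sides by Proposition \ref{r19}). Weil's theorem gives semisimplicity of the Frobenius maps on $\mathcal{S}{}_{0}$, and semisimplicity on $\mathcal{S}{}$ is the standing hypothesis, so Theorem \ref{r20} applies to both categories: each is a semisimple tannakian category over $\mathbb{Q}{}$ with fundamental group the Weil-number protorus $P$.

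Fully faithfulness is essentially built in. For $X,Y\in\mathcal{S}{}_{0}$ the morphism set $\Hom(hX,hY)$ in each correspondence category is literally $\mathcal{R}{}^{\dim X}(X\times Y)$; since $X\times Y\in\mathcal{S}{}_{0}$ and the theory on $\mathcal{S}{}$ restricts to that on $\mathcal{S}{}_{0}$, these sets agree. The passage from the correspondence category to $\Mot$ only produces Hom-sets of the form $f\circ\Hom(hX,hY)\circ e$ (plus Tate twists), so $F$ is fully faithful on every pair of objects of the form $e\cdot hX(m)$ with $X\in\mathcal{S}{}_{0}$. Compatibility with the tensor structure is similarly immediate, since $h(X\times Y)=hX\otimes hY$ is defined identically in both categories.

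For essential surjectivity, since $\Mot(\mathbb{F}{};\mathcal{S}{})$ is semisimple and $F$ preserves direct sums and Tate twists, it suffices to show that every simple object of $\Mot(\mathbb{F}{};\mathcal{S}{})$ is isomorphic to $F(S_{0})$ for some simple $S_{0}$ in $\Mot(\mathbb{F}{};\mathcal{S}{}_{0})$. Through any fibre functor $\omega_{l}$, the simple objects of either semisimple tannakian category are indexed by the irreducible $\mathbb{Q}{}$-rational representations of the common fundamental group $P$, hence by Galois orbits in $X^{\ast}(P)=W(p^{\infty})$. By Honda--Tate, every Galois orbit of Weil $p^{n}$-numbers is realized as the Frobenius eigenvalue on $H^{1}$ of some simple abelian variety $A$ over $\mathbb{F}{}_{p^{n}}$, producing the required simple motive inside $\Mot(\mathbb{F}{};\mathcal{S}{}_{0})$. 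The main obstacle is precisely this essential-surjectivity step, which rests on two non-formal inputs: Theorem \ref{r20}'s identification of the fundamental group as $P$ for both categories (so that their simples are indexed by the same set) and Honda--Tate to supply, for each index, a representative lying in $\mathcal{S}{}_{0}$. With these in hand, a fully faithful semisimple tensor functor that hits every simple is automatically an equivalence of tensor categories.
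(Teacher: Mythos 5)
Your strategy is sound but takes a genuinely different route from the paper, which disposes of the corollary in one line: the functor is an exact tensor functor of tannakian categories inducing an isomorphism on fundamental groups (both are $P$ by Theorem \ref{r20}, identified compatibly because both identifications are built from the Frobenius correspondences, which the functor preserves), and such a functor is automatically an equivalence. You instead verify full faithfulness and essential surjectivity by hand. Your full faithfulness argument is fine and more elementary than the paper's: the Hom-sets $f\circ\mathcal{R}^{\ast}(X\times Y)\circ e$ are literally the same on both sides since $X\times Y\in\mathcal{S}_{0}$. Two caveats on your essential-surjectivity step, both repairable. First, neither category is known to be neutral over $\mathbb{Q}$ (the $\omega_{l}$ are only $\mathbb{Q}_{l}$-valued), so you cannot literally say the simples are the irreducible $\mathbb{Q}$-rational representations of $P$; what is true, and suffices, is that each simple object determines a Galois orbit of characters in $X^{\ast}(P)=W(p^{\infty})$ (those through which $P$ acts on it), and two simples with the same orbit are isomorphic, since then $\underline{\Hom}(X,Y)^{P}$ is a nonzero trivial object and $\Hom(X,Y)=\Hom(\1,\underline{\Hom}(X,Y))\neq0$. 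Second, Honda--Tate as you invoke it only realizes orbits of weight-one integral Weil numbers on $H^{1}$ of abelian varieties, whereas $X^{\ast}(P)$ contains Weil numbers of all weights, so Honda--Tate alone does not hit every orbit. The cleaner and sufficient input is one you already cite: Theorem \ref{r20} applied to $\mathcal{S}_{0}$ says the fundamental group of $\Mot(\mathbb{F};\mathcal{S}_{0})$ is all of $P$, which forces every Galois orbit in $X^{\ast}(P)$ to occur in some object of $\Mot(\mathbb{F};\mathcal{S}_{0})$ (otherwise $P$ would act through a proper quotient); combined with the compatibility of the two identifications with $P$ under the functor (the orbit of a motive is read off from Frobenius eigenvalues, which the functor preserves), this yields essential surjectivity. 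With these two repairs your argument is correct; it is in effect an unwinding of the tannakian one-liner the paper uses, and in both approaches the real weight is carried by Theorem \ref{r20}.
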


\begin{proof}
It is an exact tensor functor of tannakian categories over $\mathbb{Q}{}$ that
induces an isomorphism on the fundamental groups.
\end{proof}

\begin{corollary}
\label{r27}If the Frobenius maps of the varieties in $\mathcal{S}{}$ are
semisimple, a theory of rational Tate classes on $\mathcal{S}{}$ is determined
by its values on the objects in $\mathcal{S}{}_{0}$.
\end{corollary}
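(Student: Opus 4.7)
The strategy is to bootstrap from Corollary r26, which makes the motive category on $\mathcal{S}$ equivalent to the motive category on $\mathcal{S}_0$, and then to compute rational Tate classes on a general $X\in\mathcal{S}$ by transporting via a motivic isomorphism to an object of $\mathcal{S}_0$.

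Let $\mathcal{R}_1,\mathcal{R}_2$ be two theories on $\mathcal{S}$ whose restrictions to $\mathcal{S}_0$ coincide, and write $\Mot_i$ for the motive category built from $\mathcal{R}_i$. Because the two theories are identical on $\mathcal{S}_0$, the subcategory $\Mot_0:=\Mot(\mathbb{F};\mathcal{S}_0)\subset\Mot_i$ is intrinsic, with the same morphisms, tensor products, and ad\`elic fibre functor $\omega_{\mathbb{A}}$ for either $i$. By r26, both inclusions $\Mot_0\hookrightarrow\Mot_i$ are equivalences of tannakian categories, with common fundamental group $P$ (the Weil-number protorus of r20).

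For any $X\in\mathcal{S}$, essential surjectivity of r26 yields $M\in\Mot_0$ and isomorphisms $\phi_i\colon h(X)\xrightarrow{\sim}M$ in $\Mot_i$. Since $\Mot_0$ is a semisimple tannakian category in which an object is determined up to isomorphism by its $\omega_l$-realization as a $P$-representation, and this realization equals the intrinsic datum $H_l^{\ast}(X)$ with its Frobenius action (independent of $i$), the same $M$ serves for both $i=1,2$. Full faithfulness of r26 then gives
\[
\mathcal{R}_i^r(X)=\Hom_{\Mot_i}(\1,h(X)(r))\xrightarrow[\sim]{\phi_i}\Hom_{\Mot_i}(\1,M(r))=\Hom_{\Mot_0}(\1,M(r))=\mathcal{R}_0^r(M),
\]
and passing to $\omega_{\mathbb{A}}$ identifies each $\mathcal{R}_i^r(X)\subset H_{\mathbb{A}}^{2r}(X)(r)$ with the common $\mathbb{Q}$-subspace $\mathcal{R}_0^r(M)\subset\omega_{\mathbb{A}}(M(r))$, via the two a priori distinct isomorphisms $\omega_{\mathbb{A}}(\phi_1)$ and $\omega_{\mathbb{A}}(\phi_2)$.

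The main obstacle, and the only step requiring care, is to show that these two identifications produce the \emph{same} $\mathbb{Q}$-subspace of $H_{\mathbb{A}}^{2r}(X)(r)$; equivalently, that the composite $g:=\omega_{\mathbb{A}}(\phi_1)\omega_{\mathbb{A}}(\phi_2)^{-1}\in\Aut_{\mathbb{A}}(\omega_{\mathbb{A}}(M))$ preserves $\mathcal{R}_0^r(M)$ elementwise. Because each $\omega_{\mathbb{A}}(\phi_i)$ is the realization of a morphism in $\Mot_i$, it intertwines the $P$-actions on $H_{\mathbb{A}}^{\ast}(X)$ (coming from Frobenius, intrinsic to $X$) and on $\omega_{\mathbb{A}}(M)$ (intrinsic to $M\in\Mot_0$); hence $g$ is $P$-equivariant and arises from an $\mathbb{A}$-point of $\underline{\Aut}^{\otimes}(\omega_{\mathbb{A}}|_{\Mot_0})$. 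Now for any tensor automorphism $g$ of a fibre functor $\omega$, naturality applied to $\alpha\colon\1\to M(r)$ and the identity $g_{\1}=\id_{\omega(\1)}$ forces $g_{M(r)}\circ\omega(\alpha)=\omega(\alpha)$; thus $g$ fixes $\omega_{\mathbb{A}}(\alpha)$ for every $\alpha\in\Hom_{\Mot_0}(\1,M(r))$, i.e.\ it fixes $\mathcal{R}_0^r(M)$ pointwise. The two pullbacks therefore coincide, giving $\mathcal{R}_1^r(X)=\mathcal{R}_2^r(X)$ as required.
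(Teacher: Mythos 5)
Your overall strategy is the same as the paper's: use Corollary \ref{r26} to replace $h^{2r}X$ by an object $M$ of $\Mot(\mathbb{F};\mathcal{S}_0)$ and recover $\mathcal{R}^r(X)$ as the image of $\Hom(\1,M(r))$ under the realization of the chosen isomorphism. You are also right to isolate, as the only delicate point, the comparison of the two identifications coming from the two theories $\mathcal{R}_1,\mathcal{R}_2$ (the paper's own proof is silent on this). The problem is that your resolution of that point does not work. You argue: $g=\omega_{\mathbb{A}}(\phi_1)\omega_{\mathbb{A}}(\phi_2)^{-1}$ commutes with the Frobenius/$P$-action, \emph{hence} it arises from an $\mathbb{A}$-point of $\underline{\Aut}^{\otimes}(\omega_{\mathbb{A}}|_{\Mot_0})$, hence it fixes the realization of $\Hom(\1,M(r))$ pointwise. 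The middle inference is a non sequitur. The group $\underline{\Aut}^{\otimes}(\omega_{\mathbb{A}}|_{\Mot_0})$ is (the realization of) the fundamental group $P$ itself, whereas $P$-equivariance of an automorphism of the single realization $\omega_{\mathbb{A}}(M)$ only places $g$ in the \emph{centralizer} of the image of $P$ in $\GL(\omega_{\mathbb{A}}(M))$, which is in general far larger. The failure is most visible exactly where it matters: the subspace $\mathbb{A}\cdot S$, $S$ the realization of $\Hom(\1,M(r))$, is the $P$-invariant part of $\omega_{\mathbb{A}}(M(r))$, so \emph{every} $\mathbb{A}$-linear automorphism of it is $P$-equivariant, while elements of $\omega_{\mathbb{A}}(P)(\mathbb{A})$ act trivially on it. (Already for $M(r)=\1\oplus\1$, $P$-equivariance allows all of $\GL_2(\mathbb{A})$, which certainly does not preserve the $\mathbb{Q}$-structure $\mathbb{Q}^2\subset\mathbb{A}^2$.) So $P$-equivariance gives no control of $g$ on the $P$-invariants, and neither $g(S)=S$ nor the stronger pointwise fixing follows. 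Your final observation (a tensor automorphism of a fibre functor fixes $\omega(\alpha)$ for every $\alpha\colon\1\to M(r)$) is correct, but it rests on the unjustified step.

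Note also that the statement you need, $g(S)=S$, is essentially \emph{equivalent} to the conclusion $\mathcal{R}_1^r(X)=\mathcal{R}_2^r(X)$: by fullness, $\omega_{\mathbb{A}}(\phi_i)$ carries $\mathcal{R}_i^r(X)$ onto $S$, so $g(S)=\omega_{\mathbb{A}}(\phi_1)(\mathcal{R}_2^r(X))$. Thus all the content of the corollary is concentrated in this one step, and it cannot be obtained from Frobenius-equivariance alone; one can even check that $g$ is the realization of a correspondence whose $l$-components are Tate classes, hence lies in $\End_{\Mot_0}(M)\otimes_{\mathbb{Q}}\mathbb{A}$, but an $\mathbb{A}$-point of the automorphism group of $M$ in $\Mot_0$ still need not stabilize the $\mathbb{Q}$-structure $S$. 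To close the gap you would have to show that the two isomorphisms can be chosen (or corrected) so that their difference is motivic over $\mathbb{Q}$, i.e.\ lies in $\Aut_{\Mot_0}(M)(\mathbb{Q})\cdot\omega_{\mathbb{A}}(P)(\mathbb{A})$, or argue by some other mechanism (for instance along the lines of Theorem \ref{r6}, comparing the two $\mathbb{Q}$-structures inside all the $\mathcal{T}_l^r(X)$ simultaneously); as written, the proposal does not do this.
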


\begin{proof}
Let $X\in\mathcal{S}{}$, and choose an isomorphism $x\rightarrow h^{2r}X$ with
$x$ in $\Mot(\mathbb{F}{},\mathcal{S}{}_{0})$. The isomorphism $\omega
_{\mathbb{A}{}}(x)(r)\rightarrow\omega_{\mathbb{A}{}}(h^{2r}X)(r)\overset
{\textup{{\tiny def}}}{=}H_{\mathbb{A}{}}^{2r}(X)(r)$ maps
\[
\Hom(\1,x(r))\subset\Hom(\mathbb{A},\omega_{\mathbb{A}{}}(x)(r))=\omega
_{\mathbb{A}{}}(x)(r)
\]
onto $\mathcal{R}{}^{r}(X)$.
\end{proof}

\subsubsection{The category of motives as a quotient category}

In this subsubsection, we assume that the Frobenius maps are semisimple for
the varieties in $\mathcal{S}{}$. 

Let $\LMot(\mathbb{F})$ be the category of motives based on $\mathcal{S}{}%
_{0}$ using the Lefschetz classes as correspondences. It is a semisimple
tannakian category over $\mathbb{Q}{}$ (\cite{milne1999lm}). There is a
natural action of $P$ on the objects of $\LMot(\mathbb{F})$.

\begin{proposition}
\label{r25}For any theory of rational Tate classes on $\mathcal{S}{}$, the
natural functor
\[
q\colon\LMot(\mathbb{F})\rightarrow\Mot(\mathbb{F}{}),\quad e\cdot
hX(m)\rightsquigarrow e\cdot hX(m),
\]
is a quotient functor, and
\begin{equation}
\LMot(\mathbb{F})^{q}=\LMot(\mathbb{F})^{P}.\label{e12}%
\end{equation}
Conversely, every quotient functor $q\colon\LMot(\mathbb{F})\rightarrow
\mathsf{M}$ satisfying (\ref{e12}) and such that each standard fibre functor
factors through $q$ arises from a unique theory of rational Tate classes on
$\mathcal{S}{}_{0}$.
\end{proposition}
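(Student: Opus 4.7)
I split the statement into the forward implication (every theory yields a quotient satisfying the conditions) and the converse (every such quotient comes from a theory), then observe that the two constructions are mutually inverse.

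\emph{Forward direction.} The plan is to invoke Corollary \ref{r26} to see that $q$ hits enough objects: every object of $\Mot(\mathbb{F}{})$ is of the form $e\cdot hX(m)$ with $X\in\mathcal{S}{}_{0}$, and such an object is a direct summand of $q(hX(m))$, so $q$ is a quotient functor. For the identification $\LMot(\mathbb{F}{})^{q}=\LMot(\mathbb{F}{})^{P}$, Theorem \ref{r20} gives $\pi(\Mot(\mathbb{F}{}))=P$; the induced inclusion $P\hookrightarrow\pi(\LMot(\mathbb{F}{}))$ is (by construction, in both cases the action of Weil numbers on cohomology) the natural $P$-action on $\LMot(\mathbb{F}{})$, and the objects on which it acts trivially are exactly those becoming trivial in $\Mot(\mathbb{F}{})$.

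\emph{Converse direction.} Given a quotient functor $q\colon\LMot(\mathbb{F}{})\to\mathsf{M}{}$ satisfying (\ref{e12}) and through which each standard fibre functor $\omega_{l}$ factors as $\omega_{l}^{\mathsf{M}{}}\circ q$, I apply the quotient--fibre functor dictionary \ref{q1}--\ref{q2}: $\mathsf{M}{}$ has fundamental group $P$, carries a canonical $\mathbb{Q}{}$-valued fibre functor $\omega^{q}=\Hom_{\mathsf{M}{}}(\1,q-)$ on $\LMot(\mathbb{F}{})^{P}$, and carries $\mathbb{Q}{}_{l}$-valued fibre functors $\omega_{l}^{\mathsf{M}{}}$ computing $l$-adic cohomology. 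For $X\in\mathcal{S}{}_{0}$, I define
\[
\mathcal{R}{}^{r}(X)=\Hom_{\mathsf{M}{}}\bigl(\1,qh^{2r}X(r)\bigr),
\]
and embed this $\mathbb{Q}{}$-vector space into $H_{\mathbb{A}{}}^{2r}(X)(r)$ via the combined $\omega_{l}^{\mathsf{M}{}}$. Conditions (R1) and (R2) are immediate: (R1) from functoriality of $q$, and (R2) from the fact that divisor classes already appear as morphisms in $\LMot(\mathbb{F}{})$. The crucial condition (R4) requires $\mathcal{R}{}^{r}(X)\otimes\mathbb{Q}{}_{l}\xrightarrow{\sim}\mathcal{T}{}_{l}^{r}(X)$; tannakian reconstruction applied to $(\mathsf{M}{},\omega_{l}^{\mathsf{M}{}})$ gives the left side as $(H_{l}^{2r}(X)(r))^{P}$, and I must identify this with the union of Frobenius-fixed subspaces defining $\mathcal{T}{}_{l}^{r}(X)$.

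\emph{Uniqueness and inverseness.} Both constructions fit into the bijection $(\mathsf{M}{},q)\leftrightarrow\omega^{q}$ of \ref{q1}: a theory $\mathcal{R}{}$ on $\mathcal{S}{}_{0}$ tautologically determines $\omega^{q}$ on the generators $hX(m)$ of $\LMot(\mathbb{F}{})^{P}$, and conversely. The main obstacle is the comparison $(H_{l}^{2r}(X)(r))^{P}=\mathcal{T}{}_{l}^{r}(X)$ needed in (R4): the inclusion $\subseteq$ is clear because every Frobenius map of $X$ defines an element of $P(\mathbb{Q}{})$, while the reverse inclusion rests on the assertion that, as the model of $X$ over a finite subfield of $\mathbb{F}{}$ varies, the corresponding Frobenius elements Zariski-generate the full image of $P$ in $\GL(H_{l}^{2r}(X)(r))$. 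This is the density statement about Weil numbers that lies at the heart of the whole tannakian setup for $\Mot(\mathbb{F}{})$; modulo it, the remaining verifications are formal applications of the dictionary developed in the previous subsection.
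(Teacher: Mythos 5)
Your proposal follows essentially the same route as the paper's proof: the forward direction is treated as the formal part, and for the converse you define $\mathcal{R}^{r}(X)=\Hom_{\mathsf{M}}(\1,qh^{2r}X(r))$, embed it into $H_{\mathbb{A}}^{2r}(X)(r)$ via the factored standard fibre functors, check (R1) and (R2) directly, and derive (R4) from (\ref{e12}). The one ingredient you leave "modulo it" --- that the $P$-invariants of $H_{l}^{2r}(X)(r)$ are exactly the Tate classes --- is the same standard fact the paper itself relies on without proof (it is stated explicitly in the proof of Theorem \ref{r21}), so this is not a genuine gap.
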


\begin{proof}
The first statement is obvious. Conversely, for each $x$ and $y$ in
$\LMot(\mathbb{F})$, the map%
\[
\Hom(x,y)\otimes_{\mathbb{Q}{}}\mathbb{Q}{}_{l}\overset{\omega_{l}%
}{\longrightarrow}\Hom(\omega_{l}(x),\omega_{l}(y))
\]
is injective (\cite{deligne1990}, 2.13). In particular, for each
$X\in\mathcal{S}_{0}{}$, $\omega_{l}$ defines an inclusion%
\[
\Hom(\1,h^{2r}X(r))\hookrightarrow\Hom(\mathbb{Q}{}_{l},H_{l}^{2r}%
(X)(r))\simeq H_{l}^{2r}(X)(r)
\]
for $l\neq p$, and similarly for $p$. On combining these maps, we get an
inclusion%
\[
\Hom(\1,h^{2r}X(r))\hookrightarrow H_{\mathbb{A}{}}^{2r}(X)(r)
\]
for each $X\in\mathcal{S}{}_{0}$, and we define $\mathcal{R}{}^{r}(X)$ to be
the image of this map. The family $(\mathcal{R}{}^{\ast}(X))_{X\in
\mathcal{S}{}_{0}}$ with $\mathcal{R}{}^{\ast}(X)=\bigoplus\nolimits_{r}%
\mathcal{R}{}^{r}(X)$ satisfies (R1,R2), and (\ref{e12}) implies that it
satisfies (R4).
\end{proof}

\begin{corollary}
\label{r28}To give a theory of rational Tate classes on $\mathcal{S}{}_{0}$ is
the same as to give a $\mathbb{Q}{}$-structure on the restriction of
$\omega_{\mathbb{A}{}}$ to $\LMot(\mathbb{F})^{P}$, i.e., a subfunctor
$\omega_{0}\subset\omega_{\mathbb{A}{}}$ such that $\mathbb{A}\otimes
_{\mathbb{Q}{}}{}\omega_{0}(x)\simeq\omega_{\mathbb{A}{}}(x)$ for all
$x\in\LMot(\mathbb{F})^{P}$.
\end{corollary}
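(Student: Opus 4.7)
The plan is to recognize this as a direct consequence of Proposition \ref{r25} combined with the general tannakian formalism recalled in \ref{q1} and \ref{q2}. By \ref{r25}, giving a theory of rational Tate classes on $\mathcal{S}{}_{0}$ amounts to giving a quotient functor $q\colon\LMot(\mathbb{F})\rightarrow\mathsf{M}$ such that $\LMot(\mathbb{F})^{q}=\LMot(\mathbb{F})^{P}$ and through which each standard fibre functor (equivalently, $\omega_{\mathbb{A}{}}$) factors. So it suffices to show that such pairs $(\mathsf{M},q)$ correspond bijectively to $\mathbb{Q}{}$-structures on $\omega_{\mathbb{A}{}}|\LMot(\mathbb{F})^{P}$.

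First I would invoke \ref{q1}: a quotient functor $q$ of $\LMot(\mathbb{F})$ with $\LMot(\mathbb{F})^{q}=\LMot(\mathbb{F})^{P}$ is determined, up to equivalence, by the $\mathbb{Q}{}$-valued fibre functor $\omega^{q}\colon\LMot(\mathbb{F})^{P}\rightarrow\Vc_{\mathbb{Q}{}}$ given by $X\rightsquigarrow\Hom(\1,qX)$. Conversely, every such $\mathbb{Q}{}$-valued fibre functor arises from a (unique) quotient.

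Next I would apply \ref{q2} with $k=\mathbb{Q}{}$ and $R=\mathbb{A}{}$: to give a factorization of $\omega_{\mathbb{A}{}}$ through $q$ is the same as to give an isomorphism
\[
a\colon\omega^{q}\otimes_{\mathbb{Q}{}}\mathbb{A}{}\longrightarrow\omega_{\mathbb{A}{}}|\LMot(\mathbb{F})^{P}.
\]
Such a pair $(\omega^{q},a)$ identifies $\omega^{q}$ with a subfunctor $\omega_{0}\subset\omega_{\mathbb{A}{}}|\LMot(\mathbb{F})^{P}$ satisfying $\mathbb{A}{}\otimes_{\mathbb{Q}{}}\omega_{0}(x)\simeq\omega_{\mathbb{A}{}}(x)$ for every $x\in\LMot(\mathbb{F})^{P}$, i.e., with a $\mathbb{Q}{}$-structure on $\omega_{\mathbb{A}{}}|\LMot(\mathbb{F})^{P}$.

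Chaining these two bijections yields the claimed correspondence. The only point that might deserve a verification (rather than an obstacle) is that the $\mathbb{Q}{}$-structure $\omega_{0}$ constructed from a theory $\mathcal{R}{}^{\ast}$ is literally the one whose value on $h^{2r}X(r)$ is $\mathcal{R}{}^{r}(X)\subset H_{\mathbb{A}{}}^{2r}(X)(r)$; this is exactly how $\mathcal{R}{}^{r}(X)$ is recovered from $q$ in the proof of \ref{r25}, so the two directions of the bijection are mutually inverse by construction. No further calculation is needed.
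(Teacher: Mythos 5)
Your argument is correct and is exactly what the paper's terse proof (``Obvious from the above'') intends: it chains Proposition \ref{r25} with the quotient-category dictionary of \ref{q1} and \ref{q2}, identifying the quotient functor with the fibre functor $\omega^{q}$ on $\LMot(\mathbb{F})^{P}$ and the factorization of $\omega_{\mathbb{A}}$ with the isomorphism $a\colon\omega^{q}\otimes_{\mathbb{Q}}\mathbb{A}\rightarrow\omega_{\mathbb{A}}|\LMot(\mathbb{F})^{P}$, i.e.\ with a $\mathbb{Q}$-structure. Your closing check that the $\mathbb{Q}$-structure recovered from a theory $\mathcal{R}^{\ast}$ has value $\mathcal{R}^{r}(X)$ on $h^{2r}X(r)$ matches the construction in the proof of \ref{r25}, so nothing is missing.
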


\begin{proof}
Obvious from the above.
\end{proof}

\section{Good theories of rational Tate classes}

In this section, ${}\mathcal{S}{}$ consists of the varieties over
$\mathbb{F}{}$ whose Frobenius elements are semisimple. Clearly $\mathcal{S}%
{}$ satisfies the condition (*), and includes $\mathcal{S}{}_{0}$ (by a
theorem of Weil). Conjecturally, $\mathcal{S}{}$ includes all varieties over
$\mathbb{F}{}$.

\subsection{Definition}

An abelian variety with sufficiently many endomorphisms over an algebraically
closed field of characteristic zero will be called a \emph{CM abelian
variety}. Let $\mathbb{Q}{}^{\mathrm{al}}$ be the algebraic closure of
$\mathbb{Q}{}$ in $\mathbb{C}{}$. The functor $A\rightsquigarrow
A_{\mathbb{C}{}}$ from CM abelian varieties over $\mathbb{Q}{}^{\mathrm{al}}$
to CM abelian varieties over $\mathbb{C}{}$ is an equivalence of categories
(see, for example, \cite{milneCM}, \S 7).

Fix a $p$-adic prime $w$ of $\mathbb{Q}{}^{\mathrm{al}}$, and let
$\mathbb{F}{}$ be its residue field. Thus, $\mathbb{F}{}$ is an algebraic
closure of $\mathbb{F}{}_{p}$. It follows from the theory of N\'{e}ron models
that there is a well-defined reduction functor $A\rightsquigarrow A_{0}$
sending a CM abelian variety over $\mathbb{Q}{}^{\mathrm{al}}$ to an abelian
variety over $\mathbb{F}{}$ (\cite{serreT1968}, Theorem 6).

For a variety $X$ over an algebraically closed field of characteristic zero,
we write%
\[
H_{\mathbb{A}{}}^{\ast}(X)=\left(  \varprojlim_{m}H^{\ast}(X_{\mathrm{et}%
},\mathbb{Z}{}/m\mathbb{Z}{})\otimes_{\mathbb{Z}{}}\mathbb{Q}{}\right)  \times
H_{\mathrm{dR}}^{\ast}(X)\text{,}%
\]
and for a variety $X_{0}$ over $\mathbb{F}{}$, we now write%
\[
H_{\mathbb{A}}^{\ast}(X_{0})=\left(  \varprojlim_{p\nmid m}H^{\ast
}(X_{0\mathrm{et}},\mathbb{Z}{}/m\mathbb{Z}{})\otimes_{\mathbb{Z}{}}%
\mathbb{Q}{}\right)  \times H_{p}^{\ast}(X_{0})\otimes_{B(\mathbb{F)}%
}\mathbb{Q}{}_{w}^{\mathrm{al}},
\]
where $\mathbb{Q}{}_{w}^{\mathrm{al}}$ is the completion of $\mathbb{Q}%
{}^{\mathrm{al}}$ at $w$. If $X$ has good reduction to $X_{0}$ at $w$, then%
\begin{align*}
H^{\ast}(X_{\mathrm{et}},\mathbb{Z}{}/m\mathbb{Z}{})  &  \simeq H^{\ast
}(X_{0\mathrm{et}},\mathbb{Z}{}/m\mathbb{Z}{})\text{ for all }m\text{ not
divisible by }p\text{, and}\\
H_{\mathrm{dR}}^{\ast}(X)\otimes_{\mathbb{Q}{}^{\mathrm{al}}}\mathbb{Q}{}%
_{w}^{\mathrm{al}}  &  \simeq H_{p}^{\ast}(X_{0})\otimes_{B(\mathbb{F}{}%
)}\mathbb{Q}{}_{w}^{\mathrm{al}},
\end{align*}
and so there is a canonical map $H_{\mathbb{A}{}}^{\ast}(X)\rightarrow
H_{\mathbb{A}{}}^{\ast}(X_{0})$, called the \emph{specialization map}.

For a variety $X$ over a field of characteristic zero, $\mathcal{B}{}^{\ast
}(X)$ denotes the $\mathbb{Q}{}$-subalgebra of absolute Hodge classes in
$H_{\mathbb{A}{}}^{2\ast}(X)(\ast)$. Because of Deligne's theorem
(\citeyear{deligne1982}), I refer to the absolute Hodge classes on a variety
$X\in\mathcal{S}{}_{0}$ simply as Hodge classes.

\begin{definition}
\label{r29}A theory of rational Tate classes $\mathcal{R}{}$ on $\mathcal{S}%
{}$ (over $\mathbb{F}{}$) is \emph{good} if

\begin{enumerate}
\item[(R3)] for all CM abelian varieties $A$ over $\mathbb{Q}{}^{\mathrm{al}}%
$, the Hodge classes on $A$ map to elements of $\mathcal{R}{}^{\ast}(A_{0})$
under the specialization map $H_{\mathbb{A}{}}^{2\ast}(A)(\ast)\rightarrow
H_{\mathbb{A}{}}^{2\ast}(A_{0})(\ast)$.
\end{enumerate}
\end{definition}

\noindent In other words, (R3) requires that there exists a commutative diagam%
\[
\renewcommand{\arraystretch}{1.3}%
\begin{array}
[c]{ccc}%
\mathcal{B}{}^{\ast}(A) & \subset & H_{\mathbb{A}{}}^{2\ast}(A)(\ast)\\
\downarrow &  & \phantom{ab}\downarrow\scriptstyle{\text{specialization}}\\
\mathcal{R}{}^{\ast}(A) & \subset & H_{\mathbb{A}{}}^{2\ast}(A_{0})(\ast).
\end{array}
\]
Recall (\cite{deligne1982}, 2.9b) that $\Gal(\mathbb{Q}{}^{\mathrm{al}%
}/\mathbb{Q}{})$ acts on $\mathcal{B}{}^{\ast}(A)$ through a finite quotient,
and so the Hodge classes on $A$ are Tate classes. Therefore, they specialize
to Tate classes on $A_{0}$, i.e., there is a commutative diagram%
\begin{equation}
\renewcommand{\arraystretch}{1.3}%
\begin{array}
[c]{ccc}%
\mathcal{B}{}^{\ast}(A) & \subset & H_{l{}}^{2\ast}(A)(\ast)\\
\downarrow &  & \phantom{\scriptstyle \simeq}\downarrow{\scriptstyle\simeq}\\
\mathcal{T}_{l}^{\ast}(A) & \subset & H_{l{}}^{2\ast}(A_{0})(\ast)
\end{array}
\label{e19}%
\end{equation}
for each $l$, except that for $l=p$ the cohomology groups have to be tensored
with $\mathbb{Q}{}_{w}^{\mathrm{al}}$.

\subsection{The fundamental theorems}

\begin{theorem}
\label{r21}A family $(\mathcal{R}{}^{\ast}(X))_{X\in\mathcal{S}{}_{0}}$ is a
good theory of rational Tate classes on $\mathcal{S}{}_{0}$ if it satisfies
the conditions (R1), (R2), and (R3), and the following weakening of (R4):

\begin{enumerate}
\item[(R4*)] for all varieties $X$ in $\mathcal{S}{}$, the $\mathbb{Q}{}%
$-algebra $\mathcal{R}{}^{\ast}(X)$ is of finite degree, and for all primes
$l$, the projection map $H_{\mathbb{A}{}}^{2\ast}(X)(\ast)\rightarrow
H_{l}^{2\ast}(X)(\ast)$ sends $\mathcal{R}{}^{\ast}(X)$ into $\mathcal{T}%
{}_{l}^{\ast}(X)$.
\end{enumerate}
\end{theorem}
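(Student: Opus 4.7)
The plan is to deduce (R4) from (R1)--(R3) and (R4*) by applying Theorem \ref{r6} at one prime $\ell \neq p$, and then letting that theorem propagate the conclusion to every $l$. Condition (R4*) supplies exactly what Theorem \ref{r6} requires as input: $\mathcal{R}^*(X)$ has finite degree and maps into $\mathcal{T}_l^*(X)$; combined with (R2), which puts the class of any point in $\mathcal{R}^{\dim X}(X)$ as a top Lefschetz class, the orientation restricts to $\langle \mathcal{R}^d\rangle \subset \mathbb{Q}$. Thus the task reduces, for each connected $X \in \mathcal{S}_0$, to verifying the two halves of hypothesis ($\dagger$) at one fixed $\ell \neq p$: nondegeneracy of Poincar\'e duality on $\mathcal{T}_\ell^*(X)$, and spanning of $\mathcal{T}_\ell^*(X)$ over $\mathbb{Q}_\ell$ by the image of $\mathcal{R}^*(X)$.

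The nondegeneracy half is automatic on $\mathcal{S}_0$: by Weil's theorem, Frobenius acts semisimply on the cohomology of any product of abelian varieties and projective spaces, and Proposition \ref{r5b} then gives that $1$ is not a multiple root of its minimum polynomial on $H_\ell^{2r}(X)(r)$, which is equivalent to nondegeneracy of the pairing on $\pi$-invariants. For the spanning half at $\ell$, I would use the K\"unneth formula to reduce to $X = A_0$, an abelian variety over $\mathbb{F}$ (the Tate classes on the projective factor are polynomials in the hyperplane class, which lies in $\mathcal{R}^*$ by (R2)). By Tate's theorem for abelian varieties over finite fields, $\mathcal{T}_\ell^*(A_0)$ is generated by the classes of algebraic cycles. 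By Honda--Tate, $A_0$ and any auxiliary product $A_0^n$ is isogenous to the reduction of a CM abelian variety $A$ over $\mathbb{Q}^{\mathrm{al}}$; hypothesis (R3) then places the specialized Hodge classes on such CM lifts inside $\mathcal{R}^*(A_0)$, and (R1) transports them through isogenies and morphisms among products.

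The main obstacle is showing that the classes thus produced --- divisor classes via (R2), specialized Hodge classes via (R3), closed under the functorialities of (R1) --- span every algebraic class on $A_0$ after extension to $\mathbb{Q}_\ell$. I would attack this by a tannakian argument in the spirit of \cite{milne1999lm}: compare the Lefschetz motives with the CM motives over $\mathbb{F}$, invoke Deligne's theorem that Hodge classes on abelian varieties are absolute, and handle classes arising from exotic phenomena (for example quaternionic endomorphisms of supersingular factors that do not lift to a single CM variety) by passing to carefully chosen auxiliary products and their CM lifts. A helpful simplification is that for the application of Theorem \ref{r6} it suffices that the image of $\mathcal{R}^*$ \emph{contain} a spanning set, not that it coincide with one. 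Once ($\dagger$) is verified at $\ell$, Theorem \ref{r6} delivers the isomorphism $\mathcal{R}^*(X) \otimes_{\mathbb{Q}} \mathbb{Q}_l \simeq \mathcal{T}_l^*(X)$ at every prime $l$, including $l = p$, which is precisely (R4).
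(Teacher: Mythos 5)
There is a genuine gap, and it sits exactly at the step you label the ``spanning half'' of ($\dagger$). You invoke ``Tate's theorem for abelian varieties over finite fields'' to assert that $\mathcal{T}_{\ell}^{\ast}(A_{0})$ is generated by algebraic cycle classes. Tate's theorem gives this only for divisor classes (codimension $1$, equivalently homomorphisms of abelian varieties); in higher codimension the statement is the Tate conjecture itself, which is open and is precisely what this whole theory is designed to avoid assuming. (If it were available, the algebraic classes would already form a theory of rational Tate classes and Theorem \ref{r21} would be pointless.) On abelian varieties with supersingular or more general CM factors there are ``exotic'' Tate classes not in the algebra generated by divisor classes, and no known algebraicity result covers them; so your route to spanning collapses, and the subsequent tannakian sketch (``compare Lefschetz and CM motives, handle exotic classes by auxiliary products'') is exactly the part that needs a proof, not a gesture.

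The paper's proof fills this hole by a group-theoretic argument that never mentions algebraic cycles. Fix a CM field $K$ and a CM abelian variety $A$ over $\mathbb{Q}^{\mathrm{al}}$ split by $K$, so large that every abelian variety over $\mathbb{F}$ split by $K$ is an isogeny factor of a power of $A_{0}$. First, Theorem \ref{r3} (the Clozel-type argument of Theorem \ref{r5}, using a prime $\ell$ at which complex conjugation on the splitting field is a Frobenius element) gives nondegeneracy of the pairings on the \emph{image of} $\mathcal{R}^{\ast}$ in $H_{\ell}$ --- note this is a different and harder statement than the nondegeneracy on $\mathcal{T}_{\ell}$ that you correctly derive from Weil semisimplicity, and it is what the tannakian machinery (Proposition \ref{r6a}, Corollary \ref{r6c}) actually requires. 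Then one lets $G$ be the largest subgroup of $L(A_{0})_{\mathbb{Q}_{\ell}}$ fixing all classes of $\mathcal{R}^{\ast}(A_{0}^{n})$; condition (R3) forces $G\subset \MT(A)_{\mathbb{Q}_{\ell}}$, and the key identity $P(A_{0})=\MT(A)\cap L(A_{0})$ inside $L(A)$ (Theorem 6.1 of \cite{milne1999lm}) yields $G\subset P(A_{0})_{\mathbb{Q}_{\ell}}$, so $G$ fixes all Tate classes; Corollary \ref{r6c} then shows the Tate classes lie in the $\mathbb{Q}_{\ell}$-span of $\mathcal{R}^{\ast}$. Only at that point does Theorem \ref{r6} take over, as in your endgame, to propagate the isomorphism $\mathcal{R}^{\ast}\otimes\mathbb{Q}_{l}\simeq\mathcal{T}_{l}^{\ast}$ to every $l$ including $p$. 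So your outer frame (verify ($\dagger$) at one $\ell$, then apply Theorem \ref{r6}) agrees with the paper, but the core input producing the spanning statement --- (R3) combined with $P=\MT\cap L$ and the nondegeneracy of the pairings on $\mathcal{R}$ at a well-chosen $\ell$ --- is missing from your argument and cannot be replaced by an appeal to algebraicity of Tate classes.
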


\noindent In other words, instead of requiring $\mathcal{R}{}^{\ast}(X)$ to be
a $\mathbb{Q}{}$-structure on $\mathcal{T}_{l}^{\ast}(X)$ for all $l$, we
merely require that it be finite dimensional and map into $\mathcal{T}{}%
_{l}^{\ast}(X)$ for all $l$.

\begin{proof}
We fix a CM-subfield $K$ of $\mathbb{C}{}$ that is finite and Galois over
$\mathbb{Q}{}$ and contains a quadratic imaginary number field in which $p$
splits, and we let $\Gamma=\Gal(K/\mathbb{Q}{})$. Let $\ell$ be a prime $\neq
p$, and let $A$ be an abelian variety over $\mathbb{Q}{}^{\mathrm{al}}$ split
by $K$ (i.e., such that $\End^{0}(A)$ is split by $K$).

The inclusion $\End^{0}(A)\hookrightarrow\End^{0}(A_{0})$ maps the centre
$C(A)$ of $\End^{0}(A)$ onto a $\mathbb{Q}{}$-subalgebra of $\End^{0}(A_{0})$
containing its centre $C(A_{0})$, and hence it defines an inclusion
$L(A_{0})\rightarrow L(A)$ of Lefschetz groups. Consider the diagram%
\[
\xymatrix{
\MT(A)\ar@{^{(}->}[r]&L(A)\\
P(A_{0})\ar@{-->}[u]\ar@{^{(}->}[r]&L(A_{0})\ar@{^{(}->}[u]
}
\]
in which $MT(A)$ is the Mumford-Tate group of $A$ and $P(A_{0})$ is the
smallest algebraic subgroup of $L(A_{0})$ containing a Frobenius endomorphism
of $A_{0}$. Almost by definition, $\MT(A)$ is the largest algebraic subgroup
of $L(A)$ fixing the Hodge classes in $H_{B}^{2\ast}(A_{\mathbb{C}{}}%
^{n})(\ast)$ for all $n$, and so $\MT(A)_{\mathbb{Q}{}_{\ell}}$ is the largest
algebraic subgroup of $L(A)_{\mathbb{Q}{}_{\ell}}$ fixing the Hodge classes in
$H_{\ell}^{2\ast}(A^{n})(\ast)$ for all $n$. On the other hand, the classes in
$H_{\ell}^{2\ast}(A_{0}^{n})(\ast)$ fixed by $P(A_{0})_{\mathbb{Q}{}_{l}}$ are
exactly the Tate classes. The specialization map $H_{\ell}^{2\ast}%
(A)(\ast)\rightarrow H_{\ell}^{2\ast}(A_{0})(\ast)$ is equivariant for the
homomorphism $L(A_{0})\rightarrow L(A)$. From (\ref{e19}), we see that
$P\left(  A_{0}\right)  _{\mathbb{Q}{}_{\ell}}\subset\MT(A)_{\mathbb{Q}%
{}_{\ell}}$ (inside $L(A)_{\mathbb{Q}{}_{\ell}}$). This implies that
$P(A_{0})\subset\MT(A)$ (inside $L(A)$), and explains the left hand arrow in
the above diagram.

Now choose $A$ to be so large that every simple abelian variety over
$\mathbb{Q}{}^{\mathrm{al}}$ split by $K$ is isogenous to an abelian
subvariety of $A$. Then $A_{0}$ is an abelian variety over $\mathbb{F}{}$ such
that every abelian variety over $\mathbb{F}{}$ split by $K$ is isogenous to an
abelian subvariety of $A_{0}$ (see \cite{milne2007aim}, 8.7). With this choice
of $A$, the groups $L(A)$, $L(A_{0})$, $\MT(A)$, and $P(A_{0})$ are equal to
the groups denoted $T^{K}$, $L^{K}$, $S^{K}$, and $P^{K}$ in
\cite{milne1999lm}, and so (ibid., Theorem 6.1),%
\begin{equation}
P(A_{0})=\MT(A)\cap L(A_{0})\quad\text{(intersection inside }L(A)\text{).}
\label{e9}%
\end{equation}

Let $\mathcal{R}_{\ell}^{\ast}(A)$ be the image of $\mathcal{R}{}^{\ast}(A)$
in $H_{\ell}^{2\ast}(A)(\ast)$. The hypotheses of Theorem \ref{r3} hold with
$H_{W}=H_{\ell}$ for an infinite set of primes $\ell$ (see the proof of
Theorem \ref{r5}). In particular, there exists a prime $\ell$ such that the
product pairings
\[
\mathcal{R}{}_{\ell}^{r}(A)\times\mathcal{R}{}_{\ell}^{\dim A-r}%
(A)\rightarrow\mathcal{R}{}^{\dim A}(A)\simeq\mathbb{Q}{}%
\]
are nondegenerate for all $r$. Let $G$ be the largest algebraic subgroup of
$L(A_{0})_{\mathbb{Q}{}_{\ell}}$ fixing the rational Tate classes in $H_{\ell
}^{2\ast}(A_{0}^{n})(\ast)$ for all $n$. The group $G$ acts on $H_{\ell
}^{2\ast}(A^{n})(\ast)$ through the homomorphisms $G\rightarrow L(A_{0}%
)_{\mathbb{Q}{}_{\ell}}\rightarrow L(A)_{\mathbb{Q}{}_{\ell}}$, and it fixes
the Hodge classes (because of (R3)). Therefore,
\[
G\subset\MT(A)_{\mathbb{Q}{}_{\ell}}\cap L(A_{0})_{\mathbb{Q}{}_{\ell}%
}=P(A_{0})_{\mathbb{Q}{}_{\ell}},
\]
and so $G$ fixes all Tate classes in $H_{\ell}^{2\ast}(A_{0}^{n})(\ast)$ (all
$n$). According to (\ref{r6c}), this implies that the space of Tate classes in
$H_{\ell}^{2\ast}(A_{0}^{n})(\ast)$ (all $n$) is spanned by $\mathcal{R}%
{}^{\ast}(A_{0})$. Because the Frobenius maps on abelian varieties are
semisimple (Weil's theorem), Theorem \ref{r6} shows that the maps
$\mathcal{R}{}^{\ast}(A_{0}^{n})\otimes_{\mathbb{Q}{}}\mathbb{Q}{}%
_{l}\rightarrow\mathcal{T}{}_{l}^{\ast}(A_{0})$ are isomorphisms for all $l$
(including $l=p$). It follows that the same is true of every abelian
subvariety of some power $A_{0}$ (because it is an isogeny factor), i.e., for
all abelian varieties over $\mathbb{F}{}$ split by $K$. Since every abelian
variety over $\mathbb{F}{}$ is split by some CM-field, this completes the proof.
\end{proof}

\begin{theorem}
\label{r22}There exists at most one good theory of rational Tate classes on
$\mathcal{S}{}{}$.
\end{theorem}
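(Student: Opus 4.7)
The plan is to reduce uniqueness on $\mathcal{S}{}$ to uniqueness on the smaller class $\mathcal{S}{}_{0}$ via Corollary \ref{r27}, and then exploit the fact that Theorem \ref{r21} requires only the weakened axiom (R4*) in place of (R4). The key observation is that if two good theories exist, their intersection is itself a candidate family satisfying the hypotheses of Theorem \ref{r21}; that theorem then upgrades it to a full good theory, and a dimension count forces it to coincide with each.

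First, by Corollary \ref{r27}, any theory of rational Tate classes on $\mathcal{S}{}$ is determined by its restriction to $\mathcal{S}{}_{0}$, so it suffices to prove uniqueness on $\mathcal{S}{}_{0}$. Suppose then that $\mathcal{R}{}_{1}$ and $\mathcal{R}{}_{2}$ are two good theories of rational Tate classes on $\mathcal{S}{}_{0}$, and set
\[
\mathcal{R}{}^{\ast}(X)=\mathcal{R}{}_{1}^{\ast}(X)\cap\mathcal{R}{}_{2}^{\ast}(X)\subset H_{\mathbb{A}{}}^{2\ast}(X)(\ast)\qquad(X\in\mathcal{S}{}_{0}).
\]
This is a graded $\mathbb{Q}{}$-subalgebra, and I claim it still satisfies (R1), (R2), (R3), and (R4*). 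Conditions (R1) and (R2) are immediate because both $\mathcal{R}{}_{i}$ satisfy them; (R3) holds because any Hodge class on a CM abelian variety $A$ specializes into both $\mathcal{R}{}_{1}^{\ast}(A_{0})$ and $\mathcal{R}{}_{2}^{\ast}(A_{0})$, hence into their intersection. For (R4*), note that $\mathcal{R}{}^{\ast}(X)\subset\mathcal{R}{}_{1}^{\ast}(X)$ is of finite degree over $\mathbb{Q}{}$, and its projection into $H_{l}^{2\ast}(X)(\ast)$ factors through $\mathcal{T}{}_{l}^{\ast}(X)$ because it does for each $\mathcal{R}{}_{i}$.

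Applying Theorem \ref{r21} to the family $(\mathcal{R}{}^{\ast}(X))_{X\in\mathcal{S}{}_{0}}$ then promotes it to a \emph{good} theory of rational Tate classes on $\mathcal{S}{}_{0}$; in particular it satisfies (R4), so the map $\mathcal{R}{}^{\ast}(X)\otimes_{\mathbb{Q}{}}\mathbb{Q}{}_{l}\rightarrow\mathcal{T}{}_{l}^{\ast}(X)$ is an isomorphism for every $l$. Since $\mathcal{R}{}_{1}$ is also a good theory, the analogous map for $\mathcal{R}{}_{1}$ is an isomorphism as well. Therefore
\[
\dim_{\mathbb{Q}{}}\mathcal{R}{}^{\ast}(X)=\dim_{\mathbb{Q}{}_{l}}\mathcal{T}{}_{l}^{\ast}(X)=\dim_{\mathbb{Q}{}}\mathcal{R}{}_{1}^{\ast}(X),
\]
and the inclusion $\mathcal{R}{}^{\ast}(X)\subset\mathcal{R}{}_{1}^{\ast}(X)$ combined with equality of finite dimensions forces $\mathcal{R}{}^{\ast}(X)=\mathcal{R}{}_{1}^{\ast}(X)$. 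By symmetry $\mathcal{R}{}^{\ast}(X)=\mathcal{R}{}_{2}^{\ast}(X)$, so $\mathcal{R}{}_{1}=\mathcal{R}{}_{2}$ on $\mathcal{S}{}_{0}$, and Corollary \ref{r27} extends the equality to all of $\mathcal{S}{}$.

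The main obstacle in this argument is not in the bookkeeping above but is already encapsulated in Theorem \ref{r21}: it is there that the specialization of Hodge classes from CM abelian varieties must be combined with the identity $P(A_{0})=\MT(A)\cap L(A_{0})$ of \cite{milne1999lm} and Theorem \ref{r6} to force $\mathcal{R}{}^{\ast}(A_{0})\otimes\mathbb{Q}{}_{l}$ to exhaust $\mathcal{T}{}_{l}^{\ast}(A_{0})$. Once that machinery is in place, Theorem \ref{r22} reduces to the elementary intersection-and-dimension argument; its essential content is that the weak axioms (R1)--(R3) together with (R4*) already rigidify the theory completely.
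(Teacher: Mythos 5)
Your proof is correct and follows essentially the same route as the paper: reduce to $\mathcal{S}_{0}$ via Corollary \ref{r27}, observe that the intersection of two good theories satisfies (R1)--(R3) and (R4*) so that Theorem \ref{r21} makes it a good theory, and then use the $\mathbb{Q}$-structure property (R4) to force it to equal each of the original theories. Your write-up just makes explicit the verification of the axioms for the intersection and the dimension count that the paper leaves implicit.
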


\begin{proof}
It suffices to prove this with $\mathcal{S}{}=\mathcal{S}{}_{0}$ (see
\ref{r27}). Certainly, if $\mathcal{R}{}_{1}^{\ast}$ and $\mathcal{R}{}%
_{2}^{\ast}$ are two theories of rational Tate classes and one is contained in
the other, then they are equal (by condition (R4)). But Theorem \ref{r9} shows
that if $\mathcal{R}{}_{1}^{\ast}$ and $\mathcal{R}{}_{2}^{\ast}$ are
\textit{good} theories of rational Tate classes on $\mathcal{S}{}%
_{0}(\mathbb{F}{})$, then $\mathcal{R}_{1}^{\ast}\cap\mathcal{R}{}_{2}^{\ast}$
is also a good theory of rational Tate classes, and so it is equal to each of
$\mathcal{R}{}_{1}^{\ast}$ and $\mathcal{R}{}_{2}^{\ast}$.
\end{proof}

\begin{theorem}
[Milne \textnf{\citeyear{milne1999lm}}]\label{r23a}If the Hodge classes on CM
abelian varieties over $\mathbb{Q}{}^{\mathrm{al}}$ specialize to algebraic
classes on abelian varieties over $\mathbb{F}$, then the Tate conjecture holds
for abelian varieties over $\mathbb{F}$. In particular, the Hodge conjecture
for CM abelian varieties over $\mathbb{Q}{}^{\mathrm{al}}$ implies the Tate
conjecture for abelian varieties over $\mathbb{F}{}$.
\end{theorem}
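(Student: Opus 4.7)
The plan is to apply Theorem \ref{r21} to the family of algebraic classes. For each abelian variety $X$ over $\mathbb{F}{}$, let $\mathcal{R}{}^{\ast}(X)=\mathcal{A}{}^{\ast}(X)$ be the $\mathbb{Q}{}$-subalgebra of $H_{\mathbb{A}{}}^{2\ast}(X)(\ast)$ generated by the classes of algebraic cycles. One checks directly that $(\mathcal{A}{}^{\ast}(X))_{X\in\mathcal{S}{}_{0}}$ satisfies (R1) and (R2); the hypothesis of the theorem is exactly the statement (R3) that Hodge classes on a CM abelian variety $A/\mathbb{Q}{}^{\mathrm{al}}$ specialize into $\mathcal{A}{}^{\ast}(A_{0})$.

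To invoke Theorem \ref{r21} I still need (R4*). The image of $\mathcal{A}{}^{\ast}(X)$ in $H_{l}^{2\ast}(X)(\ast)$ is contained in $\mathcal{T}{}_{l}^{\ast}(X)$ because cycle classes are fixed by some Frobenius (cohomology of a cycle defined over a finite subfield). Finite-dimensionality follows from the same observation: the $\mathbb{Q}{}_{\ell}$-span of $\mathcal{A}{}^{\ast}(X)$ inside $\mathcal{T}{}_{\ell}^{\ast}(X)$ is finite-dimensional for any $\ell\neq p$, so a fortiori $\dim_{\mathbb{Q}{}}\mathcal{A}{}^{\ast}(X)<\infty$. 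Thus all of (R1), (R2), (R3), (R4*) are satisfied, and Theorem \ref{r21} promotes $\mathcal{A}{}^{\ast}$ to a good theory of rational Tate classes on $\mathcal{S}{}_{0}$.

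By definition of a theory of rational Tate classes, condition (R4) then gives, for every abelian variety $X$ over $\mathbb{F}{}$ and every $l$ (including $l=p$), an isomorphism
\[
\mathcal{A}{}^{\ast}(X)\otimes_{\mathbb{Q}{}}\mathbb{Q}{}_{l}\;\overset{\simeq}{\longrightarrow}\;\mathcal{T}{}_{l}^{\ast}(X).
\]
The surjectivity of this map is precisely the Tate conjecture for $X$ at $l$. This proves the first assertion. For the second assertion, observe that the Hodge conjecture for CM abelian varieties over $\mathbb{Q}{}^{\mathrm{al}}$ (equivalently, over $\mathbb{C}{}$, via the equivalence $A\rightsquigarrow A_{\mathbb{C}{}}$) asserts that every Hodge class is the class of an algebraic cycle, and algebraic cycles specialize to algebraic cycles under reduction to $\mathbb{F}{}$; hence the hypothesis of the first part is satisfied.

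The substantive step in this plan is the invocation of Theorem \ref{r21}, so the whole argument is only as deep as that theorem. The expected main obstacle, already isolated there, is the comparison $P(A_{0})=\MT(A)\cap L(A_{0})$ that allows one to promote (R3) into the full $\mathbb{Q}{}$-structure statement (R4); once this has been shown to yield a theory of rational Tate classes, the passage to the Tate conjecture for all abelian varieties (and not just those split by a fixed CM field $K$) is automatic since every abelian variety over $\mathbb{F}{}$ is isogenous to an abelian subvariety of a power of a suitable $A_{0}$, and Tate-class surjectivity is inherited by isogeny factors.
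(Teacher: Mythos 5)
Your overall route is the same as the paper's: the paper's proof of Theorem \ref{r23a} consists precisely of taking $\mathcal{R}^{\ast}=\mathcal{A}^{\ast}$, the $\mathbb{Q}$-algebra generated by algebraic classes, and invoking Theorem \ref{r21} (the hypothesis supplying (R3), and (R4) then giving the Tate conjecture), exactly as you do.

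The one step you add beyond the paper, however, does not hold up: your verification of the finite-degree clause of (R4*). From the finite-dimensionality of the $\mathbb{Q}_{\ell}$-span of $\mathcal{A}^{\ast}(X)$ inside $\mathcal{T}_{\ell}^{\ast}(X)$ you cannot conclude ``a fortiori'' that $\dim_{\mathbb{Q}}\mathcal{A}^{\ast}(X)<\infty$: a $\mathbb{Q}$-subspace of a finite-dimensional $\mathbb{Q}_{\ell}$-vector space can have infinite $\mathbb{Q}$-dimension ($\mathbb{Q}_{\ell}$ itself, for instance). In characteristic $p$ this finiteness is not a formality. The image of the cycle class map at a single $\ell$ is finite-dimensional over $\mathbb{Q}$ only when one knows $\ell$-homological equivalence agrees with numerical equivalence (Clozel's Theorem \ref{r5} gives this only for a positive-density set of $\ell$), and for the adelic algebra $\mathcal{A}^{\ast}(X)\subset H_{\mathbb{A}}^{2\ast}(X)(\ast)$ one would need in addition that a cycle numerically equivalent to zero has classes spanning a finite-dimensional $\mathbb{Q}$-space at \emph{every} $l$, including $p$ and the bad $\ell$ --- which is exactly the kind of statement the whole machinery is trying to establish, and which neither the hypothesis of Theorem \ref{r23a} nor anything you cite provides a priori.

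The gap is repairable within the paper's framework, and this is worth making explicit since the paper's two-line proof is silent on the point. One option: run the proof of Theorem \ref{r21} noting that the group-theoretic part only uses the images $\mathcal{A}_{\ell}^{\ast}$ at a Clozel prime $\ell$ (where finiteness over $\mathbb{Q}$ does hold, since homological equals numerical equivalence there, and where Theorem \ref{r3} applies), concluding that the algebraic classes $\mathbb{Q}_{\ell}$-span $\mathcal{T}_{\ell}^{\ast}(A_{0}^{n})$ at that one $\ell$; then choose \emph{finitely many} algebraic cycles whose $\ell$-classes span, and apply Theorem \ref{r6} to the $\mathbb{Q}$-subalgebra of $H_{\mathbb{A}}^{2\ast}$ they generate --- this subalgebra is automatically of finite degree (only finitely many monomials of total degree $\leq\dim$), its top-degree products are intersection numbers, hence rational, and Theorem \ref{r6} then transfers the spanning to every $l$, which is the Tate conjecture (the Tate conjecture only concerns $\mathbb{Q}_{l}$-spans, so working with a subalgebra of $\mathcal{A}^{\ast}$ costs nothing). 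Alternatively one can observe that the finite-degree hypotheses in Theorems \ref{r3} and \ref{r6} can be removed from their proofs (the essential inputs are statements about $Q$-spans, and parts (a)--(c) of Proposition \ref{r5a} do not use finiteness of $R$), so that finiteness of $\mathcal{A}^{\ast}$ comes out a posteriori; but some such argument must replace the invalid inference you wrote.
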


\begin{proof}
Let $\mathcal{A}{}^{\ast}(X)$ be the $\mathbb{Q}{}$-subalgebra of
$H_{\mathbb{A}{}}^{\ast}(X)$ generated by the algebraic classes. Theorem
\ref{r21} shows that $\mathcal{A}{}^{\ast}$ is a good theory of rational Tate
classes on $\mathcal{S}{}_{0}$.
\end{proof}

\begin{remark}
\label{r23b}For any CM subfield $K$ of $\mathbb{C}{}$ finite and Galois over
$\mathbb{Q}{}$, Hazama (2002, 2003)\nocite{hazama2002, hazama2003} constructs
a CM abelian variety $A$ with the following properties:

\begin{itemize}
\item $A$ is split by $K$ and every simple CM abelian variety split by $K$ is
isogenous to an abelian subvariety of $A$, and

\item for all $n\geq0$, the $\mathbb{Q}{}$-algebra of Hodge classes on $A^{n}$
is generated by those of degree $\leq2$.
\end{itemize}

\noindent It follows that, in order to prove the Hodge conjecture for CM
abelian varieties, it suffices to prove it in codimension $2$. On combining
Hazama's ideas with those from \cite{milne1999lm}, one can show that in order
to prove the Tate conjecture for abelian varieties over $\mathbb{F}{}$, it
suffices to prove it in codimension $2$ (\cite{milne2007aim}, 8.6).
\end{remark}

\subsection{Motives defined by a good theory of rational Tate classes}

Recall (\ref{r20}) that a theory of rational Tate classes $\mathcal{R}{}%
^{\ast}$ on $\mathcal{S}{}$ defines a semisimple tannakian category of motives
$\Mot(\mathbb{F}{})$ with fundamental group $P$. Moreover (\ref{r25}), there
is a quotient functor $\LMot(\mathbb{F})\rightarrow\Mot(\mathbb{F}{})$ bound
by a homomorphism of fundamental group $P\rightarrow L$. When $\mathcal{R{}%
}^{\ast}$ is a \emph{good} theory, then this extends to a commutative diagram
of exact tensor functors of semisimple tannakian categories, as at left, bound
by the commutative diagram of fundamental groups at right:%
\begin{equation}
\begin{CD} \mathsf{CM}(\mathbb{Q}^{\mathrm{al}})@<J<<\LCM(\mathbb{Q}^{\text{al}})\\ @VV{R}V@VV{R^L}V\\ \Mot(\mathbb{F})@<I<<\LMot(\mathbb{F}) \end{CD}\hspace
{1in}\quad\begin{CD} S@>>>T\\ @AAA@AAA\\ P@>>>L. \end{CD} \label{e18}%
\end{equation}
Here:

\begin{itemize}
\item $\CM(\mathbb{Q}^{\text{al}})$ is the category of motives based on the CM
abelian varieties over $\mathbb{Q}{}^{\mathrm{al}}$ using the Hodge classes as
correspondences. Its fundamental group is the Serre group $S$.

\item $\LCM(\mathbb{Q}{}^{\text{al}})$ is the similar category, except using
the Lefschetz classes as correspondences. Its fundamental group is a certain
pro-algebraic group $T$ of multiplicative type.

\item The horizontal functors are of the form $e\cdot hX(r)\rightsquigarrow
e\cdot hX(r)$, and the vertical functors are of the form $e\cdot
hX(r)\rightsquigarrow e\cdot hX_{0}(r)$.

\item The groups and homomorphisms in the diagram at right have elementary
explicit descriptions, and the homomorphisms are all injective.

\item For each $l$ (including $l=p$), there exists a fibre functor $\omega
_{l}$ on $\Mot(\mathbb{F}{})$ such that $\omega_{l}\circ R$ and $\omega
_{l}\circ I$ are equal (meaning \textit{really} equal) to the standard fibre functors.
\end{itemize}

\noindent See \cite{milne1999lm}.

The last statement places a condition on $\Mot(\mathbb{F}{})$ for every
\textit{finite} prime. We shall also need a condition at the infinite prime,
and this is expressed in terms of polarizations on Tate triples (see
\cite{deligneM1982}, \S 5, for this theory).

A divisor $D$ on an abelian variety $A$ over $\mathbb{F}{}$ defines a pairing
$\psi_{D}\colon h_{1}A\times h_{1}A\rightarrow\mathbb{T}{}$, which is a Weil
form if $D$ is very ample (\cite{weil1948}, Th\'{e}or\`{e}me 38). A Weil form
arising in this way from a very ample divisor is said to be \emph{geometric.}

The categories in (\ref{e18}) all have natural Tate triple structures which
are preserved by the functors. Moreover, each of the categories
$\CM(\mathbb{Q}^{\text{al}})$, $\LCM(\mathbb{Q}{}^{\text{al}})$, and
$\LMot(\mathbb{F})$ has a unique polarization $\Pi^{\CM}$, $\Pi^{\LCM}$,
$\Pi^{\LMot}$ called the \emph{geometric polarization}, for which the
geometric Weil forms are positive. More precisely, for each homogeneous object
$X$ in the category, the geometric Weil forms on $X$ are contained in a single
equivalence class $\Pi(X)$, and the family $(\Pi(X))_{X}$ is a polarization on
the Tate triple. Moreover $J\colon\Pi^{\LCM}\mapsto\Pi^{\CM}$ and $R^{L}%
\colon\Pi^{\LCM}\mapsto\Pi^{\LMot}$. See \cite{milne2002}\nocite{milne2002p},
1.1, 1.5.

\begin{lemma}
\label{r30}Let $\mathcal{S}{}=\mathcal{S}{}_{0}$. There exists a unique
polarization $\Pi$ on $\Mot(\mathbb{F}{})$ such that $R\colon\Pi^{\CM}%
\mapsto\Pi$.
\end{lemma}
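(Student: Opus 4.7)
The plan is to transfer the geometric polarization $\Pi^{\CM}$ on $\CM(\mathbb{Q}^{\mathrm{al}})$ across the reduction functor $R$, using that $R$ is essentially surjective (on direct summands), and to deduce uniqueness from this same essential surjectivity.

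The first step is to establish that every object of $\Mot(\mathbb{F})$ is isomorphic to a direct summand of $R(N)$ for some $N \in \CM(\mathbb{Q}^{\mathrm{al}})$. This is equivalent to injectivity of the map $P \hookrightarrow S$ on fundamental groups appearing in the right-hand diagram of (\ref{e18}), and concretely follows from Honda--Tate theory: every simple abelian variety over $\mathbb{F}{}$ is, up to isogeny, the reduction of a CM abelian variety over $\mathbb{Q}{}^{\mathrm{al}}$. With this in hand, for each homogeneous $M \in \Mot(\mathbb{F})$ I would choose a lift $N$ and a projector $e\in\End(R(N))$ with image $M$, and declare a Weil form $\phi$ on $M$ to lie in $\Pi(M)$ if it is cut out by $e$ from a form in $\Pi^{\CM}(N)$---more precisely, if $\phi$ coincides with the restriction of $R(\psi)$ to $M$ for some $\psi\in\Pi^{\CM}(N)$ compatible with $e$. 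To handle well-definedness (independence of the choice of $N$), I would observe that for two lifts $N, N^{\prime}$ of $M$ the direct sum $N\oplus N^{\prime}$ is again a lift of $M$, and combine this with the standard fact that polarizations restrict consistently to direct summands; the uniqueness of $\Pi^{\CM}$ (recorded just before the lemma) then forces the two candidate classes of positive forms on $M$ to coincide.

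Verification that $\Pi$ is in fact a polarization on the Tate triple $\Mot(\mathbb{F})$---nonemptiness, the closure properties under tensor products, direct sums, and duals, and the dichotomy that every Weil form is equivalent to one in $\Pi(M)$ or in $-\Pi(M)$---follows routinely from the corresponding properties of $\Pi^{\CM}$ because $R$ is an exact tensor functor of Tate triples. For uniqueness, any other polarization $\Pi^{\prime}$ with $R\colon\Pi^{\CM}\mapsto\Pi^{\prime}$ must agree with $\Pi$ on each $R(N)$, and since every object of $\Mot(\mathbb{F})$ is a summand of such an $R(N)$ and polarization classes descend uniquely along direct summands, $\Pi^{\prime}=\Pi$.

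The main obstacle is well-definedness: showing that Weil forms in $\Pi^{\CM}$ coming from two different CM lifts of the same $M$ restrict to the \emph{same} equivalence class on $M$. A useful auxiliary structure here is the already-established polarization $\Pi^{\LMot}$ on $\LMot(\mathbb{F})$: the commutativity $R\circ J = I\circ R^{L}$ in (\ref{e18}), together with $J\colon\Pi^{\LCM}\mapsto\Pi^{\CM}$ and $R^{L}\colon\Pi^{\LCM}\mapsto\Pi^{\LMot}$, forces any candidate $\Pi$ to satisfy $I\colon\Pi^{\LMot}\mapsto\Pi$. This provides a second, independently defined notion of positivity on every object in the essential image of $I$, and its agreement with the candidate defined above serves as the rigidity check that closes the well-definedness argument.
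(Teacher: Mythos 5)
Your structural frame (every object of $\Mot(\mathbb{F}{})$ is, up to isomorphism, a direct summand of some $R(N)$ with $N$ a CM motive, by Honda--Tate/Zink; uniqueness then follows) is fine, but the step you label ``routine'' is precisely the content of the lemma, and it does not transport formally. The functor $R$ is a \emph{quotient} functor: by (\ref{e11}), $\Hom(RN,RN')\simeq\omega^{R}(\underline{\Hom}(N,N')^{P})$, which is strictly larger than $\Hom(N,N')$ (the $S$-invariants). So objects acquire new endomorphisms, idempotents and isomorphisms after reduction, and positivity downstairs is a strictly stronger condition than the image of positivity upstairs. The acid test is the subcategory $\CM(\mathbb{Q}^{\mathrm{al}})^{P}$ of objects that become \emph{trivial} in $\Mot(\mathbb{F}{})$: for such an $N$, a Weil form $\psi\in\Pi^{\CM}(N)$ becomes an honest $\mathbb{Q}{}$-valued symmetric bilinear form on the $\mathbb{Q}{}$-vector space $\Hom(\1,RN)$, and membership of $R(\psi)$ in a polarization means positive definiteness of that rational form --- a genuinely arithmetic assertion that cannot be deduced from the tensor formalism or from the properties of $\Pi^{\CM}$ alone. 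Your proposal contains no positivity input about abelian varieties over $\mathbb{F}{}$, so it cannot be completed as written; likewise, your appeal to $\Pi^{\LMot}$ via $R\circ J=I\circ R^{L}$ only records a consequence ($I\colon\Pi^{\LMot}\mapsto\Pi$) of the polarization whose existence is in question, and your direct-sum trick addresses only the comparison of two lifts, not positivity with respect to the enlarged endomorphism algebras.

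The paper's proof supplies exactly this missing ingredient. It reduces to the subcategories $\CM^{K}$ and $\Mot^{K}$ generated by abelian varieties split by a fixed finite Galois CM field $K$, chooses a CM abelian variety $A$ containing up to isogeny every simple CM abelian variety split by $K$, and invokes the criterion of Milne 2002 (1.4, 1.5) for descending a polarization along a quotient of Tate triples: it suffices that a single Weil form $\psi=T^{\phi}|X\in\Pi^{\CM}(X)$, on the object $X=\underline{\End}(h_{1}A)^{P}$ which generates the subcategory becoming trivial (this uses the faithfulness of the $S^{K}/P^{K}$-action on $X$, proved in the footnote), maps to a positive-definite form. Here $R^{K}(X)=\End^{0}(A_{0})$ and $R^{K}(\psi)$ is the Rosati trace form $u,v\mapsto\Tr(u\cdot v^{\dagger})$ attached to the reduced polarization, whose positive definiteness is Weil's Th\'eor\`eme 38. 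That is the irreducible content of the lemma; to repair your argument you would need to isolate the objects of $\Mot(\mathbb{F}{})$ on which $P$ acts trivially and prove positive definiteness of your candidate forms there, which leads back to this Rosati--Weil computation.
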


\begin{pf}
[following \textnf{\cite{milne2002p}}, proof of 2.1]Fix a CM subfield $K$ of
$\mathbb{C}{}$ such that $K$ is finite and Galois over $\mathbb{Q}{}$ and $K$
properly contains an imaginary quadratic field in which $p$ splits. Let
$\CM^{K}(\mathbb{Q}{}^{\mathrm{al}})$ and $\Mot^{K}(\mathbb{F}{})$ denote the
tannakian subcategories of $\CM(\mathbb{Q}{}^{\mathrm{al}})$ and
$\Mot(\mathbb{F}{})$ generated by the abelian varieties split by $K$. It
suffices to prove the proposition for $R^{K}\colon\CM^{K}(\mathbb{Q}%
{}^{\mathrm{al}})\rightarrow\Mot^{K}(\mathbb{F}{})$.

Let $A$ be a CM abelian variety over $\mathbb{Q}{}^{\mathrm{al}}$ split by $K$
such that every simple CM abelian variety over $\mathbb{Q}{}^{\mathrm{al}}$
split by $K$ is isogenous to a subvariety of $A$, and let $X=\underline
{\End}(h_{1}A)^{P}$. It follows from \cite{milne1999lm} that $S^{K}/P^{K}$
acts faithfully on $X$,\footnote{As Yves Andr\'{e} pointed out to me, this is
not entirely obvious, so I include a proof. I begin with an elementary remark.
Let $T\supset L$ be tori with $T$ acting on a finite dimensional vector space
$V$. Let $\chi_{1},\ldots,\chi_{n}$ be the characters of $T$ occurring in $V$.
Then $T$ acts faithfully on $V$ if and only if $\chi_{1},\ldots,\chi_{n}$ span
$X^{\ast}(T)$ as a $\mathbb{Z}{}$-module --- assume this. The characters of
$T$ occurring in $\End(V)$ are $\{\chi_{i}-\chi_{j}\}$, and the set of those
occurring in $\End(V)^{L}$ is%
\begin{equation}
\{\chi_{i}-\chi_{j}\mid\chi_{i}|L=\chi_{j}|L\}. \tag{*}%
\end{equation}
On the other hand,%
\begin{equation}
X^{\ast}(T/L)=\{{\textstyle\sum}a_{i}\chi_{i}\mid{\textstyle\sum}a_{i}\chi
_{i}|L=0\}. \tag{**}%
\end{equation}
Thus, $T/L$ will act faithfully on $\End(V)^{L}$ if the set (*) spans the
$\mathbb{Z}{}$-module (**).
\par
I now prove the statement. With the notations of \cite{milne1999lm}, \S 6
(especially p69), $T^{\Psi}$ acts on a realization of $h_{1}A^{\Psi}$ through
the characters $\psi_{0},\ldots,\psi_{n-1},\iota\psi_{0},\ldots,\iota
\psi_{n-1}$, where the $\psi_{i}$ have been numbered so that $\pi(\psi
_{0})=\cdots=\pi(\psi_{d-1})=\pi_{0}$, $\pi(\psi_{d})=\cdots=\pi(\psi
_{2d-1})=\pi_{1},$ etc.. Now $\sum a_{i}\cdot\psi_{i}|L^{\Psi}={\textstyle\sum
}a_{i}\cdot\pi(\psi_{i})$, which is zero if and only if ${\textstyle\sum
_{i=0}^{d-1}}a_{i}=0$, ${\textstyle\sum_{i=d}^{2d-1}}a_{i}=0$, \ldots\ ; but
then ${\textstyle\sum}a_{i}\psi_{i}={\textstyle\sum_{i=0}^{d-1}}a_{i}(\psi
_{i}-\psi_{0})+\cdots$, which (by the remark) shows that $T^{\Psi}/L^{\Pi}$
acts faithfully on $\underline{\End}(h_{1}A^{\Psi})^{L^{\Pi}}$. Similarly
$T^{\bar{\Psi}}/L^{\bar{\Pi}}$ acts faithfully on $\underline{\End}%
(h_{1}A^{\bar{\Psi}})^{L^{\bar{\Pi}}}$ and it follows that $T^{A^{\Psi}\times
A^{\bar{\Psi}}}/L^{A^{\Pi}\times A^{\bar{\Pi}}}$ acts faithfully on
$\underline{\End}(h_{1}(A^{\Psi}\times A^{\bar{\Psi}}))^{L^{A^{\Pi}\times
A^{\bar{\Pi}}}}$. As $P^{K}/L^{K}\hookrightarrow T^{A^{\Psi}\times
A^{\bar{\Psi}}}/L^{A^{\Pi}\times A^{\bar{\Pi}}}$ (cf. ibid. Lemma 6.9), this
implies the statement.} and hence that $X$ generates $\Mot^{K}$.

Let $\phi$ be the geometric Weil form on $h_{1}A$ defined by an ample divisor
$D$ on $A$, and let $\psi=T^{\phi}|X$, where $T^{\phi}$ is the symmetric
bilinear form on $\underline{\End}(X)$ defined by $\phi$ (\cite{milne2002p},
1.1). Then $\psi\in\Pi^{\CM}(X)$, and it suffices to show that $R^{K}(\psi)$
is positive-definite (ibid. 1.4, 1.5). But $R^{K}(X)=\End^{0}(A_{0})$ and
$R^{K}(\psi)$ is the trace pairing $u,v\mapsto\Tr(u\cdot v^{\dagger})$ of the
Rosati involution defined by $D_{0}$ on $A_{0}$, which is positive definite by
Th\'{e}or\`{e}me 38 of \cite{weil1948}.
\end{pf}

\begin{theorem}
\label{r31}There exists a unique polarization $\Pi$ on $\Mot(\mathbb{F}{})$
such that

\begin{enumerate}
\item the geometric Weil forms are positive,

\item $R\colon\Pi^{\CM}\mapsto\Pi$, and

\item $I\colon\Pi^{\LMot}\mapsto\Pi$.
\end{enumerate}

\noindent Moreover, each of these conditions determines $\Pi$ uniquely.
\end{theorem}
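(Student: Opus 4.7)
The plan is to deduce the theorem from Lemma \ref{r30}, Corollary \ref{r26}, and a single diagram chase in (\ref{e18}), then handle the three uniqueness statements via the principle that a polarization of a semisimple tannakian Tate triple is determined by its classes on any tensor-generating family of homogeneous objects (\cite{deligneM1982}, \S 5).

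For existence, Corollary \ref{r26} identifies $\Mot(\mathbb{F}{};\mathcal{S}{}_{0})$ with $\Mot(\mathbb{F}{};\mathcal{S}{})$ as tensor categories, hence as Tate triples. Lemma \ref{r30} then provides a polarization $\Pi$ on $\Mot(\mathbb{F}{})$ with $R\colon\Pi^{\CM}\mapsto\Pi$, which is condition (2). The discussion preceding Lemma \ref{r30} already records $J\colon\Pi^{\LCM}\mapsto\Pi^{\CM}$ and $R^{L}\colon\Pi^{\LCM}\mapsto\Pi^{\LMot}$, so by the commutativity $R\circ J=I\circ R^{L}$ of (\ref{e18}),
\[
I(\Pi^{\LMot})=I(R^{L}(\Pi^{\LCM}))=R(J(\Pi^{\LCM}))=R(\Pi^{\CM})=\Pi,
\]
which is condition (3). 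For (1), $\Pi^{\LMot}$ is by construction the polarization on $\LMot(\mathbb{F}{})$ for which the geometric Weil forms are positive; since $I$ is the identity on objects and sends each divisor class (a rational Tate class by (R2)) to itself, a geometric Weil form $\psi_{D}$ on $h_{1}B$ remains a representative of $\Pi(h_{1}B)=I(\Pi^{\LMot})(h_{1}B)$, and so is positive for $\Pi$.

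For uniqueness, each of the three conditions pins $\Pi$ down on a family of homogeneous objects that tensor-generates $\Mot(\mathbb{F}{})$. For (2), the proof of Lemma \ref{r30} exhibits the object $X=\underline{\End}(h_{1}A)^{P}$ in the image of $R$ generating $\Mot^{K}(\mathbb{F}{})$, and letting $K$ vary among CM fields covers all of $\Mot(\mathbb{F}{})$. For (3), $I$ is the identity on objects, so its image is literally all of $\Mot(\mathbb{F}{})$ and the condition $I\colon\Pi^{\LMot}\mapsto\Pi$ determines $\Pi$ on every homogeneous object. For (1), every abelian variety $B$ over $\mathbb{F}{}$ admits an ample divisor $D$, whose class is a rational Tate class, so $\psi_{D}\in\Hom(h_{1}B\otimes h_{1}B,\mathbb{T}{})$ is a morphism in $\Mot(\mathbb{F}{})$, and the family $\{h_{1}B\}_{B}$ generates $\Mot(\mathbb{F}{})$; requiring every such $\psi_{D}$ to be positive fixes $\Pi$ uniquely.

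The main obstacle is nothing beyond the diagram chase and checking compatibility with the Tate-triple structure — no subtle sign ambiguities arise because (\ref{e18}) is given to be commutative as a diagram of Tate-triple functors. The substantive content — constructing $\Pi$ so that $R\colon\Pi^{\CM}\mapsto\Pi$ and verifying positivity of the Rosati trace form via \cite{weil1948}, Th\'{e}or\`{e}me 38 — has already been carried out in Lemma \ref{r30}, and the generation of $\Mot^{K}(\mathbb{F}{})$ by $X=\underline{\End}(h_{1}A)^{P}$ (based on the faithfulness of the $S^{K}/P^{K}$-action noted in the footnote there) is what makes the uniqueness under (2) go through.
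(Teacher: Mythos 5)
Your proof is correct, but it routes the argument differently from the paper, and the comparison is worth recording. The paper also starts from Corollary \ref{r26} and Lemma \ref{r30} to get a polarization $\Pi$ with $R\colon\Pi^{\CM}\mapsto\Pi$, but it then proves condition (a) \emph{directly} from (b): by \cite{zink1983}, 2.7, every polarized abelian variety over $\mathbb{F}{}$ is isogenous to the reduction of a polarized CM abelian variety over $\mathbb{Q}{}^{\mathrm{al}}$, so every geometric Weil form on a homogeneous factor of the motive of an abelian variety is the image under $R$ of a form positive for $\Pi^{\CM}$, hence positive for $\Pi$; condition (c) is then noted to follow from (a). You instead deduce (c) from (b) by the chase in (\ref{e18}), using the recorded compatibilities $J\colon\Pi^{\LCM}\mapsto\Pi^{\CM}$ and $R^{L}\colon\Pi^{\LCM}\mapsto\Pi^{\LMot}$ together with the principle that a polarization is determined by its positive forms on a family generating the category (the image of the quotient functor $R^{L}$), and then get (a) from (c) since the geometric Weil forms are Lefschetz and already positive for $\Pi^{\LMot}$. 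Both routes are legitimate; yours avoids invoking Zink's lifting theorem inside this proof (it is of course still needed elsewhere, e.g.\ in Lemma \ref{r35}), at the price of leaning on the full Tate-triple compatibility of (\ref{e18}) and on the ``cancellation'' step $I\circ R^{L}\colon\Pi^{\LCM}\mapsto\Pi$ plus $R^{L}\colon\Pi^{\LCM}\mapsto\Pi^{\LMot}$ implies $I\colon\Pi^{\LMot}\mapsto\Pi$, which should be stated as such rather than in the literal pushforward notation $I(\Pi^{\LMot})=\Pi$ (polarizations do not push forward pointwise; the step uses exactly the generation principle you announced). Two small cautions: in your uniqueness argument for (b), the object $X=\underline{\End}(h_{1}A)^{P}$ from the proof of Lemma \ref{r30} is fixed by $P$, so $R(X)\simeq\End^{0}(A_{0})$ is a trivial object of $\Mot^{K}(\mathbb{F}{})$ and is not a tensor generator there; the correct (and simpler) reason that condition (b) determines $\Pi$ is that $R$ is a quotient functor, so every homogeneous object of $\Mot(\mathbb{F}{})$ is a direct factor of an object in its image. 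With that repair, your treatment of the final clause (``each condition determines $\Pi$ uniquely'') is fine and in fact more explicit than the paper's ``the uniqueness being obvious.''
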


\begin{proof}
The uniqueness being obvious, it remains to prove the existence. As
$\Mot(\mathbb{F}{};\mathcal{S}{}_{0})\rightarrow\Mot(\mathbb{F}{}%
;\mathcal{S})$ is an equivalence, there exists an unique polarization $\Pi$ on
$\Mot(\mathbb{F}{};\mathcal{S}{})$ such that $R\colon\Pi^{\CM}\mapsto\Pi$. The
geometric Weil forms are positive for $\Pi^{\CM}$, and every polarized abelian
variety over $\mathbb{F}{}$ is isogenous to the reduction of a polarized CM
abelian variety over $\mathbb{Q}^{\mathrm{al}}$ (\cite{zink1983}, 2.7), and so
if $R\colon\Pi^{\CM}\rightarrow\Pi$, then every geometric Weil form on a
homogeneous factor of the motive of an abelian variety is positive, but all
homogeneous objects in $\Mot(\mathbb{F}{})$ are such factors. This proves that
$\Pi$ has the properties (a) and (b), and property (c) follows obviously from (a).
\end{proof}

\begin{aside}
\label{r32}In fact, $\Pi$ is the only polarization on $\Mot(\mathbb{F}{})$ for
which the geometric Weil forms on a supersingular elliptic curve are positive
(\cite{milne1994}, 3.17c).
\end{aside}

\subsection{The Hodge standard conjecture}

Let $\mathcal{R}{}^{\ast}$ be a good theory of rational Tate classes on
$\mathcal{S}{}$, and fix a prime $l$. Let $X\in\mathcal{S}{}$, and let
$L\colon H_{l}^{r}(X)\rightarrow H_{l}^{r+2}(X)(1)$ be the Lefschetz operator
defined by a smooth hyperplane section of $X$. When $X$ is connected, the
\emph{primitive part } of $\mathcal{R}{}^{r}(X)$ is defined to be%
\[
\mathcal{R}^{r}(X)_{\mathrm{prim}}=\{z\in\mathcal{R}{}^{r}(X)\mid L^{\dim
X-2r+1}z=0\}.
\]
The next theorem shows that the Hodge standard conjecture holds for rational
Tate classes.

\begin{theorem}
\label{r33}For every connected $X\in\mathcal{S}{}$ and $r\leq\frac{1}{2}\dim
X$, the bilinear form $\theta^{r}$%
\begin{equation}
x,y\mapsto(-1)^{r}\langle L^{\dim X-2r}x\cdot y\rangle\colon\mathcal{R}{}%
^{r}(X)_{\mathrm{prim}}\times\mathcal{R}{}^{\dim X-r}(X)_{\mathrm{prim}%
}\rightarrow\mathcal{R}{}^{\dim X}(X)\simeq\mathbb{Q}{} \label{e24}%
\end{equation}
is positive definite.
\end{theorem}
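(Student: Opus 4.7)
The plan is to deduce Theorem \ref{r33} from the abstract polarization $\Pi$ on $\Mot(\mathbb{F}{})$ constructed in Theorem \ref{r31}, by lifting to CM and invoking the classical Hodge--Riemann bilinear relations. The key point is that on the motivic level $\theta^r$ becomes a Weil form, and its $\Pi$-positivity (which we transport from $\Pi^{\CM}$ via $R$) encodes exactly the desired positive-definiteness.

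First I would put $\theta^r$ into motivic form. Since $\pi_X$ is semisimple on $\mathcal{S}{}$, the Lefschetz standard conjectures $A(X)$, $B(X)$, $C(X)$, $D(X)$ hold for rational Tate classes (the theorem in Section 3.4). In particular $\Lambda$ and the K\"unneth projectors are rational Tate correspondences, so the primitive submotive $M := h^{2r}(X)(r)_{\mathrm{prim}}$ exists as a direct summand in $\Mot(\mathbb{F}{})$, and the form $\theta^r$ lifts to a symmetric morphism $\phi\colon M \otimes M \to \1$ with $\Hom(\1, M) = \mathcal{R}^r(X)_{\mathrm{prim}}$; moreover $A(X)$ and $D(X)$ together imply that $\phi$ is non-degenerate on $\mathcal{R}^r(X)_{\mathrm{prim}}$.

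Next I would reduce to an abelian variety and lift to CM. By Corollary \ref{r26} we may work in $\Mot(\mathbb{F}{};\mathcal{S}{}_0)$, so (after using the K\"unneth formula to dispose of projective-space factors, whose primitive cohomology is zero) we may assume $X=A$ is an abelian variety and $L$ comes from an ample divisor $D$. By the theorem of Zink used in the proof of Theorem \ref{r31}, the polarized pair $(A,D)$ is isogenous to the reduction of a polarized CM abelian variety $(\tilde A,\tilde D)$ over $\mathbb{Q}{}^{\mathrm{al}}$. Because $R$ respects cup products, reductions of ample divisors, Lefschetz decompositions, and trace maps, the analogous form $\tilde\phi$ on $\tilde M := h^{2r}(\tilde A)(r)_{\mathrm{prim}}$ in $\CM(\mathbb{Q}{}^{\mathrm{al}})$ satisfies $R(\tilde M,\tilde\phi) = (M,\phi)$.

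Then I would invoke the classical Hodge--Riemann bilinear relations on the Hodge realization $\omega_B(\tilde M) = H_B^{2r}(\tilde A_{\mathbb{C}{}})(r)_{\mathrm{prim}}$: these say precisely that $\tilde\phi$ is a polarization of the underlying weight-$0$ Hodge structure, i.e., that $\tilde\phi$ lies in the $\Pi^{\CM}$-positive cone. Since $R\colon \Pi^{\CM}\mapsto\Pi$ by Theorem \ref{r31}(b), the image $\phi$ lies in the $\Pi$-positive cone on $\Mot(\mathbb{F}{})$. By the general theory of polarized Tate triples over $\mathbb{Q}{}$ (\cite{deligneM1982}, \S 5), a $\Pi$-positive Weil form on a weight-$0$ object $M$ equips $M$ with a polarized real Hodge structure of type $(0,0)$ whose rational $(0,0)$-part is $\Hom(\1,M)$, and the polarization is positive-definite on this part. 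This gives positive-definiteness of $\theta^r$ on $\mathcal{R}^r(X)_{\mathrm{prim}}$; the non-degeneracy from step one serves as a fall-back to promote semi-definiteness to definiteness, should one prefer to extract only that weaker statement from the abstract theory.

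The main obstacle I foresee is step three, specifically the transfer of positivity from $\CM(\mathbb{Q}{}^{\mathrm{al}})$ to $\Mot(\mathbb{F}{})$: the functor $R$ need not be fully faithful on $\Hom(\1,-)$, so a naive restriction of classical Hodge--Riemann positivity would only cover the image of Hodge classes, which in general does not exhaust $\mathcal{R}^r(X)_{\mathrm{prim}}$. This forces us to work at the level of Weil forms and to exploit the full strength of the polarization $\Pi$ on $\Mot(\mathbb{F}{})$, rather than a term-by-term comparison of vector spaces --- it is precisely here that Theorem \ref{r31} does the essential work.
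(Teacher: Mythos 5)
Your proposal follows essentially the same route as the paper's proof: the paper realizes $\theta^{r}$ as $\Hom(\1,\vartheta^{r})$ for a pairing $\vartheta^{r}$ on the primitive submotive, shows $\vartheta^{r}$ is positive for the polarization $\Pi$ of Theorem \ref{r31} by lifting a polarized abelian variety over $\mathbb{F}{}$ to a polarized CM abelian variety via Zink and transporting positivity through $R\colon\Pi^{\CM}\mapsto\Pi$ (Lemma \ref{r35}), and then deduces positive definiteness of $\theta^{r}$ on $\Hom(\1,p^{r}(X))$ from that positivity (Lemma \ref{r34}). Your three steps correspond directly to this structure, so the argument is correct and not a genuinely different approach.
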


Let $d=\dim X$. Let $p^{r}(X)$ be the largest subobject of%
\[
\Ker(L^{d-2r+1}\colon h^{2r}(X)(r)\rightarrow h^{2d-2r+2}(X)(d-r+1)
\]
on which $\pi\overset{\text{{\tiny def}}}{=}\pi(\Mot(\mathbb{F}{}))$ acts
trivially. Then%
\[
\Hom(\1,p^{r}(X))=\mathcal{R}{}^{r}(X)_{\mathrm{prim}}%
\]
and there is a pairing%
\[
\vartheta^{r}\colon p^{r}(X)\otimes p^{r}(X)\rightarrow\1,
\]
also fixed by $\pi$, such that $\Hom(\1,\vartheta^{r})=\theta^{r}$. Theorem
\ref{r33} follows from Theorem \ref{r31} and the next two lemmas.

\begin{lemma}
\label{r34}If $\Mot(\mathbb{F}{})$ admits a polarization for which the forms
$\vartheta^{r}$ are positive, then the pairings $\theta^{r}$ are positive definite.
\end{lemma}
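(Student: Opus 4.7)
The plan is to transfer positivity from the motivic pairing $\vartheta^{r}$ to the bilinear form $\theta^{r}$ on the $\mathbb{Q}{}$-vector space $\mathcal{R}{}^{r}(X)_{\mathrm{prim}}$ by applying a real-valued fibre functor supplied by the polarization. Suppose $\Mot(\mathbb{F}{})$ carries a polarization $\Pi$ for which every $\vartheta^{r}$ is positive. The general theory of polarizations on Tate triples (\cite{deligneM1982}, \S 5) attaches to $\Pi$ a fibre functor $\omega$ over $\mathbb{R}{}$ on the relevant sub-Tate-triple with the property that, for every homogeneous weight-$0$ object $M$ of $\Mot(\mathbb{F}{})$ and every positive pairing $\phi\colon M\otimes M\to\1$, the induced real bilinear form $\omega(\phi)$ on $\omega(M)$ is symmetric and positive definite. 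Since $p^{r}(X)$ is a subobject of $h^{2r}(X)(r)$, it has weight $0$, so this applies to $M=p^{r}(X)$ with $\phi=\vartheta^{r}$.

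Next I express $\theta^{r}$ as the pull-back of $\omega(\vartheta^{r})$ along a concrete linear map. By definition $\mathcal{R}{}^{r}(X)_{\mathrm{prim}}=\Hom(\1,p^{r}(X))$, and any $x\in\Hom(\1,p^{r}(X))$ determines the vector $\tilde{x}\overset{\textup{{\tiny def}}}{=}\omega(x)(1)\in\omega(p^{r}(X))$. Faithfulness of the fibre functor guarantees $\tilde{x}=0\iff\omega(x)=0\iff x=0$. From $\theta^{r}=\Hom(\1,\vartheta^{r})$, applying $\omega$ to the composite
\[
\1\xrightarrow{x\otimes y}p^{r}(X)\otimes p^{r}(X)\xrightarrow{\vartheta^{r}}\1
\]
yields the identity $\theta^{r}(x,y)=\omega(\vartheta^{r})(\tilde{x},\tilde{y})$. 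Setting $y=x$, the positive definiteness of $\omega(\vartheta^{r})$ gives $\theta^{r}(x,x)\geq 0$ with equality only if $\tilde{x}=0$, hence only if $x=0$. So $\theta^{r}$ is positive definite on $\mathcal{R}{}^{r}(X)_{\mathrm{prim}}$.

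The only delicate ingredient is the invocation of a real fibre functor compatible with the polarization; once that Deligne--Milne characterization of positivity is granted, the remainder is purely formal, resting only on the faithfulness of $\omega$ and the naturality of the $\Hom$-level pairing $\theta^{r}=\Hom(\1,\vartheta^{r})$. Thus the main obstacle is not in the argument of the lemma itself but in the preceding step: producing the polarization on $\Mot(\mathbb{F}{})$ (Theorem \ref{r31}) and verifying that the specific forms $\vartheta^{r}$ are positive for it --- the lemma's task is only to cash in that positivity at the level of rational Tate classes.
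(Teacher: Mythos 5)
Your formal skeleton is fine: since $\theta^{r}=\Hom(\1,\vartheta^{r})$, for any tensor-compatible faithful functor $\omega$ one gets $\theta^{r}(x,y)=\omega(\vartheta^{r})(\tilde{x},\tilde{y})$ with $\tilde{x}=\omega(x)(1)$, and $x\mapsto\tilde{x}$ is injective, so positive definiteness of $\omega(\vartheta^{r})$ on the image would indeed give the lemma. The gap is in the positivity-transfer principle you invoke. For a polarization of a Tate triple, the compatible realization furnished by \cite{deligneM1982}, \S 5 does \emph{not} send a positive form $\phi$ on an arbitrary homogeneous object of weight $0$ to a positive definite form: positivity there is Hodge-theoretic, i.e.\ $\omega(\phi)(v,Cv)>0$ with $C$ the Weil operator, and $\omega(\phi)$ itself is definite only on the part of type $(0,0)$, having mixed signature in general (already for weight-$0$ twists of the cohomology of an abelian variety, e.g.\ the Hodge--Riemann forms on $h^{2r}(A)(r)$, definiteness fails whenever classes of type $\neq(r,r)$ are present). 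Since the only property of $p^{r}(X)$ you use is that it has weight $0$, your argument as written would ``prove'' definiteness of the form on the whole primitive part of $h^{2r}(X)(r)$, which is false; this is exactly why the Hodge standard conjecture is not a formal consequence of the existence of a polarization.

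What saves the lemma --- and is the whole point of the definition of $p^{r}(X)$ --- is that the fundamental group $\pi(\Mot(\mathbb{F}{}))$ acts \emph{trivially} on $p^{r}(X)$, so $p^{r}(X)\simeq\1^{n}$ is a trivial object: under any realization compatible with the polarization it lands in type $(0,0)$, where $C=\id$, and only then does positivity of $\vartheta^{r}$ give positive definiteness of $\omega(\vartheta^{r})$, after which your restriction argument applies verbatim. Alternatively one can avoid fibre functors altogether: a Weil form on $\1^{n}$ of weight $0$ is a rational symmetric form whose adjoint involution has positive trace form, hence is $\pm$-definite, and the polarization axioms (e.g.\ $\Pi(\1)$ contains $x\otimes y\mapsto xy$, and $\Pi$ is stable under direct sums) force the distinguished class on $\1^{n}$ to consist exactly of the positive definite forms, so $\vartheta^{r}\in\Pi(p^{r}(X))$ gives $\theta^{r}$ positive definite directly. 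The paper itself only cites the proof of \cite{milne2002p}, 4.5, whose content is precisely this positivity transfer for objects with trivial $\pi$-action; so your route is close in spirit, but the decisive step must use triviality of the $\pi$-action on $p^{r}(X)$, not merely that it has weight $0$.
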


\begin{proof}
See the proof of \cite{milne2002p}, 4.5.
\end{proof}

\begin{lemma}
\label{r35}If $\Pi$ is a polarization of $\Mot(\mathbb{F}{})$ for which
$R\colon\Pi^{\CM}\mapsto\Pi$, then the forms $\vartheta^{r}$ are positive for
$\Pi$.
\end{lemma}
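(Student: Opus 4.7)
The plan is to transfer positivity from characteristic zero: realize $\vartheta^{r}$ as the image under $R$ of an analogous form $\tilde{\vartheta}^{r}$ on a CM lift, on which classical Hodge theory already gives positivity for $\Pi^{\CM}$. Since by hypothesis $R\colon\Pi^{\CM}\mapsto\Pi$, and polarization-preserving tensor functors send positive forms to positive forms, this would yield the conclusion.

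First I would reduce to the case where $X$ is an abelian variety $A$ over $\mathbb{F}{}$ with Lefschetz operator $L$ induced by an ample divisor class. The equivalence $\Mot(\mathbb{F}{};\mathcal{S}{}_{0})\to\Mot(\mathbb{F}{};\mathcal{S}{})$ of Corollary \ref{r26} lets one recognize every homogeneous object of $\Mot(\mathbb{F}{})$ inside the abelian-variety subcategory, and the operator $L$ acts through cup-product with a divisor class, which is rational Tate by (R2). Next I would lift $(A,L)$: by Zink's theorem (\cite{zink1983}, 2.7), already invoked in the proof of Theorem \ref{r31}, the polarized abelian variety is isogenous to the reduction of a polarized CM abelian variety $(\tilde{A},\tilde{L})$ over $\mathbb{Q}{}^{\mathrm{al}}$. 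Because divisor classes, cup products, and the projector onto $\ker L^{d-2r+1}$ are all compatible with the specialization map, the functor $R$ sends the primitive subobject $p^{r}(\tilde{A})\subset h^{2r}(\tilde{A})(r)$ and its bilinear form $\tilde{\vartheta}^{r}$ to $p^{r}(A)$ and $\vartheta^{r}$ respectively.

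Finally, apply classical Hodge theory on $\tilde{A}_{\mathbb{C}{}}$. The Hodge--Riemann bilinear relations for the polarized rational Hodge structure $H^{2r}(\tilde{A}_{\mathbb{C}{}},\mathbb{Q}{})(r)$ state that $\tilde{\vartheta}^{r}$ is positive definite on the real primitive subspace. Translated into the tannakian formalism of polarizations on Tate triples (\cite{deligneM1982}, \S 5), this is precisely the assertion that $\tilde{\vartheta}^{r}$ lies in the positive class for the geometric polarization $\Pi^{\CM}$ of $\CM(\mathbb{Q}{}^{\mathrm{al}})$. Applying $R$ and using $R\colon\Pi^{\CM}\mapsto\Pi$ then gives positivity of $\vartheta^{r}=R(\tilde{\vartheta}^{r})$ for $\Pi$.

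The main obstacle is the reduction step when $X$ is not itself an abelian variety: the Lefschetz operator $L$ on $X$ appears in the very definition of $\vartheta^{r}$, and it is not tautological that this $L$ is transported under the equivalence $\Mot(\mathbb{F}{};\mathcal{S}{}_{0})\simeq\Mot(\mathbb{F}{};\mathcal{S}{})$ to the restriction of a geometric Lefschetz operator on an abelian variety. The clean way around this is likely the abstract tannakian observation that positivity of a form on a homogeneous object of $\Mot(\mathbb{F}{})$ is inherited from positivity on any object into which it embeds (or of which it is a subquotient) in the essential image of $R$; this lets the single input $R\colon\Pi^{\CM}\mapsto\Pi$ propagate positivity to every primitive Lefschetz form in $\Mot(\mathbb{F}{})$.
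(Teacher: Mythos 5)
Your proposal follows essentially the same route as the paper: lift the polarized abelian variety over $\mathbb{F}{}$ via Zink's theorem to a polarized CM abelian variety, use positivity of the geometric forms for $\Pi^{\CM}$ (the paper cites \cite{saavedra1972}, VI 4.4, which encodes the Hodge--Riemann relations you invoke), identify $\vartheta^{r}$ with the restriction of such a form to the primitive subobject $p^{r}(A)$ and apply \cite{deligneM1982}, 4.11b, then transfer by the hypothesis $R\colon\Pi^{\CM}\mapsto\Pi$. The obstacle you flag for non-abelian $X$ is handled in the paper exactly by the observation you suggest, namely that every object of $\Mot(\mathbb{F}{})$ is a direct factor of the motive of an abelian variety, with positivity passing to restrictions of positive forms on subobjects.
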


\begin{pf}
[following Milne \textnf{\citeyear{milne2002p}}, 4.5)]Let $A_{1}$ be a
polarized abelian variety over $\mathbb{F}{}$. According to \cite{zink1983},
there exists an abelian variety $A$ over $\mathbb{Q}{}^{\mathrm{al}}$ and an
isogeny $A_{0}\rightarrow A_{1}$. The bilinear forms%
\[
\varphi^{r}\colon h^{r}A\otimes h^{r}A\overset{\id\otimes\ast}{\longrightarrow
}h^{r}A\otimes h^{2d-r}(A)(d-r)\rightarrow h^{2n}(A)(d-r)\simeq\1(-r)
\]
are positive for the polarization $\Pi^{\CM}$ (cf. \cite{saavedra1972}, VI
4.4) --- here $d=\dim A$ and $\ast$ is defined by the given polarization on
$A$. The restriction of $\varphi^{2r}\otimes\id_{\1(2r)}$ to the subobject
$p^{r}(A)$ of $h^{2r}(A)(r)$ is of the form $\vartheta^{r}$, which is
therefore positive (\cite{deligneM1982}, 4.11b). Because of the isogeny
$A_{0}\rightarrow A_{1}$ and our hypothesis on $\Pi$, the similar statement is
true for $A_{1}$. As every object of $\Mot(\mathbb{F}{})$ is a direct factor
of the motive of an abelian variety, this proves the result.
\end{pf}

\begin{corollary}
\label{r36}If there exists a good theory of rational Tate classes such that
all algebraic classes are rational Tate classes, then the Hodge standard
conjecture holds for all $X\in\mathcal{S}{}$.
\end{corollary}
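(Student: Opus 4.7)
The plan is to deduce this as an almost immediate consequence of Theorem \ref{r33}, with the only work being to check that the relevant positive definiteness statement for algebraic classes is obtained by restricting the one already established for rational Tate classes.

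First I would fix a good theory of rational Tate classes $\mathcal{R}^{\ast}$ on $\mathcal{S}$ such that, for every $X\in\mathcal{S}$, the $\mathbb{Q}$-subalgebra $\mathcal{A}^{\ast}(X)$ of $H_{\mathbb{A}}^{2\ast}(X)(\ast)$ generated by the algebraic classes is contained in $\mathcal{R}^{\ast}(X)$. Let $X\in\mathcal{S}$ be connected of dimension $d$, let $L$ be the Lefschetz operator attached to a smooth hyperplane section, and let $\mathcal{A}^{r}(X)_{\mathrm{prim}}=\{z\in\mathcal{A}^{r}(X)\mid L^{d-2r+1}z=0\}$. Since $L$ is a divisor class, it lies in $\mathcal{A}^{1}(X)\subset\mathcal{R}^{1}(X)$, so the primitive subspaces on the two sides are compatible, and the hypothesis gives an inclusion
\[
\mathcal{A}^{r}(X)_{\mathrm{prim}}\subset\mathcal{R}^{r}(X)_{\mathrm{prim}}
\]
for each $r\leq d/2$.

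Next I would invoke Theorem \ref{r33}: under our hypothesis, the bilinear form $\theta^{r}$ in (\ref{e24}) is positive definite on $\mathcal{R}^{r}(X)_{\mathrm{prim}}$. The Hodge standard pairing on the primitive algebraic classes is, by definition, the restriction of $\theta^{r}$ to $\mathcal{A}^{r}(X)_{\mathrm{prim}}\times\mathcal{A}^{d-r}(X)_{\mathrm{prim}}$ (more precisely, one considers the diagonal pairing $x,y\mapsto(-1)^{r}\langle L^{d-2r}x\cdot y\rangle$ on $\mathcal{A}^{r}(X)_{\mathrm{prim}}$, which is symmetric and again the restriction of $\theta^{r}$). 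A restriction of a positive definite symmetric bilinear form is positive definite, and this gives the Hodge standard conjecture for $X$.

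There is essentially no obstacle here: the content of the statement is contained in Theorem \ref{r33}, and the corollary is the observation that the Hodge standard conjecture is a statement \emph{purely about} algebraic classes, which sit inside $\mathcal{R}^{\ast}(X)$ by hypothesis. The only thing to watch is the compatibility of the primitive decompositions, which is automatic once one knows that $L\in\mathcal{A}^{1}(X)\subset\mathcal{R}^{1}(X)$.
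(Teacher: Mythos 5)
Your proposal is correct and follows essentially the same route as the paper: both deduce the corollary from Theorem \ref{r33} by observing that the Hodge standard pairing on primitive algebraic classes is the restriction of the positive definite form on $\mathcal{R}^{r}(X)_{\mathrm{prim}}$ (the paper phrases this via the quadratic form $x\mapsto\langle x\cdot\ast x\rangle$, but the content is the same restriction argument). Your explicit check that $L$ lies in both algebras, so that $\mathcal{A}^{r}(X)_{\mathrm{prim}}\subset\mathcal{R}^{r}(X)_{\mathrm{prim}}$, is a point the paper leaves implicit.
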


\begin{proof}
The form (\ref{e24}) is positive definite if and only if the quadratic form
$x\mapsto\langle x\!\cdot\! \ast x\rangle$ on $\mathcal{R}{}^{r}%
(X)_{\text{\textrm{prim}}}$ is positive definite. The restriction of a
positive definite quadratic form to a subspace is positive definite.
\end{proof}

\begin{remark}
\label{r23c}Let $\mathcal{S}{}$ contain all varieties over an algebraically
closed field $k$, and let $H_{W}$ be a Weil cohomology theory with coefficient
field $Q$. Andr\'{e} (1996) \nocite{andre1996}defines a countable subfield
$Q_{0}$ of $Q$ and constructs a family $(\mathcal{\mathcal{R}{}}^{\ast
}(X))_{X\in\mathcal{S}{}}$ of $Q{}_{0}$-subalgebras of $H_{W}^{2\ast}%
(X)(\ast)$ that is the smallest containing the algebraic classes, the
Lefschetz operator $\Lambda$, and satisfying (R1) --- the elements of
$\mathcal{R}{}^{\ast}(X)$ are called the \emph{motivated classes} on $X$. When
$H_{W}$ is $\ell$-adic \'{e}tale cohomology with $\ell$ distinct from the
characteristic of $k$, he has proved the following:

\begin{enumerate}
\item the motivated classes on abelian varieties in characteristic zero are
exactly the Hodge classes (\cite{andre1996});

\item the motivated classes on a CM abelian variety over $\mathbb{Q}%
{}^{\mathrm{al}}$ specialize to motivated classes on $A_{0}$
(\cite{andre2006b}, 2.4.1).
\end{enumerate}

\noindent On applying the obvious variant of Theorem \ref{r21}, one finds that
the motivated classes on abelian varieties over $\mathbb{F}{}$ form a theory
of rational Tate classes in $H_{\ell}$ (in the sense on p\pageref{standard}),
except that $\mathbb{Q}{}$ must be replaced by $Q_{0}$. In particular, the
space of motivated classes in $H_{\ell}^{2\ast}(A_{0})(\ast)$ is a $Q_{0}%
$-structure on $\mathcal{T}^{\ast}(A_{0})$. If $Q_{0}$ is formally real, the
obvious variant of Theorem \ref{r33} implies the Hodge standard conjecture for
abelian varieties over $\mathbb{F}{}$.
\end{remark}

\subsection{Finite fields}

Suppose that we have a good theory of rational Tate classes $\mathcal{R}{}$ on
some class $\mathcal{S}{}$ of varieties over $\mathbb{F}{}$. For any variety
$X$ over a finite subfield $\mathbb{F}{}_{q}$ of $\mathbb{F}{}$ such that
$X_{\mathbb{F}{}}\in\mathcal{S}{}$, $\Gal(\mathbb{F}{}/\mathbb{F}{}_{q})$ acts
through a finite quotient on $\mathcal{R}{}^{\ast}(X_{\mathbb{F}{}})$ because
it acts continuously, and a countable profinite group is finite. In this case,
we define%
\[
\mathcal{R}{}^{\ast}(X)=\mathcal{R}{}^{\ast}(X_{\mathbb{F}{}}%
)^{\Gal(\mathbb{F}{}/\mathbb{F}_{q})}.
\]

\section{The rationality conjecture}

In this section, I state a conjecture that has many of the same consequences
for motives over $\mathbb{F}{}$ as the Hodge conjecture for CM abelian
varieties but appears to be much more accessible.

\subsection{Statement}

\begin{rc1}
\label{rc1}Let $A$ be an abelian variety over $\mathbb{Q}{}^{\mathrm{al}}$
with good reduction to an abelian variety $A_{0}$ over $\mathbb{F}{}$. The cup
product of the specialization to $A_{0}$ of any Hodge class on $A$ with any
Lefschetz class of complementary dimension lies in $\mathbb{Q}{}$.
\end{rc1}

In more detail, a Hodge class on $A$ is an element of $\gamma$ of
$H_{\mathbb{A}{}}^{2\ast}(A)(\ast)$ and its specialization $\gamma_{0}$ is an
element of $H_{\mathbb{A}{}}^{2\ast}(A_{0})(\ast)$. Thus the cup product
$\gamma_{0}\cup\delta$ of $\gamma_{0}$ with a Lefschetz class of complementary
dimension $\delta$ lies in
\[
H_{\mathbb{A}{}}^{2d}(A_{0})(d)\simeq\mathbb{A}{}_{f}^{p}\times\mathbb{Q}%
{}_{w}^{\mathrm{al}},\quad d=\dim(A).
\]
The conjecture says that it lies in $\mathbb{Q}{}\subset\mathbb{A}{}_{f}%
^{p}\times\mathbb{Q}{}_{w}^{\mathrm{al}}$. Equivalently, it says that the
$l$-component of $\gamma_{0}\cup\delta$ is a rational number independent of
$l$.

The conjecture is true for a particular $\gamma$ if $\gamma_{0}$ is algebraic.
Therefore, the conjecture is implied by the Hodge conjecture for abelian
varieties (or even by the weaker statement that the Hodge classes specialize
to algebraic classes).

\begin{example}
\label{r75}If $A$ is a CM abelian variety such that $A_{0}$ is simple and
ordinary, then the rationality conjecture holds for $A$ and its powers. To see
this, note that the hypotheses imply that $\End^{0}(A_{0})\simeq\End^{0}(A)$,
which is a CM-field of degree $2\dim A$. This isomorphism defines an
isomorphism $L(A_{0})\simeq L(A)$ of Lefschetz groups, and hence the
specialization map $H_{\mathbb{A}{}}^{2\ast}(A^{n})(\ast)\rightarrow
H_{\mathbb{A}{}}^{2\ast}(A_{0}^{n})(\ast)$ defines an isomorphism
$\mathcal{D}^{\ast}(A^{n})\simeq\mathcal{D}{}^{\ast}(A_{0}^{n})$ on the
Lefschetz classes for all $n$. In other words, every Lefschetz class $\delta$
on $A_{0}^{n}$ lifts uniquely to a Lefschetz class $\delta^{\prime}$ on
$A^{n}$, and so
\[
\gamma_{0}\cup\delta=\gamma\cup\delta^{\prime}\in\mathbb{Q}{}.
\]

\end{example}

\begin{definition}
\label{r76}Let $A$ be an abelian variety over $\mathbb{Q}{}^{\mathrm{al}}$
with good reduction to an abelian variety $A_{0}$ over $\mathbb{F}{}$. A Hodge
class $\gamma$ on $A$ is \emph{locally }$w$\emph{-Lefschetz} if its image
$\gamma_{0}$ in $H_{\mathbb{A}{}}^{2\ast}(A_{0})(\ast)$ is in the
$\mathbb{A}{}$-span of the Lefschetz classes, and it is $w$\emph{-Lefschetz}
if $\gamma_{0}$ is itself a Lefschetz class.
\end{definition}

\begin{rc2}
\label{rc2}Let $A$ be an abelian variety over $\mathbb{Q}{}^{\mathrm{al}}$
with good reduction to an abelian variety $A_{0}$ over $\mathbb{F}{}$. Every
locally $w$-Lefschetz Hodge class on $A$ is $w$-Lefschetz.
\end{rc2}

Notice that $\gamma_{0}$ is locally $w$-Lefschetz if and only if it is fixed
by $L(A_{0})$. Therefore, the conjecture asserts that a Hodge class on $A$
fixed by $L(A_{0})$ specializes to a Lefschetz class on $A_{0}$. Equivalently,
$\mathcal{B}{}^{\ast}(A)\cap\mathcal{D}{}^{\ast}(A_{0})$ is a $\mathbb{Q}{}%
$-structure on $\mathcal{B}^{\ast}(A)_{\mathbb{A}{}}\cap\mathcal{D}{}^{\ast
}(A_{0})_{\mathbb{A}{}}$ (intersections inside $H_{\mathbb{A}{}}^{2\ast}%
(A_{0})(\ast)$) (see \ref{r83} below).

\begin{theorem}
\label{r77}The following statements are equivalent:

\begin{enumerate}
\item The rationality conjecture holds for all CM abelian varieties over
$\mathbb{Q}{}^{\mathrm{al}}$.

\item The weak rationality conjecture holds for all CM abelian varieties over
$\mathbb{Q}{}^{\mathrm{al}}$.

\item There exists a good theory of rational Tate classes on $\mathcal{S}%
{}_{0}$.

\item There exists a commutative diagram of tannakian categories as in
(\ref{e18}) bound by the diagram of fundamental groups at right in (\ref{e18})
and, for every $l$ (including $l=p$) there exists a fibre functor $\omega_{l}$
on $\Mot(\mathbb{F}{})$ such that $\omega_{l}\circ R$ and $\omega_{l}\circ I$
are equal to the standard fibre functors.
\end{enumerate}
\end{theorem}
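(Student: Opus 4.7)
The plan is to establish the cycle $(\text{c})\Rightarrow(\text{a})\Rightarrow(\text{b})\Rightarrow(\text{c})$ together with the equivalence $(\text{c})\Leftrightarrow(\text{d})$. The latter is essentially already assembled in Section 3: from a good theory $\mathcal{R}{}^*$ one builds the vertical arrows and Tate-triple structure in diagram (\ref{e18}), along with $l$-adic fibre functors $\omega_l$ on $\Mot(\mathbb{F}{})$ (Theorem \ref{r20}, Proposition \ref{r25}, and the discussion preceding Lemma \ref{r30}); conversely, given such a diagram with compatible fibre functors, Corollary \ref{r28} identifies the resulting $\mathbb{Q}{}$-structure on $\omega_{\mathbb{A}{}}|\LMot(\mathbb{F})^{P}$ with a good theory of rational Tate classes on $\mathcal{S}{}_0$.

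For $(\text{c})\Rightarrow(\text{a})$: if $\mathcal{R}{}^*$ is a good theory, then by (R3) the specialization $\gamma_0$ of a Hodge class lies in $\mathcal{R}{}^*(A_0)$, and by (R2) with closure under cup products any Lefschetz class $\delta$ of complementary dimension lies there too; hence $\gamma_0\cup\delta\in\mathcal{R}{}^{\dim A}(A_0)\simeq\mathbb{Q}{}$ by (\ref{r9}). For $(\text{a})\Rightarrow(\text{b})$: if $\gamma$ is Hodge with $\gamma_0\in\mathbb{A}{}\cdot\mathcal{D}{}^r(A_0)$, write $\gamma_0=\sum a_i\delta_i$ with $a_i\in\mathbb{A}{}$ and $(\delta_i)$ a $\mathbb{Q}{}$-basis of $\mathcal{D}{}^r(A_0)$. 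Since the Frobenius of an abelian variety is semisimple (Weil), the cup-product pairing $\mathcal{D}{}^r(A_0)\times\mathcal{D}{}^{d-r}(A_0)\to\mathbb{Q}{}$ is nondegenerate (apply Proposition \ref{r15a} to the Lefschetz theory), so a dual $\mathbb{Q}{}$-basis $(\delta'_j)$ exists and $a_j=\gamma_0\cup\delta'_j\in\mathbb{Q}{}$ by (a). Hence $\gamma_0\in\mathcal{D}{}^r(A_0)$, proving (b).

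For $(\text{b})\Rightarrow(\text{c})$ the goal is to produce a family $(\mathcal{R}{}^*(X))_{X\in\mathcal{S}{}_0}$ verifying (R1), (R2), (R3), and the weaker (R4*), then invoke Theorem \ref{r21}. Following the pattern of that theorem's proof, fix a CM subfield $K\subset\mathbb{C}{}$ that is finite Galois over $\mathbb{Q}{}$ and contains an imaginary quadratic subfield in which $p$ splits, and choose a CM abelian variety $A$ over $\mathbb{Q}{}^{\mathrm{al}}$ split by $K$ large enough that every simple CM abelian variety split by $K$ is isogenous to a subvariety of $A$; let $A_0$ be its reduction. Define $\mathcal{R}{}^*(A_0^n)\subset H_{\mathbb{A}{}}^{2*}(A_0^n)(*)$ to be the $\mathbb{Q}{}$-subalgebra generated by the specializations of the Hodge classes on $A^n$ together with the Lefschetz classes on $A_0^n$, close the family under the pullbacks and pushforwards demanded by (R1), and extend to all of $\mathcal{S}{}_0$ by adjoining projective-space factors. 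Conditions (R2) and (R3) hold by construction, and since both generators are Tate, the image of $\mathcal{R}{}^*$ in each $H^{2*}_l$ lies in $\mathcal{T}{}^*_l$.

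The main obstacle is the remaining half of (R4*), that $\mathcal{R}{}^*(X)$ is of finite $\mathbb{Q}{}$-degree, and this is precisely the point at which (b) must be used. For each $l$, the tannakian identity $P(A_0)=\MT(A)\cap L(A_0)$ from \cite{milne1999lm} (equation (\ref{e9}) in the proof of Theorem \ref{r21}) shows that the $\mathbb{Q}{}_l$-span of the specialized Hodge classes and the Lefschetz classes already exhausts $\mathcal{T}{}^*_l(X)$; hypothesis (b) then forces any $\mathbb{A}{}$-linear relation between a specialized Hodge class and Lefschetz classes to descend to $\mathbb{Q}{}$, so that the $\mathbb{Q}{}$-algebra they generate has finite $\mathbb{Q}{}$-dimension equal to $\dim_{\mathbb{Q}{}_l}\mathcal{T}{}^*_l(X)$ and, after extension of scalars to $\mathbb{Q}{}_l$, injects as a $\mathbb{Q}{}$-structure. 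With (R1)--(R3) and (R4*) in place, Theorem \ref{r21} promotes the construction to a good theory of rational Tate classes, completing the cycle (and automatically producing the uniqueness asserted in Theorem \ref{r22}).
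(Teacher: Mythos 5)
Your steps (c)$\Rightarrow$(a), (a)$\Rightarrow$(b) and (d)$\Rightarrow$(c) match the paper (for (a)$\Rightarrow$(b) the nondegeneracy of the pairing on Lefschetz classes should be quoted from \cite{milne1999lc}, 5.2, 5.3, rather than from Proposition \ref{r15a}, which is a statement about a theory of rational Tate classes, not about the Lefschetz classes alone; this is a minor point). The genuine gap is in your step (b)$\Rightarrow$(c), which is exactly where the paper invests its real work: it proves (b)$\Leftrightarrow$(d) by the quotient formalism for tannakian categories (\ref{q1}, \ref{q2}, Theorem \ref{r87}), using the exact sequence $0\rightarrow S/P\rightarrow T/L\rightarrow T/(S\cdot L)\rightarrow0$ of \cite{milne1999lm} to realize $\Mot(\mathbb{F})$ simultaneously as a quotient of $\CM(\mathbb{Q}^{\mathrm{al}})$ and of $\LMot(\mathbb{F})$, the needed compatibility $\omega^{J}|=\omega^{R^{L}}|$ being precisely the weak rationality conjecture by Remark \ref{r82}; then (d)$\Rightarrow$(c) via Proposition \ref{r25} delivers the full (R4). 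Your substitute --- generate $\mathcal{R}^{\ast}$ by specialized Hodge classes and Lefschetz classes and feed it to Theorem \ref{r21} --- does not go through as sketched.

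Concretely: (i) fixing one field $K$ and one large $A$ does not give (R3), which demands that the Hodge classes of \emph{every} CM abelian variety over $\mathbb{Q}^{\mathrm{al}}$ (all CM fields, and in particular all the mutually non-isogenous CM lifts of a fixed $A_{0}$) specialize into $\mathcal{R}^{\ast}(A_{0})$; if instead you enlarge the generating set to all lifts so as to secure (R3), then the finite-degree half of (R4*) becomes the whole difficulty, and (b) does not supply it: the weak rationality conjecture only constrains Hodge classes whose specializations lie in the $\mathbb{A}$-span of Lefschetz classes, and says nothing about $\mathbb{A}$-linear relations among mixed products of specialized Hodge classes with (non-liftable) Lefschetz classes, which is what your ``every $\mathbb{A}$-linear relation descends to $\mathbb{Q}$'' claim needs. (ii) The assertion that $P(A_{0})=\MT(A)\cap L(A_{0})$ already shows the $\mathbb{Q}_{l}$-span of specialized Hodge classes and Lefschetz classes exhausts $\mathcal{T}_{l}^{\ast}$ is unjustified: invariants of an intersection of groups are not in general generated by the invariants of the two groups; in the paper this conclusion is extracted from Corollary \ref{r6c}, which can only be applied after the nondegeneracy hypotheses of Theorem \ref{r3}/\ref{r6t} have been verified for the full candidate theory (finite degree, stability under Lefschetz correspondences, $\dim_{\mathbb{Q}}\mathcal{R}^{\dim A}=1$). (iii) Even that normalization $\mathcal{R}^{\dim A_{0}}(A_{0})\simeq\mathbb{Q}$ is, for your candidate algebra, equivalent to the rationality statement (a) (it says exactly that $\langle\gamma_{0}\cup\delta\rangle\in\mathbb{Q}$ for mixed top-degree products), and (a) is not available to you when proving (b)$\Rightarrow$(c); in the paper (a) is only recovered at the end of the cycle from (c). So the tannakian quotient argument is not an optional packaging but the missing idea, and your proposal has no substitute for it.
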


\begin{proof}
(a)$\implies$(b): Choose a $\mathbb{Q}{}$-basis $e_{1},\ldots,e_{t}$ for the
space of Lefschetz classes of codimension $r$ on $A_{0}$, and let
$f_{1},\ldots,f_{t}$ be the dual basis for the space of Lefschetz classes of
complementary dimension (here we use \cite{milne1999lc}, 5.2, 5.3). If
$\gamma$ is a locally $w$-Lefschetz class of codimension $r$, then $\gamma
_{0}=\sum c_{i}e_{i}$ for some $c_{i}\in\mathbb{A}{}$. Now%
\[
\langle\gamma_{0}\cup f_{j}\rangle=c_{j}%
\]
which the rationality conjecture implies lies in $\mathbb{Q}{}$.

(c)$\implies$(a): If there exists a good theory $\mathcal{R}{}$ of rational
Tate classes, then certainly the rationality conjecture is true, because then
$\langle\gamma_{0}\cup\delta\rangle\in\mathcal{R}{}^{\dim A}\simeq\mathbb{Q}%
{}$.

(d)$\implies$(c):We saw in Proposition \ref{r25} that a quotient functor
$q\colon\LMot(\mathbb{F})\rightarrow\mathsf{M}$ with certain properties gives
rise to a theory of rational Tate classes on $\mathcal{S}{}_{0}$. The
existence of the commutative square at the left of (\ref{e18}) implies that
the theory is good.

We shall complete the proof of the theorem in the next subsection by proving
that (b)$\iff$(d).
\end{proof}

\begin{remark}
\label{r82}Let $A$ be a CM abelian variety over $\mathbb{Q}{}^{\mathrm{al}}$.
For each $r$,
\begin{align*}
H_{\mathbb{A}{}}^{2r}(A)(r)^{L(A_{0})\cdot MT(A)}  &  \subset H_{\mathbb{A}{}%
}^{2r}(A)(r)^{MT(A)}\simeq\mathcal{B}{}^{r}(A)\otimes_{\mathbb{Q}{}}%
\mathbb{A}{}\\
H_{\mathbb{A}{}}^{2r}(A)(r)^{L(A_{0})\cdot MT(A)}  &  \subset H_{\mathbb{A}{}%
}^{2r}(A)(r)^{L(A_{0})}\simeq\mathcal{\mathcal{D}{}}{}^{r}(A_{0}%
)\otimes_{\mathbb{Q}{}}\mathbb{A}\text{.}%
\end{align*}
It follows that there are two $\mathbb{Q}{}$-structures on $H_{\mathbb{A}{}%
}^{2r}(A)(r)^{L(A_{0})\cdot MT(A)}$, namely, its intersection with
$\mathcal{B}{}^{r}(A)$ and its intersection with $\mathcal{D}{}^{r}(A_{0})$.
Conjecture \ref{rc2} is the statement that these two $\mathbb{Q}{}$-structures
are equal.
\end{remark}

\begin{remark}
\label{r83}Let $A$ be a CM abelian variety over $\mathbb{Q}{}^{\mathrm{al}}$.
For each $r$ and $\ell\neq p$,%
\[
\mathcal{B}{}^{r}(A)\otimes\mathbb{Q}{}_{\ell}\hookrightarrow H_{\ell}%
^{2r}(A)(r)\simeq H_{\ell}^{2r}(A_{0})(r)\hookleftarrow\mathcal{D}{}^{r}%
(A_{0})\otimes\mathbb{Q}{}_{\ell}.
\]
Conjecture \ref{rc2} states that $\mathcal{B}^{r}(A)\cap\mathcal{D}{}%
^{r}(A_{0})$ is a $\mathbb{Q}{}$-structure on $\left(  \mathcal{B}{}%
^{r}(A)\otimes\mathbb{Q}{}_{\ell}\right)  \cap\left(  \mathcal{D}{}^{r}%
(A_{0})\otimes\mathbb{Q}{}_{\ell}\right)  $ (for all $\ell\neq p$, and also
the analogous statement for $p$).
\end{remark}

\begin{aside}
\label{r85}It is conjectured that, in the case of good reduction, every
$\mathbb{F}{}$-point on a Shimura variety lifts to a \textit{special} point
(special lift conjecture).\footnote{This conjecture arose when the author was
extending the statement of the conjecture of Langlands and Rapoport from
Shimura varieties defined by reductive groups with simply connected derived
group to all Shimura varieties (see \cite{milne1992}). A proof of it has been
announce by \citet{vasiu2003cm}} This conjecture implies that, given an
abelian variety $A$ over $\mathbb{Q}{}^{\mathrm{al}}$ with good reduction to
an abelian variety $A_{0}$ over $\mathbb{F}{}$ and a Hodge class $\gamma$ on
$A$, there exists a CM abelian variety $A^{\prime}$ over $\mathbb{Q}%
{}^{\mathrm{al}}$ and a Hodge class $\gamma^{\prime}$ on $A^{\prime}$ for
which there exists an isogeny $A_{0}^{\prime}\rightarrow A_{0}$ sending
$\gamma_{0}^{\prime}$ to $\gamma_{0}$. From this it follows that the
rationality conjecture for CM abelian varieties implies the rationality
conjecture for all abelian varieties.
\end{aside}

\begin{aside}
Deligne (2000) notes that the following corollary of the Hodge conjecture
would be particularly interesting: let $A_{1}$ and $A_{2}$ be two liftings of
an abelian variety $A_{0}/\mathbb{F}{}$ to characteristic zero, and let
$\gamma_{1}$ and $\gamma_{2}$ be Hodge classes of complementary dimension on
$A_{1}$ and $A_{2}$; then $\left(  \gamma_{1}\right)  _{0}\cup\left(
\gamma_{2}\right)  _{0}\in\mathbb{Q}{}$. This is implied by the conjunction of
the rationality conjecture for CM abelian varieties and the special lift conjecture.
\end{aside}

\subsection{The rationality conjecture and the existence of good rational Tate
classes}

Assume, for the moment, that we have a good theory of rational Tate classes on
$\mathcal{S}{}_{0}$. Then the diagrams in (\ref{e18}) can be extended as
follows:%
\[
\begin{CD}
\mathsf{CM}^P,\omega^R @<<< \LCM^{L},\omega^{R^L}@<<< \LCM^{L\cdot S}\\
@VVV@VVV@VVV\\
\mathsf{CM} @<J<< \LCM@<<<\LCM^{S},\omega^J\\
@VV{R}V@VV{R^{L}}V@VVV\\
\Mot @<I<<\LMot@<<<\LMot^{P},\omega^I%
\end{CD}\quad\quad\minCDarrowwidth20pt \begin{CD}
S/P @>>> T/L @>>> T/L\cdot S\\
@AAA@AAA@AAA\\
S @>>> T @>>> T/S\\
@AAA@AAA@AAA \\
P @>>> L @>>> L/P.
\end{CD}
\]

\noindent Here, each of the functors $R$, $R^{L}$, $I$, and $J$ is a quotient
functor. In summary:
\begin{align*}
\Mot\left(  \mathbb{F}{}\right)    & =\CM(\mathbb{Q}^{\text{al}})/\omega
^{R}\text{ with }\omega^{R}\text{ the }\mathbb{Q}{}\text{-valued fibre functor
}X\rightsquigarrow\Hom_{\Mot}(\1,X)\text{ on }\CM(\mathbb{Q}^{\text{al}}%
)^{P};\\
\LMot(\mathbb{F}){}  & =\LCM(\mathbb{Q}{}^{\text{al}})/\omega^{R^{L}}\text{
with }\omega^{R^{L}}(X)=\Hom_{\LMot(\mathbb{F})}(\1,R^{L}X)\text{ for }X\text{
in }\LCM(\mathbb{Q}{}^{\text{al}})^{L};\\
\Mot(\mathbb{F}{})  & =\LMot(\mathbb{F})/\omega^{I}\text{ with }\omega
^{I}(X)=\Hom_{\Mot(\mathbb{F}{})}(\1,IX)\text{ for }X\text{ in }%
\LMot(\mathbb{F})^{P};\\
\CM(\mathbb{Q}^{\text{al}})  & =\LCM(\mathbb{Q}{}^{\text{al}})/\omega
^{J}\text{ with }\omega^{J}(X)=\Hom_{\mathsf{CM}(\mathbb{Q}{}^{\mathrm{al}}%
)}(\1,JX)\text{ for }X\text{ in }\LCM(\mathbb{Q}{}^{\text{al}})^{S}.
\end{align*}

For a fibre functor $\omega$ on a tannakian subcategory of $\LCM$ containing
$\LCM^{L\cdot S}$, we let $\omega|$ denote the restriction of $\omega$ to
$\LCM^{L\cdot S}$.

For $X$ in $\LCM(\mathbb{Q}{}^{\text{al}})^{L\cdot S}$,%
\[
\omega^{R^{L}}(X)\overset{\textup{{\tiny def}}}{=}\Hom_{\LMot}(\1,R^{L}%
X)\simeq\Hom_{\Mot}(\1,IR^{L}(X))\quad
\]
because $R^{L}X$ lies in $\LMot(\mathbb{F})^{L}$ and $I$ defines an
equivalence $\LMot(\mathbb{F})^{L}\rightarrow\Mot(\mathbb{F)}^{P}$ (recall
that both subcategories are canonically tensor equivalent with the category of
$\mathbb{Q}{}$-vector spaces). Similarly,%
\[
\Hom_{\Mot}(\1,IR^{L}(X))=\Hom_{\Mot}(\1,RJ(X))\simeq\Hom_{\mathsf{CM}%
}(\1,J(X))\overset{\textup{{\tiny def}}}{=}\omega^{J}(X).
\]
In fact, $\omega^{R^{L}}(X)=\omega^{J}(X)$ as subspaces of $\omega
_{\mathbb{A}{}}(X)$. Thus, $\omega^{R^{L}}|=\omega^{J}|$ as subfunctors of
$\omega_{\mathbb{A}{}}|$.

We now drop the assumption that $\Mot(\mathbb{F}{})$ exists, and we attempt to
construct it from the rest of the diagram. We want to obtain $\Mot(\mathbb{F}%
{})$ simultaneously as a quotient of $\CM$ and $\LMot$, and for this we need
$\mathbb{Q}{}$-valued fibre functors $\omega^{I}$ on $\LMot^{P}$ and
$\omega^{R}$ on $\CM^{P}$ satisfying a compatibility condition implying that
the two quotients are essentially the same.

Because the sequence%
\[
0\rightarrow S/P\rightarrow T/L\rightarrow T/\left(  S\cdot L\right)
\rightarrow0
\]
is exact (\cite{milne1999lm}, 6.1), the category $\CM^{P}$ is itself the
quotient $\LCM^{L}/\omega_{1}$ of $\LCM^{L}$ by the $\mathbb{Q}{}$-valued
fibre functor on $\LCM^{L\cdot S}$%
\[
\omega_{1}\colon X\rightsquigarrow\Hom_{\CM(\mathbb{Q}^{\text{al}}%
)}(\1,JX)=\omega^{J}(X).
\]
In other words, $\omega_{1}=\omega^{J}|$. According to (\ref{q2}), to give a
fibre functor $\omega^{R}$ on $\CM^{P}$ is the same as to give a fibre functor
$\omega$ on $\LCM(\mathbb{Q}{}^{\text{al}})^{L}$ together with an isomorphism
$\omega^{J}|\rightarrow\omega|$. In order to get a commutative diagram as in
(\ref{e18}), we must take $\omega=\omega^{R^{L}}$, and so we need an
isomorphism $\omega^{J}|\rightarrow\omega^{R^{L}}|$. In order for the standard
fibre functors to factor correctly through the quotient $\mathsf{CM}%
(\mathbb{Q}{}^{\mathrm{al}})/\omega^{R}$ we need this isomorphism to be
compatible with the canonical isomorphism of the functors $\omega
_{\mathbb{A}{}}$, or, with the identification we are making, we need the
isomorphism $\omega^{J}|\rightarrow\omega^{R^{L}}|$ to be an equality of
subfunctors of $\omega_{\mathbb{A}{}}$. In summary, we have shown:

\begin{theorem}
\label{r87}A diagram (\ref{e18}) exists, together with a functors $\omega_{l}$
on $\Mot$ such that $\omega_{l}\circ I=\omega_{l}$ and $\omega_{l}\circ
R=\omega_{l}$ for all $l$ if and only if $\omega^{J}|=\omega^{R^{L}}|$ as
subfunctors of $\omega_{\mathbb{A}{}}$ on $\LCM^{L\cdot S}$.
\end{theorem}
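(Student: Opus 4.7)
The plan is to prove the biconditional by reading the preceding discussion backwards: the forward implication follows from commutativity of the diagram (\ref{e18}), while the converse is obtained by \emph{constructing} $\Mot(\mathbb{F})$ as a simultaneous tannakian quotient of $\mathsf{CM}(\mathbb{Q}^{\mathrm{al}})$ and $\LMot(\mathbb{F})$ using the dictionary (\ref{q1})--(\ref{q2}). The key structural fact, supplied by the exactness of $0\to S/P\to T/L\to T/(S\cdot L)\to 0$, is that $\mathsf{CM}(\mathbb{Q}^{\mathrm{al}})^{P}$ is already presented as the quotient $\LCM(\mathbb{Q}^{\mathrm{al}})^{L}/\omega^{J}|$, so any compatible choice of fibre functor on $\LCM^{L}$ produces $\Mot$.

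For the forward direction, assume the diagram (\ref{e18}) exists with fibre functors $\omega_{l}$ satisfying $\omega_{l}\circ R=\omega_{l}=\omega_{l}\circ I$. Take $X\in\LCM(\mathbb{Q}^{\mathrm{al}})^{L\cdot S}$. By definition $\omega^{R^{L}}(X)=\Hom_{\LMot}(\1,R^{L}X)$ and $\omega^{J}(X)=\Hom_{\mathsf{CM}}(\1,JX)$. Since $R^{L}X$ lies in $\LMot(\mathbb{F})^{L}$ and $JX$ in $\mathsf{CM}(\mathbb{Q}^{\mathrm{al}})^{P}$, the functors $I\colon\LMot(\mathbb{F})^{L}\to\Mot(\mathbb{F})^{P}$ and $R\colon\mathsf{CM}(\mathbb{Q}^{\mathrm{al}})^{P}\to\Mot(\mathbb{F})^{P}$ induce isomorphisms between the respective $\Hom(\1,-)$ groups, and commutativity $I\circ R^{L}=R\circ J$ identifies these $\Hom$ groups with $\Hom_{\Mot}(\1,IR^{L}X)=\Hom_{\Mot}(\1,RJX)$. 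The compatibility of the $\omega_{l}$ with $I$ and $R$ forces both identifications to sit inside $\omega_{\mathbb{A}}(X)$ as the same subspace, giving $\omega^{R^{L}}|=\omega^{J}|$ as subfunctors of $\omega_{\mathbb{A}}$.

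For the converse, assume $\omega^{J}|=\omega^{R^{L}}|$ as subfunctors of $\omega_{\mathbb{A}}$ on $\LCM^{L\cdot S}$. Regard $\mathsf{CM}(\mathbb{Q}^{\mathrm{al}})$ as a tannakian category sitting over its subcategory $\mathsf{CM}^{P}=\LCM^{L}/\omega^{J}|$ (allowed by (\ref{q1}) and the exact sequence recalled above). To construct $\Mot(\mathbb{F})$ it suffices, by (\ref{q2}) applied to the quotient $\mathsf{CM}(\mathbb{Q}^{\mathrm{al}})\to\mathsf{CM}(\mathbb{Q}^{\mathrm{al}})/\omega^{R}$, to specify a $\mathbb{Q}$-valued fibre functor $\omega$ on $\LCM^{L}$ together with an isomorphism $\omega^{J}|\xrightarrow{\sim}\omega|$ on $\LCM^{L\cdot S}$. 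Take $\omega=\omega^{R^{L}}$ and take the isomorphism to be the equality provided by hypothesis. This produces a tannakian quotient which one defines to be $\Mot(\mathbb{F})$, and it simultaneously witnesses $\Mot(\mathbb{F})=\LMot(\mathbb{F})/\omega^{I}$ with $\omega^{I}=\omega^{R^{L}}$ (transferred via $I$), yielding the commutative square in (\ref{e18}). To obtain the required $\omega_{l}$, one invokes (\ref{q2}) a second time: a standard $l$-adic fibre functor on $\mathsf{CM}(\mathbb{Q}^{\mathrm{al}})$ factors through the quotient precisely because the chosen isomorphism $\omega^{J}|\to\omega^{R^{L}}|$ is the identity inside $\omega_{\mathbb{A}}$, so the factorizations through $R$ and through $I$ agree.

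The essential content, and the only potentially subtle point, is the insistence on equality as subfunctors of $\omega_{\mathbb{A}}$ rather than mere abstract isomorphism: an abstract isomorphism $\omega^{J}|\simeq\omega^{R^{L}}|$ would still give some quotient $\Mot$ of $\mathsf{CM}$, but the induced $l$-adic fibre functors on $\Mot$ would not in general coincide with the standard ones obtained via $\LMot$. The role of the rationality hypothesis in Theorem~\ref{r77} is to arrange this equality on the nose, and once it holds the rest is formal tannakian bookkeeping (q2). I expect the bookkeeping step — checking that the quotient constructed from $(\omega^{R^{L}},\textrm{id})$ via (\ref{q2}) really produces a commuting square with $\LMot$ rather than just a square up to isomorphism — to be the main place where one has to be careful, but it reduces to tracing through the definition of the morphisms in a quotient category as spelled out in (\ref{e11}).
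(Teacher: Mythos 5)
Your proposal follows the paper's own argument essentially verbatim: the forward direction is the chain of identifications $\omega^{R^{L}}(X)=\Hom_{\LMot}(\1,R^{L}X)\simeq\Hom_{\Mot}(\1,IR^{L}X)=\Hom_{\Mot}(\1,RJX)\simeq\Hom_{\CM}(\1,JX)=\omega^{J}(X)$ inside $\omega_{\mathbb{A}}(X)$, and the converse is the paper's construction of $\Mot(\mathbb{F})$ as the quotient of $\CM(\mathbb{Q}^{\mathrm{al}})$ by the fibre functor $\omega^{R}$ on $\CM^{P}=\LCM^{L}/\omega^{J}|$ corresponding via (\ref{q2}) to the pair $(\omega^{R^{L}},\text{equality})$, the insistence on equality rather than mere isomorphism being exactly what makes the standard fibre functors factor. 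One small repair: the isomorphism $\Hom_{\CM}(\1,JX)\simeq\Hom_{\Mot}(\1,RJX)$ holds because $JX$ is a \emph{trivial} object of $\CM$ (it lies in $\CM^{S}$, since $X\in\LCM^{L\cdot S}$), not merely because $JX\in\CM^{P}$ as you state --- $R$ restricted to $\CM^{P}$ does not induce isomorphisms on $\Hom(\1,-)$ in general, the discrepancy being precisely the difference between $\omega^{R}$ and $\Hom_{\CM}(\1,-)$.
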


This completes the proof of Theorem \ref{r77}, because \textquotedblleft%
$\omega^{J}|=\omega^{R^{L}}|$ as subfunctors of $\omega_{\mathbb{A}{}}$ on
$\LCM^{L\cdot S}$\textquotedblright\ is a restatement of Conjecture \ref{rc2}
(see Remark \ref{r82}).

\begin{aside}
\label{r40}In the above, we have shown how to define $\Mot(\mathbb{F}{})$ as a
quotient of $\CM(\mathbb{Q}^{\text{al}})$. Similarly, we could have defined it
as a quotient of $\LMot(\mathbb{F})$, but, more symmetrically, we can define
it as a quotient of $\LCM(\mathbb{Q}{}^{\text{al}})$ or of $\CM(\mathbb{Q}%
^{\mathrm{al}})\otimes\LMot(\mathbb{F}{})$.
\end{aside}

\subsection{Ordinary abelian varieties}

Let $\CM^{\prime}(\mathbb{Q}^{\text{al}})$ and $\LCM^{\prime}(\mathbb{Q}%
{}^{\text{al}})$ be the tannakian subcategories generated by CM abelian
varieties over $\mathbb{Q}{}^{\mathrm{al}}$ specializing to simple ordinary
abelian varieties over $\mathbb{F}{}$. Because the rationality conjecture
holds for such abelian varieties (see \ref{r75}), we obtain unconditionally a
good theory of rational Tate classes on ordinary abelian varieties over
$\mathbb{F}{}$. Moreover, we obtain a canonical commutative diagram%
\[
\begin{CD}
\mathsf{CM}^{\prime}(\mathbb{Q}^{\mathrm{al}})@<J<<\LCM^{\prime}(\mathbb{Q}^{\text{al}})\\
@VVRV@VV{R^{\prime}}V\\
\Mot^{\mathrm{ord}}(\mathbb{F})@<I<<\LMot^{\mathrm{ord}}(\mathbb{F})
\end{CD}
\]
in which $\Mot^{\mathrm{ord}}(\mathbb{F}{})$ and $\LMot^{\mathrm{ord}%
}(\mathbb{F}{})$ are generated by the ordinary abelian varieties over
$\mathbb{F}{}$. For each prime $l$, there exists a fibre functor $\omega_{l}$
on $\Mot^{\mathrm{ord}}(\mathbb{F}{})$ such that $\omega_{l}\circ R=\omega
_{l}$ and $\omega_{l}\circ I=\omega_{l}$. In this case, the functors $R$ and
$R^{\prime}$ are tensor equivalences, and so there is a canonical
$\mathbb{Q}{}$-valued fibre functor on $\Mot^{\mathrm{ord}}(\mathbb{F}{}%
)$.\footnote{The mere existence of a $\mathbb{Q}{}$-valued fibre functor on
the category of motives generated by ordinary abelian varieties is not hard to
prove assuming the Tate conjecture. It amounts to showing that the class of
the category in $H^{2}$ is zero, but this follows from the analogue of
statement (*) following 3.15 in \cite{milne1994} for the ordinary version of
$P$. (The proof of (*) is clarified in my preprint Motives over $\mathbb{F}%
{}_{p}$ --- see Proposition 1.1). Moreover, simply by looking at the
cohomology of the groups involved, it is possible to show that there exists a
$\mathbb{Q}{}$-valued fibre functor that becomes isomorphic to the standard
$\ell$-adic fibre functor when tensored with $\mathbb{Q}{}_{\ell}$.} In other
words, as expected, ordinary abelian varieties and their motives in
characteristic $p$ behave very much as their counterparts in characteristic zero.

\bibliographystyle{cbe}
\bibliography{../../refs}

\bsmall\bigskip\noindent Mathematics Department, University of Michigan, Ann
Arbor, MI 48109, USA

\noindent Email: \url{jmilne@umich.edu} 

\noindent Webpage: \url{www.jmilne.org/math/} \esmall

\end{document}